\theoremstyle{plain}
\newtheorem{thm}{Theorem}[section]
\newtheorem{prop}[thm]{Proposition}
\newtheorem{lem}[thm]{Lemma}
\newtheorem*{lem*}{Lemma}
\newtheorem{defn}[thm]{Definition}
\newtheorem*{rem}{Remark}
\newcommand{\BBE}{\mathbb{E}}
\newcommand{\BFP}{\mathbf{P}}
\date{}
\title{\vspace{-0.7cm}Compatible Hamilton cycles in random graphs}
\author{
Michael Krivelevich \thanks{School of Mathematical Sciences, Raymond and Beverly Sackler Faculty of Exact Sciences, Tel Aviv University, Tel Aviv,
6997801, Israel. Email: krivelev@post.tau.ac.il.
Research supported in part by USA-Israel BSF Grant 2010115 and by grant 912/12 from the Israel Science Foundation.}
\and
Choongbum Lee \thanks{Department of Mathematics,
MIT, Cambridge, MA 02139-4307. Email: cb\_lee@math.mit.edu.
Research supported in part by NSF Grant DMS-1362326.}
\and
Benny Sudakov \thanks{Department of Mathematics, ETH, 8092 Zurich, Switzerland.
Email: benjamin.sudakov@math.ethz.ch. Research supported in part by SNSF grant 200021-149111 and by a USA-Israel BSF grant.}
}
\begin{document}

\maketitle

\begin{abstract}
A graph is Hamiltonian if it contains a cycle passing through every vertex.
One of the cornerstone results in the theory of random graphs asserts that for edge probability $p \gg \frac{\log n}{n}$, the random graph $G(n,p)$
is asymptotically almost surely Hamiltonian. We obtain the following strengthening of this result.
Given a graph $G=(V,E)$, an {\em incompatibility system} $\mathcal{F}$ over $G$ is a family $\mathcal{F}=\{F_v\}_{v\in V}$ where for every $v\in V$, the set $F_v$ is a set of unordered pairs $F_v \subseteq \{\{e,e'\}: e\ne e'\in E, e\cap e'=\{v\}\}$.
An incompatibility system is {\em $\Delta$-bounded} if for every vertex $v$
and an edge $e$ incident to $v$, there are at most $\Delta$ pairs in $F_v$
containing $e$.
We say that a cycle $C$ in $G$ is {\em compatible} with $\mathcal{F}$
if every pair of incident edges $e,e'$ of $C$ satisfies $\{e,e'\} \notin F_v$. This notion is partly motivated by
a concept of transition systems defined by Kotzig in 1968, and can be used as a quantitative measure of
robustness of graph properties. We prove that there is a constant $\mu>0$ such that the random graph $G=G(n,p)$ with $p(n) \gg \frac{\log n}{n}$ is asymptotically almost surely such that
for any $\mu np$-bounded incompatibility system $\mathcal{F}$ over $G$, there is
a Hamilton cycle in $G$ compatible with $\mathcal{F}$. We also prove that for larger edge probabilities $p(n)\gg \frac{\log^8n}{n}$, the parameter $\mu$ can be taken to be any constant smaller than $1-\frac{1}{\sqrt 2}$.
These results imply in particular that typically in $G(n,p)$ for $p \gg \frac{\log n}{n}$,
for any edge-coloring in which each color appears at most $\mu np$ times
at each vertex, there exists a properly colored Hamilton cycle.
Furthermore, our proof can be easily modified to show that for any edge-coloring of such a random graph in which
each color appears on at most $\mu np$ edges, there exists a Hamilton cycle in which all edges have distinct colors (i.e., a rainbow Hamilton cycle).
\end{abstract}

\section{Introduction}

A \emph{Hamilton cycle} in a graph $G$ is a cycle passing through each vertex of
$G$, and a graph is \emph{Hamiltonian} if it contains
a Hamilton cycle. Hamiltonicity, named after Sir Rowan Hamilton who studied it in the 1850s, is an important
and extensively studied concept in graph theory. It is well known that deciding Hamiltonicity is an NP-complete
problem and thus one does not expect a simple sufficient and necessary
condition for Hamiltonicity. Hence the study of Hamiltonicity has been concerned with looking for simple sufficient conditions
implying Hamiltonicity. One of the most important results in this direction
is Dirac's theorem asserting that all $n$-vertex graphs, $n\ge 3$, of minimum
degree at least $\frac{n}{2}$ contains a Hamilton cycle. Sufficient
conditions for Hamiltonicity often provide good indication towards
similar results for more general graphs.
For example, P\'osa and Seymour's conjecture on the existence of
powers of Hamilton cycles, and Bollob\'as and Koml\'os's conjecture
on the existence of general spanning subgraphs of small bandwidth, are both (highly non-trivial) generalizations of Dirac's theorem (both conjectures have been settled,
see \cite{KoSaSz98, BoScTa}).

A \emph{binomial random graph} $G(n,p)$ is a probability space of
graphs on $n$ vertices where each pair of vertices form an edge
independently with probability $p$. With some abuse of terminology, we will use $G(n,p)$ to denote both the probability space and a random graph drawn from it. We say that $G(n,p)$
possesses a graph property $\mathcal{P}$ \emph{asymptotically almost surely}
(or \emph{a.a.s.} in short) if the probability that $G(n,p)$ has
$\mathcal{P}$ tends to $1$ as $n$ tends to infinity. Early results
on Hamiltonicity of random graphs were proved by P\'osa \cite{Posa},
and Korshunov \cite{Korshunov}. Improving on these
results, Bollob\'as \cite{Bollobas84}, and Koml\'os
and Szemer\'edi \cite{KoSz} proved that if $p\ge(\log n+\log\log n+\omega(n))/n$
for any function $\omega(n)$ that goes to infinity together with
$n$, then $G(n,p)$ is a.a.s. Hamiltonian. The range of $p$ cannot
be improved, since if $p\le(\log n+\log\log n-\omega(n))/n$, then
$G(n,p)$ a.a.s. has a vertex of degree at most one.
Hamiltonicity of random graphs has been studied in great depth,
and there are many beautiful results on the topic.

\medskip{}

Recently there has been increasing interest in the study of
robustness of graph properties, aiming to strengthen classical results
in extremal and probabilistic combinatorics.
For example, consider the property of being Hamiltonian. By Dirac's theorem, we know that all $n$-vertex graphs
of minimum degree at least $\frac{n}{2}$ (which we refer to as \emph{Dirac
graph}s) are Hamiltonian. To measure the robustness of this theorem, we
can ask questions such as: ``How many Hamilton cycles must a Dirac graph
contain?'', ``What is the critical bias of the Maker-Breaker Hamiltonicity game
played on a Dirac graph?'', or ``When does a random subgraph of a
Dirac graph typically contain a Hamilton cycle?'' (see \cite{CuKa, KrLeSu14}). Note
that an answer to each question above in some sense defines a measure of robustness
of a Dirac graph with respect to Hamiltonicity. Moreover, Dirac's theorem
itself can be considered as measuring robustness  of Hamiltonicity of complete graphs,
where we measure the maximum number of edges one can delete from each
vertex of the complete graph while maintaining Hamiltonicity (see
\cite{SuVu} for further discussion).

\medskip{}

In this paper, we are interested in yet another type of robustness measure, and study
the robustness of Hamiltonicity with respect to this measure.
\begin{defn}
Let $G=(V,E)$ be a graph.

\vspace{-0.3cm}
\begin{itemize}
  \setlength{\itemsep}{0pt} \setlength{\parskip}{0pt}
  \setlength{\parsep}{0pt}
\item[(i)] An {\em incompatibility system} $\mathcal{F}$ over $G$ is a family $\mathcal{F}=\{F_v\}_{v\in V}$ where for every $v\in V$, the set $F_v$ is a set of unordered pairs $F_v \subseteq \{\{e,e'\}: e\ne e'\in E, e\cap e'=\{v\}\}$.

\item[(ii)] If $\{e,e'\} \in F_v$ for some edges $e,e'$ and vertex $v$, then we
say that $e$ and $e'$ are \emph{incompatible} in $\mathcal{F}$.
Otherwise, they are \emph{compatible} in $\mathcal{F}$. A subgraph $H \subseteq G$
is \emph{compatible} in $\mathcal{F}$, if all its pairs of edges $e$ and $e'$ are compatible.
\item[(iii)]  For a positive integer $\Delta$, an incompatibility system $\mathcal{F}$
is {\em $\Delta$-bounded} if for each vertex $v \in V$ and an edge $e$ incident to $v$, there are at most $\Delta$ other edges $e'$ incident to $v$ that are
incompatible with $e$.
\end{itemize}
\end{defn}

The definition is motivated by two concepts in graph theory.
First, it generalizes \emph{transition systems} introduced by Kotzig \cite{Kotzig68}
in 1968, where a transition system is a $1$-bounded
incompatibility system.
Kotzig's work was motivated by a problem of Nash-Williams on cycle covering
of Eulerian graphs
(see, e.g. Section 8.7 of \cite{Bondy95}).

Incompatibility systems and compatible Hamiton cycles also generalize
the concept of properly colored Hamilton cycles in edge-colored graphs,
The problem of finding properly colored Hamilton cycles in edge-colored
graph was first introduced by Daykin \cite{Daykin}. He asked if there
exists a constant $\mu$ such that for large enough $n$,
there exists a properly colored Hamilton cycle in every
edge-coloring of a complete graph $K_{n}$ where each vertex has at
most $\mu n$ edges incident to it of the same color (we refer to
such coloring as a \emph{$\mu n$-bounded edge coloring}).
Daykin's question has been answered
independently by Bollob\'as and Erd\H{o}s \cite{BoEr76} with $\mu=1/69$,
and by Chen and Daykin \cite{ChDa} with $\mu=1/17$. Bollob\'as
and Erd\H{o}s further conjectured that all $(\lfloor \frac{n}{2}\rfloor-1)$-bounded
edge coloring of $K_{n}$ admits a properly colored Hamilton cycle.
After subsequent improvements by Shearer \cite{Shearer} and by Alon and
Gutin \cite{AlGu}, Lo \cite{Lo12} recently settled the conjecture asymptotically,
proving that for any positive $\varepsilon$, every $(\frac{1}{2}-\varepsilon)n$-bounded
edge coloring of $E(K_n)$ admits a properly colored Hamilton cycle.

Note that a $\mu n$-bounded edge coloring naturally defines $\mu n$-bounded
incompatibility systems, and thus the question mentioned above can
be considered as a special case of the problem of finding compatible
Hamilton cycles. However, in general, the restrictions introduced
by incompatibility systems need not come from edge-colorings of graphs,
and thus the results on properly colored Hamilton cycles
do not necessarily generalize easily to incompatibility systems.

\medskip{}

In this paper, we study compatible Hamilton cycles in random graphs.
We present two results.

\begin{thm} \label{thm:sparse_p}
There exists a positive real $\mu$ such
that for $p\gg\frac{\log n}{n}$, the graph $G=G(n,p)$ a.a.s. has
the following property. For every $\mu np$-bounded incompatibility
system defined over $G$, there exists a compatible Hamilton cycle.
\end{thm}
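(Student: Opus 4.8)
The plan is to run the rotation--extension method of P\'osa, together with the absorption of random \emph{booster} edges, in a way that keeps every rotation and every added edge compatible with $\mathcal{F}$ and, crucially, never lets $\mathcal{F}$ enter a union bound. Split the edge probability by two--round exposure, $G(n,p)\sim G_1\cup G_2$ with $G_1\sim G(n,p_1)$, $G_2\sim G(n,p_2)$, $p_1=(1-\varepsilon)p$ and $p_2$ of the same order as $p$ (any split with $p_1,p_2\gg\tfrac{\log n}{n}$ works), and record the following structural properties, all holding a.a.s.\ and none mentioning $\mathcal{F}$: (i) $\tfrac12 np_1\le\delta(G_1)\le\Delta(G_1)\le 2np_1$, valid since $np_1\gg\log n$; (ii) \emph{robust small--set expansion of $G_1$}: for every $G'\subseteq G_1$ obtained by deleting at most $\mu np$ edges at each vertex, every set $S$ with $|S|\le\tfrac n{16}$ satisfies $|N_{G'}(S)\setminus S|\ge 2|S|$, and every two disjoint sets of size $\ge\tfrac n{16}$ are joined by an edge of $G'$; (iii) \emph{spread of $G_2$}: every two vertex sets of size $\ge\tfrac n{16}$ are joined by more than $n$ edges of $G_2$. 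Property (ii) is exactly where the bound $\mu np$ is used: deleting $\mu np$ edges per vertex destroys only an $O(\mu)$--fraction of the $\Omega(|S|\,np_1)$ edges leaving a set $S$ with $|S|\le\tfrac n{16}$, and the union bound over $S$ and over the small sets it might collapse into is beaten by a Chernoff estimate once $\mu$ is a sufficiently small absolute constant; property (iii) survives a union bound over the at most $4^n$ pairs of sets since $n^2p_2\gg n$.

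Fix now any $G_1,G_2$ with properties (i)--(iii) and any $\mu np$--bounded $\mathcal{F}$ over $G=G_1\cup G_2$; from here on the argument is deterministic. Working inside $G_1$ (later inside $G_1$ plus a few extra edges, which only strengthens (i)--(ii)), take a maximum--length $\mathcal{F}$--compatible path $P$. A rotation at an endpoint $a$ of $P$, with path--successor $a'$ and pivot $u\in N(a)$, produces an $\mathcal{F}$--compatible path only if $(1)$ $\{aa',au\}\notin F_a$ and $(2)$ $\{au,uu^+\}\notin F_u$, where $u^+$ is the neighbour of $u$ on $P$ on the far side from $a$. Let $R$ be the set of endpoints reachable from $P$ by sequences of such compatible rotations, with the other endpoint allowed to move as well. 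The aim is to show that unless $P$ can be compatibly extended or compatibly closed into a Hamilton cycle, one has $|R|\ge\tfrac n{16}$ (and, running the same analysis from each $a\in R$, a corresponding set of admissible other--endpoints of size $\ge\tfrac n{16}$). Condition $(1)$ is harmless: it forbids at most $\mu np\ll\delta(G_1)$ pivots at $a$, a deletion of the type covered by (ii), so P\'osa's lemma ($|N_{G'}(R)|<2|R|$ for a maximum path) pushes $|R|$ past $\tfrac n{16}$ just as in the classical argument. The real difficulty --- which I expect to be the heart of the proof --- is condition $(2)$: it restricts a pivot through the \emph{path} edge $uu^+$, so a single path $P$ may have all of its pivots poisoned by $\mathcal{F}$, and ``delete $\mu np$ edges per vertex'' does not account for it. I would handle it as in the analogous result for Dirac graphs, by refusing to commit to a single path: whenever a rotation onto $x_{i-1}$ is blocked at $x_i$ by $(2)$, one first performs auxiliary rotations --- in particular rotations at the \emph{other} endpoint, which alter the local structure of the path around $x_i$ --- so as to reach a maximum--length compatible path on which $x_{i-1}$ is an admissible pivot, and one then bounds the number of endpoints that genuinely cannot be reached this way by $O(\mu np)\cdot|R|$, a loss the robust factor--$2$ expansion of (ii) absorbs once $\mu$ is small. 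Carrying out this bookkeeping, and likewise checking that closing a cycle, or extending a cycle spanning only part of $V$, can always be routed through admissible transitions, is the technical core.

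Granting this, the set of $\mathcal{F}$--compatible boosters of $P$ --- non--edges $f$ such that adding $f$ to the current graph creates an $\mathcal{F}$--compatible path longer than $P$, or an $\mathcal{F}$--compatible Hamilton cycle --- contains, after shrinking $R$ and the other--endpoint sets by a further $O(\mu np)$ to guarantee admissible transitions at both new endpoints (still leaving sets of size $\ge\tfrac n{16}$), all non--edges between two vertex sets of size $\ge\tfrac n{16}$, exactly as in the standard booster analysis. One then iterates: starting from a maximum--length $\mathcal{F}$--compatible path of $G_1$, as long as the current path is not a Hamilton cycle, property (iii) supplies more than $n$ edges of $G_2$ between the two relevant size-$\ge\tfrac n{16}$ sets, and since fewer than $n$ boosters have been used so far at least one such edge is available; being a compatible booster, adding it either lengthens the path or completes an $\mathcal{F}$--compatible Hamilton cycle of $G$. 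The length strictly increases each round, so the process terminates within $n$ rounds, and since we only ever add edges, properties (i)--(ii) --- hence the rotation analysis of the previous paragraph --- remain valid throughout. This yields an $\mathcal{F}$--compatible Hamilton cycle for every $\mu np$--bounded $\mathcal{F}$, as required. The order of quantifiers causes no trouble, since $G_2$ is fixed before $\mathcal{F}$ is chosen and the entire probabilistic content is in properties (i)--(iii), which do not refer to $\mathcal{F}$; the constant $\mu$ is the smaller of the two absolute constants coming from property (ii) and from the rotation analysis.
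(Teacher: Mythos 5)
You have put your finger on exactly the right obstruction — your ``condition $(2)$,'' the compatibility of the new pivot edge $au$ with the pre-existing path edge $uu^+$ — but the step you propose to deal with it is not carried out, and I do not believe it can be carried out along the lines you sketch at densities as low as $p=\Theta(\tfrac{\log n}{n})$. The difficulty is worse than an $O(\mu np)$-per-vertex deletion: for a \emph{fixed} path $P$ and endpoint $a$, each neighbour $u$ of $a$ contributes only the single pair $\{au,uu^+\}$, and a $\mu np$-bounded system is free to include this pair in $F_u$ for \emph{every} $u\in N(a)$. So every pivot at $a$ can be simultaneously blocked, and the robust-expansion property (ii) of $G_1$ (which only handles adversarial deletion of $\le\mu np$ edges per vertex, independent of the path) gives no control. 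Your proposed fix — rotate at the other endpoint to disturb the path near $u$ before pivoting — is indeed how the Dirac-graph analogue of this result is handled, but that argument leans hard on minimum degree $n/2$ (so there are $\Omega(n)$ independent ways to rearrange the path near any vertex), and nothing of the sort is available when vertex degrees are $\Theta(\log n)$. As written, ``bound the number of endpoints that genuinely cannot be reached this way by $O(\mu np)\cdot|R|$'' is an unsupported assertion and is the whole theorem.

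The paper's resolution is different and worth seeing because it removes condition $(2)$ entirely rather than trying to tame it. Instead of rotating inside a robustly-expanding $G_1$ with $\mathcal{F}$ carried along as a side constraint, one first extracts from $G$ a \emph{sparse} subgraph $R$ with $O(n)$ edges that is simultaneously an $(\tfrac n4,2)$-expander \emph{and} compatible with $\mathcal{F}$ as a whole (no incompatible pair among its edges). This is Lemma~\ref{lem:sparse_rotation}; it is proved by having each vertex sample $d=O(1)$ random incident edges and applying the Lov\'asz Local Lemma to rule out both incompatible pairs (events $\mathcal{A}$) and expansion failures (events $\mathcal{B}_t,\mathcal{C}_t$). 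Once such an $R$ exists, one runs P\'osa rotations in $P\cup R$ where $P$ is a longest path chosen so that $P\cup R$ is compatible; every rotated path is a subgraph of $P\cup R$, hence automatically compatible, so your condition $(2)$ never arises. Lemma~\ref{lem:posa_rot_ext} then gives $\Omega(n^2)$ boosters in $K_n$, hence $\Omega(n^2p)$ boosters in $G$ by Lemma~\ref{lem:sparse_extension} (a union bound over the at most $e^{O(n\log n)}$ pairs $(P,R)$ with $|E(R)|=O(n)$, which needs $p\gg\tfrac{\log n}{n}$ precisely to beat the enumeration of $R$). Of these, only $O((d+1)n\cdot\mu np)$ can be forbidden by incompatibility with an edge of $P\cup R$, so for $\mu$ small enough some compatible booster survives, and the usual maximality argument closes a Hamilton cycle. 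Your overall scaffolding (booster iteration, enumeration bounded by $p\gg\tfrac{\log n}{n}$) is consistent with this, but without the local-lemma construction of a globally compatible expander you do not have a mechanism for the rotations themselves.
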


Our result can be seen as an answer to a generalized version of
Daykin's question. In fact, we generalize it in two directions.
First, we replace properly colored Hamilton cycles by compatible
Hamilton cycles, and second, we replace the complete graph by
random graphs $G(n,p)$ for $p \gg \frac{\log n}{n}$ (note that
for $p=1$, the graph $G(n,1)$ is $K_n$ with probability $1$).
Since $G(n,p)$ a.a.s.~has no Hamilton cycles for $p \ll \frac{\log n}{n}$,
we can conclude that $\frac{\log n}{n}$ is a ``threshold function'' for having
such constant $\mu$.
The constant $\mu$ we obtain in Theorem \ref{thm:sparse_p}
is very small, and our second result improves this
constant for denser random graphs.

\begin{thm} \label{thm:dense_p}
For all positive reals $\varepsilon$, if $p\gg \frac{\log^8 n}{n}$,
then the graph $G=G(n,p)$ a.a.s.~has the following property. For
every $\Big(1-\frac{1}{\sqrt{2}}-\varepsilon\Big)np$-bounded incompatibility
system defined over $G$, there exists a compatible Hamilton cycle.
\end{thm}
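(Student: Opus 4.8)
The plan is to separate the argument into a probabilistic part and a deterministic part. For the probabilistic part I would fix a short list of pseudorandom properties and show that $G=G(n,p)$ satisfies all of them a.a.s.\ when $p\gg\frac{\log^8 n}{n}$: (P1) every vertex has degree $(1+o(1))np$; (P2) strong small‑set expansion, say $|N_G(S)\setminus S|\ge(\log^2 n)|S|$ for every $S$ with $|S|\le n/\log^2 n$; (P3) a uniform edge‑distribution estimate, for instance $e_G(A,B)\ge(1-o(1))|A||B|p$ whenever $|A|,|B|\ge n/\log^3 n$, and indeed every vertex having $(1-o(1))|W|p$ neighbours in every such $W$; and (P4) a ``spreading'' bound saying a neighbourhood cannot be abnormally concentrated on a small set, which will be used to show that the adversary cannot concentrate its forbidden pairs. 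Each of these is a standard Chernoff‑and‑union‑bound computation at this density, and the exponent $8$ is essentially what is needed to make the union bounds in (P3)--(P4), over all relevant vertex sets, converge; the slack $\varepsilon$ gets absorbed against the various $o(1)$ error terms. (Alternatively one may expose $G(n,p)$ in two rounds and reserve a sparse random graph for the final step; I will indicate below where that would help.)

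For the deterministic part I would prove: any graph $G$ satisfying (P1)--(P4), together with any $(1-\frac1{\sqrt2}-\varepsilon)np$‑bounded incompatibility system $\mathcal F$, admits a compatible Hamilton cycle. The tool is a \emph{compatible} version of P\'osa's rotation--extension method. Write $\mu=1-\frac1{\sqrt2}-\varepsilon$ and $\Delta=\mu np$. Start from a longest compatible path $P=x_0\cdots x_\ell$. A single \emph{compatible rotation} at the free endpoint $w$ with last edge $wu$, through a neighbour $y$ of $w$ at an interior position and deleting the path edge $yy'$ (with $y'$ the path‑neighbour of $y$ towards $x_0$), produces a new compatible path with a new free endpoint exactly when (a) $\{wu,wy\}\notin F_w$ and (b) $\{yy',yw\}\notin F_y$. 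The first step is to show $P$ is spanning: fixing $x_0$ and letting $R$ be the set of vertices reachable as free endpoints by sequences of compatible rotations, the rotation analysis below shows $R$ keeps growing; by (P2) it soon exceeds $n/\log^2 n$ and then by (P3) it is forced to linear size, while at each endpoint at most $\Delta$ edges are forbidden, so an uncovered vertex with $(1-o(1))np$ neighbours in the current endpoint set would give a compatible extension — a contradiction (standard absorbing‑type bookkeeping mops up the last $o(n)$ vertices if $V\setminus V(P)$ is sublinear). The second step is to close the spanning path: rotate both ends to obtain large reachable endpoint sets $R,R'$ and locate a chord joining $R$ to $R'$ that is compatible with the path at both of its ends.

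The heart of the matter, and where the constant $1-\frac1{\sqrt2}$ is forced, is the accounting of the two compatibility constraints (a) and (b). Constraint (a) forbids at most $\Delta$ of the $(1+o(1))np$ candidates from the point of view of $w$, i.e.\ a proportion $\le\mu+o(1)$. Constraint (b) is the genuinely delicate one: a priori $\mathcal F$ could be rigged so that the endpoint $w$ is a forbidden partner at a great many of its neighbours $y$, killing almost all rotations at $w$. What rescues the argument is that (a) concerns edges at $w$ whereas (b) concerns edges at the \emph{distinct} vertices $y$, so after invoking (P4) (to rule out the adversary concentrating its forbidden pairs around any one vertex) together with the global budget $\sum_v |F_v|\le\Delta\cdot e(G)$ on the adversary, the two constraints behave essentially independently, and the surviving proportion of candidate new endpoints is $(1-\mu)^2-o(1)$ rather than the na\"ive $1-2\mu$. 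Since $\mu<1-\frac1{\sqrt2}$ is precisely $(1-\mu)^2>\tfrac12$, the reachable set grows at more than the rate needed for the P\'osa expansion in step one and for the chord‑counting in step two to go through. I expect this decorrelation estimate for constraint (b) — making rigorous the sense in which the pivot‑side forbidden pairs cannot be concentrated, using the edge‑distribution and spreading properties available at density $p\gg\frac{\log^8 n}{n}$ — to be the main obstacle; everything else (verifying (P1)--(P4), the mechanics of compatible rotations, the closing step given large reachable sets) is by now routine. Finally, the properly‑coloured and rainbow consequences need no new work: a $\mu np$‑bounded edge‑colouring induces a $\mu np$‑bounded incompatibility system by declaring two incident edges incompatible iff they share a colour, so Theorem~\ref{thm:dense_p} applies directly, and the rainbow version follows by running the same rotation--extension while also avoiding the already‑used colours and using that each colour spans only $\mu np$ edges.
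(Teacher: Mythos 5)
Your proposal identifies the right source of the constant $1-\frac{1}{\sqrt2}$ (the equation $\mu + (1-\mu)\mu = \frac12$, i.e.\ $(1-\mu)^2 = \frac12$), but your route to it is genuinely different from the paper's and, more importantly, has a real gap exactly at the step you yourself flag as ``the main obstacle.''

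The paper does not run rotation--extension directly. Instead it first draws a random nearly-uniform perfect matching $M$ across a random bisection $A\cup B$ (built by a nibble process), and then lifts the problem to the auxiliary directed graph $D_G(M)$ whose vertices are the matching edges. A directed arc $(e_i,e_j)$ of $D_G(M)$ is deleted precisely when $\{b_i,a_j\}$ is incompatible with the matching edge at $b_i$ or at $a_j$ — the exact analogues of your constraints (a) and (b). The crucial point is that the ``other-side'' edge in constraint (b) is now a \emph{fixed random matching edge}, chosen by a process the adversary cannot see; so the decorrelation you need comes for free from the randomness of $M$ (that is the content of the normality Property (v) and Lemma \ref{lem:chosen_edge_control}). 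After the deletions, $D_G(M;\mathcal F)$ still has min in/out-degree $\ge(\frac12+\varepsilon)\cdot\frac{np}{2}$, and the paper then cites the resilience theorem of Ferber--Nenadov--Noever--Peter--Skoric for $D(n,p)$ — which is where $\log^8 n/n$ comes from, not from union bounds over vertex sets as your sketch asserts.

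In your sketch, the edge $yy'$ in constraint (b) is a \emph{path} edge, and paths evolve during the rotation process. You suggest that (P4) plus a ``global budget'' on $\mathcal F$ forces (a) and (b) to behave independently, but (P4) is a statement about the distribution of edges of $G$, not about $\mathcal F$, and the incompatibility system has no global budget of the kind you invoke: the definition of $\Delta$-bounded constrains only, at each vertex $v$ and each edge $e\ni v$, the number of partners of $e$ in $F_v$. Per edge $yw$, the adversary may place $\{yw,\,yy'\}$ in $F_y$ for up to $\mu np$ choices of $y'$ at every vertex $y$, and there is nothing in (P1)--(P4) that prevents these choices from being coordinated with the evolving path structure. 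So the claim that the surviving proportion of pivots is $(1-\mu)^2-o(1)$ is not established; this is precisely the adversarial-coordination problem that the paper's random-matching construction is designed to break. If you wanted to pursue the rotation route honestly you would have to inject a source of randomness that the adversary cannot coordinate against — effectively re-deriving the random-matching trick — and you would also be reproving, essentially from scratch via rotations, a directed Hamiltonicity resilience statement that the paper simply imports. The ``absorbing-type bookkeeping'' you wave at in the spanning-path step is also not routine once compatibility constraints are in play, for the same coordination reason. As written, the proposal is a plausible heuristic with the correct constant but not a proof.
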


In an edge-colored graph, we say that a subgraph is {\em rainbow} if all its edges have distinct colors. There is a vast literature on the branch of Ramsey theory where one seeks rainbow subgraphs in edge-colored 
graphs. Note that one can easily avoid rainbow copies by using a single color for all edges, and hence in order to find a rainbow subgraph one usually imposes some restrictions
on the distribution of colors. In this context, 
Erd\H{o}s, Simonovits and S\'os \cite{ErSiSo} and Rado \cite{Rado} developed anti-Ramsey theory where one attempts to determine the maximum number of colors that can be used to color the edges of the complete graph 
without creating a rainbow copy of a fixed graph. In a different direction, one can try to find a rainbow copy of a target graph by imposing global conditions on the coloring of the host graph.
For a real $\Delta$, we say that an edge-coloring of $G$ is {\em globally $\Delta$-bounded} if each color appears at most $\Delta$ times on the edges of $G$. 
In 1982, Erd\H{o}s, Ne\v set\v ril and R\"odl \cite{ENR} initiated 
the study of the problem of finding rainbow subgraphs in a globally $\Delta$-bounded coloring of graphs. One very natural question of this type is to find sufficient conditions for the existence of a rainbow 
Hamilton cycle in any globally $\Delta$-bounded coloring. 
Substantially improving on an earlier result of Hahn and Thomassen \cite{HaTh}, Albert, Frieze and Reed \cite{AlFrRe} proved the existence of a constant $\mu>0$ for which every globally $\mu n$-bounded coloring of 
$K_n$ (for large enough $n$) admits a rainbow Hamilton cycle. In fact, they proved a stronger statement 
asserting that for all graphs $\Gamma$ with vertex set $E(K_n)$ (the edge set of the complete graph) 
and maximum degree at most $\mu n$, there exists a Hamilton cycle in $K_n$ which is also an independent set in $\Gamma$.

It turns out that the proof technique used in proving Theorem \ref{thm:sparse_p} can be easily modified to give the following result, that extends the above to random graphs.

\begin{thm} \label{thm:subramsey}
There exists a constant $\mu > 0$ such that for $p \gg \frac{\log n}{n}$, 
the random graph $G=G(n,p)$ a.a.s.~has the following property.
Every globally $\mu np$-bounded coloring of $G$ contains a rainbow Hamilton cycle.
\end{thm}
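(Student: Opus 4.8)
The plan is to adapt the proof of Theorem~\ref{thm:sparse_p} almost verbatim, replacing ``compatible'' by ``rainbow'' throughout. The starting observation is that a globally $\mu np$-bounded coloring is in particular $\mu np$-bounded in the per-vertex sense: each color occurs on at most $\mu np$ edges, hence on at most $\mu np$ edges at any single vertex. Thus the relation ``$e$ and $e'$ are incident and receive the same color'' defines a $\mu np$-bounded incompatibility system over $G$, and the proof of Theorem~\ref{thm:sparse_p} already shows how to route a Hamilton cycle around such \emph{local} conflicts. What is new is that being rainbow is a \emph{global} constraint --- no two edges of the cycle, incident or not, may share a color --- so the argument must additionally control non-incident same-color pairs. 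The single extra structural fact about colorings that I will use is sparsity of palettes: if $S$ is any set of at most $n$ colors, then the number of edges of $G$ whose color lies in $S$ is at most $|S|\cdot\mu np\le\mu n^2p$, i.e.\ they span a sparse subgraph of $G$.

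The pseudorandom properties of $G=G(n,p)$ with $p\gg\frac{\log n}{n}$ used in the proof of Theorem~\ref{thm:sparse_p} --- expansion of small sets, connectivity and expansion that survive the deletion of any sufficiently sparse subgraph, all degrees being $(1\pm o(1))np$, and the estimate that any two disjoint vertex sets $A,B$ with $|A|,|B|\ge\gamma n$ span $\Omega(\gamma^2 n^2p)$ edges --- hold a.a.s.\ and will be invoked exactly as there. Combined with palette-sparsity they yield the crucial ``color-safe closing'' statement: for disjoint $A,B$ with $|A|,|B|\ge\gamma n$ and any set $S$ of at most $n$ forbidden colors, provided $\mu$ is a small enough absolute constant (depending only on the constant $\gamma$ coming from the rotation lemma), the number of $G$-edges between $A$ and $B$ with color outside $S$ is still $\Omega(\gamma^2 n^2p)>0$; the same reasoning produces short connecting paths for the extension step whose constantly many edges all avoid $S$.

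With these tools the rotation--extension argument runs as in Theorem~\ref{thm:sparse_p}: take a \emph{rainbow} path $P$ of maximum length, perform P\'osa rotations that keep $P$ rainbow, and once the reachable endpoint sets have grown to size $\gamma n$ on both sides, either close $P$ into a cycle through a color-safe chord or, if that cycle is not spanning, use a color-safe connecting path supplied by the connectivity of $G$ to produce a strictly longer rainbow path, contradicting maximality; in the first case we are done. The low-degree vertices present when $p$ is near $\frac{\log n}{n}$ are handled exactly as in the proof of Theorem~\ref{thm:sparse_p}, by forcing their two incident cycle-edges first and then working in the expander spanned by the remaining vertices; each forced edge adds only one color to the used set $S$, and $|S|$ never exceeds $n$, so the palette-sparsity bound is untouched. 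The rainbow Hamilton path result of Albert--Frieze--Reed for $K_n$ is recovered as the special case $p=1$.

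I expect the only genuinely delicate point to be keeping the rotations rainbow-preserving. A rotation at an endpoint $v$ through a path-neighbour $x$ replaces the path-edge at $x$ by $vx$, and is legal only if $vx$ repeats neither an incident color at $v$ or at $x$ (at most $O(\mu np)$ excluded neighbours, as in Theorem~\ref{thm:sparse_p}) nor any of the up-to-$n$ colors already on $P$. Unlike in the incompatibility-system setting, this last constraint is \emph{not} bounded per vertex --- the adversary can in principle make every edge at a fixed vertex repeat a path color --- so one cannot argue locally that most rotations at $v$ survive. Instead one argues globally: the edges blocked by the used-color set $S$ form a subgraph with at most $\mu n^2p$ edges, which (by the degree bound) isolates at most $O(\mu n)\ll\gamma n$ vertices and, for $\mu$ small, is too sparse to spoil the robust expansion of the ``legal-rotation'' auxiliary digraph on which the rotation lemma operates. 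Checking that the endpoint sets produced by the constrained rotation process still reach size $\gamma n$ after removing this sparse blocked subgraph is where the bulk of the (routine but careful) work lies; everything else is a transcription of the argument for Theorem~\ref{thm:sparse_p}.
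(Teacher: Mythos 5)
Your proposal correctly identifies the two crucial observations (a globally $\mu np$-bounded coloring induces a $\mu np$-bounded incompatibility system, and any set of colors appearing on a bounded-length structure blocks only a sparse subgraph), but it then diverges from the paper's argument and runs into a difficulty that the paper's own proof never encounters.

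Re-read the proof of Theorem~\ref{thm:sparse_p}: it does \emph{not} perform constrained (compatibility-preserving, or in your setting rainbow-preserving) P\'osa rotations in $G$. It first produces, via Lemma~\ref{lem:sparse_rotation}, a sparse compatible expander $R$, and then chooses $P$ maximal subject to $P\cup R$ being compatible. All rotations needed to generate boosters (Lemma~\ref{lem:posa_rot_ext}) take place \emph{inside} $P\cup R$, and since $P\cup R$ is already rainbow, every path obtained by such a rotation is automatically rainbow --- there is no per-rotation color check at all. The single edge of $G$ outside $P\cup R$ whose color must be vetted is the booster $e$, and your palette-sparsity count is exactly the right one there: the edges forbidden number at most $|E(P\cup R)|\cdot\mu np\le(d+1)\mu n^2 p<\tfrac{1}{64}n^2p$ (even slightly better than the $2\mu np$-per-edge bound used in the compatibility-system case). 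Consequently, the entire discussion in your last paragraph --- constrained rotations blocked by up to $n$ path colors, the ``legal-rotation'' auxiliary digraph, its robust expansion after deleting a sparse blocked subgraph --- is addressing an obstruction the actual proof never creates, and as written it is a moving target: the set of blocked edges depends on the current color multiset of $P$ and changes with each rotation, so it is not a single fixed sparse subgraph that one removes once. Your remark about ``low-degree vertices present when $p$ is near $\log n/n$'' is likewise off: in the regime $p\gg\frac{\log n}{n}$ every degree is a.a.s.\ $(1+o(1))np\to\infty$, and the proof of Theorem~\ref{thm:sparse_p} treats all vertices uniformly via $R$.

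The genuine work is upstream, in Lemma~\ref{lem:sparse_rotation}. There one must redefine the bad events: $A(e_1,e_2)$ now ranges over \emph{all} pairs of like-colored edges, including disjoint ones, so $e_1\cup e_2$ may contain four vertices rather than three. This changes the degree estimates in the dependency graph --- e.g.\ the degree of $A(e_1,e_2)$ inside $\mathcal A$ becomes at most $2\cdot 4\cdot(1+o(1))np\cdot\mu np=(1+o(1))8\mu(np)^2$ in place of $6\mu(np)^2$, since a like-colored partner of an edge incident to $v$ need not itself touch $v$ --- but this is only a constant-factor change that the local-lemma parameters absorb, and analogous minor corrections apply to the $\mathcal B_t$ and $\mathcal C_t$ degrees. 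The probability bounds for the events are unchanged. With these adjustments Lemma~\ref{lem:sparse_rotation} yields a sparse rainbow $(\tfrac n4,2)$-expander, after which the proof of Theorem~\ref{thm:sparse_p} applies verbatim.
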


Theorem \ref{thm:subramsey} is best possible up to the constant $\mu$ since one can
forbid all rainbow Hamilton cycles in a globally $(1+o(1))np$-bounded coloring by simply
coloring all edges incident to some fixed vertex with the same color.

\medskip

The proof of the three theorems will be given in the following sections.
In Section \ref{sec:first}, we prove Theorems \ref{thm:sparse_p} and \ref{thm:subramsey}.
Then in Section \ref{sec:second}, we prove Theorem \ref{thm:dense_p}.

\medskip{}

\noindent \textbf{Notation.} A graph $G=(V,E)$ is given
by a pair of its vertex set $V=V(G)$ and edge set $E=E(G)$.
For a set $X$, let $N(X)$ be the set of vertices incident
to some vertex in $X$. For a pair of disjoint vertex sets $X$ and $Y$, let
$E(X,Y) = \{ (x,y) \,|\, x \in X, y \in Y, \, \{x, y\} \in E\}$,
and define $e(X,Y) = |E(X,Y)|$. We define the {\em length} of a path 
as its number of edges.
When there are several graphs under consideration, to avoid
ambiguity, we use subscripts such as $N_{G}(X)$ to indicate the
graph that we are currently interested in.

Throughout the paper, we tacitly assume
that the number of vertices $n$ of the graph is large enough
whenever necessary. We
also omit floor and ceiling signs whenever they are not crucial.
All logarithms are natural.

\section{Proof of Theorems~\ref{thm:sparse_p} and \ref{thm:subramsey}} \label{sec:first}

To prove Theorem \ref{thm:sparse_p}, we find a compatible
Hamilton cycle by first finding a compatible subgraph
that is also a good expander graph.

\begin{defn}
For positive reals $k$ and $r$,
a graph $R$ is a $(k,r)$-expander if all sets $X\subseteq V(R)$ of
size at most $|X|\le k$ satisfy $|N(X)\setminus X| \ge r|X|$.
\end{defn}

Once we find an expander subgraph, we construct a Hamilton cycle by using
P\'osa's rotation-extension technique, which is a powerful tool
exploiting the expansion property of the graph.
The following definition captures the key concept that we will utilize.

\begin{defn}
Given a graph $R$ and a path $P$ defined over the same vertex set,
we say that an edge $\{v,w\}$ is a \emph{booster} for the pair $(P,R)$
if there exists a path of length $|P|-1$ in the graph $P\cup R$
whose two endpoints are $v$ and $w$.
\end{defn}

The following lemma is a well-known tool that is central to
many applications of the P\'osa's rotation-extension technique
(see, e.g., Lemma 8.5 of \cite{Bollobas01}).

\begin{lem} \label{lem:posa_rot_ext}
Suppose that $R \subseteq K_n$ is a $(k,2)$-expander and $P \subseteq K_n$
is a path that is of maximum length in the graph $P \cup R$. Then $K_n$ contains
at least $\frac{(k+1)^2}{2}$ boosters for the pair $(P,R)$.
\end{lem}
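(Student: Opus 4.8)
The plan is to prove Lemma~\ref{lem:posa_rot_ext} via P\'osa's rotation-extension technique. Let $P = v_0 v_1 \cdots v_\ell$ be a path of maximum length in $P \cup R$. Since $P$ is longest in $P \cup R$, every neighbor of $v_\ell$ in $R$ already lies on $P$ (otherwise we could extend $P$). First I would perform rotations keeping the endpoint $v_0$ fixed: if $\{v_\ell, v_i\} \in E(R)$, then $P' = v_0 \cdots v_i v_\ell v_{\ell-1} \cdots v_{i+1}$ is another path of the same length with the same initial endpoint $v_0$, so $v_{i+1}$ becomes a new endpoint. Let $S$ be the set of all vertices obtainable as the ``moving'' endpoint through a sequence of such rotations (the rotation endpoint set). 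The standard P\'osa argument — using that every such path has maximum length so its moving endpoint has all $R$-neighbors on the path, and that the ``bad'' vertices adjacent to $S$ along $P$ (the neighbors on $P$ of the rotation pivots) number at most $2|S|$ — shows that $N_R(S) \setminus S$ is contained in this set of at most $2|S|$ bad vertices, hence $|N_R(S) \setminus S| \le 2|S|$. Because $R$ is a $(k,2)$-expander, this forces $|S| > k$, i.e. $|S| \ge k+1$.

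Next I would exploit the fact that rotations can be iterated from both ends. For each $w \in S$, fix a longest path $P_w$ in $P \cup R$ from $v_0$ to $w$. Now apply the same rotation analysis to $P_w$ but fixing the endpoint $w$ and rotating the other end; let $S_w$ be the resulting endpoint set, which again satisfies $|S_w| \ge k+1$ by the expander property (every $P_w$ has maximum length, so the hypothesis of the rotation lemma still applies to it). For every $u \in S_w$ there is a path of length $\ell$ from $w$ to $u$ in $P \cup R$. Therefore, for every pair $(w,u)$ with $w \in S$, $u \in S_w$, the edge $\{w,u\}$ is a booster for $(P,R)$: adding it to $P \cup R$ creates a path of length $\ell = |P|$, hence a path of length $|P|-1$ exists as well (take a cycle of length $|P|+1$ if $\{w,u\}$ closes one, or simply note the length-$\ell$ path plus the new edge; more directly, the length-$(|P|-1)$ path is obtained by deleting one end-edge of the rotated path of length $|P|$). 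Actually the cleanest phrasing: the path of length $\ell$ from $w$ to $u$ in $P\cup R$, together with the edge $\{w,u\}$, is irrelevant — what we want is that there exists a path of length $\ell - 1 = |P| - 1$ with endpoints $w$ and $u$ in $P \cup R \subseteq (P\cup R) \cup \{w,u\}$; this follows by deleting the last edge of a length-$\ell$ path from $w$, whose final vertex can be rerouted, or by the standard observation that rotations of a longest path also produce paths one shorter ending at the designated pair. I would invoke the well-known version of this statement rather than re-deriving it.

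Finally I would count. The set of boosters contains all pairs $\{w, u\}$ with $w \in S$ and $u \in S_w$. A cleaner symmetric way to get a clean bound: run rotations from $v_0$ to get $|S| \ge k+1$, and for a \emph{single} fixed $w \in S$ run rotations from the other side to get $|S_w| \ge k+1$; the edges $\{w, u\}$ for $u \in S_w$ are $k+1$ boosters, but we need $\frac{(k+1)^2}{2}$. So instead I would use: for each of the $\ge k+1$ choices of $w \in S$ and each of the $\ge k+1$ choices of $u \in S_w$, the unordered pair $\{w,u\}$ is a booster; each unordered pair is counted at most twice (once as $(w,u)$, once as $(u,w)$), giving at least $\frac{(k+1)^2}{2}$ distinct booster edges. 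The main obstacle, and the step deserving the most care, is verifying that the rotation endpoint set $S$ (and each $S_w$) genuinely satisfies $|N_R(S)\setminus S| \le 2|S|$ so that the $(k,2)$-expansion can be applied: this requires the standard but slightly delicate P\'osa lemma that the $R$-neighbors of $S$ outside $S$ all lie among the $\le 2|S|$ vertices immediately preceding or following an element of $S$ on $P$, together with the maximality of $P$ ensuring no rotation ever produces a longer path. The rest is bookkeeping.
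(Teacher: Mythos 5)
The paper does not prove this lemma itself; it cites it as a well-known consequence of P\'osa's rotation--extension technique (Lemma~8.5 of Bollob\'as's \emph{Random Graphs}). Your proposal reconstructs essentially that standard argument: rotate fixing one endpoint to build an endpoint set $S$ with $|S|\ge k+1$, then for each $w\in S$ rotate from the other side to build $S_w$ with $|S_w|\ge k+1$, and count the $\frac{(k+1)^2}{2}$ resulting unordered pairs. The structure is the right one.

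Two points should be tightened. The middle paragraph, where you argue that $\{w,u\}$ is a booster, has the lengths scrambled because you are reading $|P|$ as the number of \emph{edges}. In this paper $|P|$ is the number of \emph{vertices} of $P$ --- that reading is what makes ``cycle of length $|P|$'' in the proof of Theorem~\ref{thm:sparse_p} consistent (adding the booster edge closes a cycle on all $|P|$ vertices of the path). With that convention the rotated path from $w$ to $u$ has $|P|$ vertices, hence exactly $|P|-1$ edges, and it is \emph{directly} the witness that $\{w,u\}$ is a booster; no ``deleting one end-edge'' is needed, and in fact deleting an end-edge would change an endpoint, so the shorter path would no longer join $w$ to $u$. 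Second, to conclude $|S|>k$ from the $(k,2)$-expansion of $R$ you need a \emph{strict} inequality $|N_R(S)\setminus S|<2|S|$; the bound $\le 2|S|$ you state is consistent with equality and yields no contradiction against expansion. The strict inequality does hold --- $v_\ell\in S$ contributes only one $P$-neighbour since it is an endpoint of $P$, so the set of $P$-neighbours of $S$ has size at most $2|S|-1$ --- but this must be said. Finally, rotations should be taken in $P\cup R$ rather than in $R$ alone; this costs nothing since $N_R(S)\setminus S\subseteq N_{P\cup R}(S)\setminus S$, but the P\'osa containment is most cleanly stated for the graph in which $P$ is a longest path.
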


\subsection{Proof of Theorem \ref{thm:sparse_p}}

In this subsection, we state our main lemmas without proof and prove
Theorem \ref{thm:sparse_p} using these lemmas. The proofs of the
lemmas will be given in the next subsection.

\begin{lem} \label{lem:sparse_rotation}
There exist positive constants $\mu$ and $d$ such that 
if $p\gg\frac{\log n}{n}$, then $G=G(n,p)$
a.a.s. has the following property. For every $\mu np$-bounded
incompatibility system $\mathcal{F}$ over $G$, there exists a subgraph
$R\subseteq G$ with the following properties:
\vspace{-0.3cm}
\begin{itemize}
  \setlength{\itemsep}{1pt} \setlength{\parskip}{0pt}
  \setlength{\parsep}{0pt}
\item[(i)] $R$ is compatible with $\mathcal{F}$,
\item[(ii)] $R$ is an $(\frac{n}{4},2)$-expander, and
\item[(iii)] $|E(R)| \le d n$.
\end{itemize}
\end{lem}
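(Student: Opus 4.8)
The plan is to decouple the randomness from the construction. I would first record the pseudorandom features of $G=G(n,p)$ that hold a.a.s.\ for $p\gg\frac{\log n}{n}$: (i) every vertex has degree $(1+o(1))np$; (ii) a one-sided expansion estimate — for all $X$ with $1\le|X|\le n/4$ and all $Y$ disjoint from $X$ with $|Y|\le 2|X|-1$, one has $e_G\bigl(X,\,V\setminus(X\cup Y)\bigr)\ge\tfrac15|X|np$ (the third set has size $>n/4$, so this is genuine expansion); and (iii) a routine ``no small dense subgraph'' bound. Property (ii) is the workhorse and follows from a Chernoff bound and a union bound: for $|X|=t$ there are at most $n^{3t}=e^{3t\log n}$ pairs $(X,Y)$, each failing with probability $e^{-\Omega(tnp)}$, and $np\gg\log n$ beats the union bound. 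After this reduction it suffices to prove the \emph{deterministic} statement: every $n$-vertex graph satisfying (i)--(iii) is such that for every $\mu np$-bounded incompatibility system $\mathcal F$ there is a subgraph $R$ satisfying (i)--(iii) of the lemma; here $\mu$ is a small absolute constant fixed at the end, and no union bound over the enormous family of incompatibility systems is attempted.

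For the deterministic part I would build $R$ as a union of $D$ compatible perfect matchings of $G$, $D$ a large absolute constant. Take $M_1$ to be any perfect matching of $G$ — which exists since $p\gg\frac{\log n}{n}$, and which is automatically compatible since matchings have no two incident edges. Having chosen pairwise-compatible matchings $M_1,\dots,M_i$, let $G_i$ be the subgraph of $G$ consisting of the edges $e=\{u,v\}$ that, at $u$, are compatible with all of $M_1(u),\dots,M_i(u)$ and, at $v$, are compatible with all of $M_1(v),\dots,M_i(v)$. Since at each vertex one deletes at most $\mu np$ edges per previously selected matching edge there, one expects $G_i$ to have minimum degree at least $\bigl(1-O(D\mu)-o(1)\bigr)np\ge(\tfrac12+\varepsilon)np$ for $\mu$ small; then by the local resilience of perfect matchings in $G(n,p)$ (a.a.s.\ every spanning subgraph of such minimum degree has one) $G_i$ contains a perfect matching $M_{i+1}$, compatible with $M_1,\dots,M_i$ by construction, and we iterate to obtain $R:=M_1\cup\cdots\cup M_D$. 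Then $R$ is compatible and $|E(R)|\le Dn/2$, which gives parts (i) and (iii) of the lemma with $d=D$.

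The remaining, and main, obstacle is part (ii): forcing $R$ to be an $(\tfrac n4,2)$-expander, since a union of $D$ adversarially chosen matchings need not expand. The natural fix is to take each $M_{i+1}$ to be a \emph{random} perfect matching of $G_i$ and to show the union is a.a.s.\ a $(\tfrac n4,2)$-expander: via property (ii) of $G$ and a union bound over the candidate bad pairs $(X,Y)$, this reduces to bounding, for fixed $X$ of size $t$, the probability that all $D$ matchings send $X$ into a common set of size $<3t$; a single random matching of a graph of minimum degree $\Omega(np)$ does this with probability at most $(O(t)/n)^{\Omega(t)}$, and with $D$ a large enough constant this defeats the $\binom{n}{t}\binom{n}{2t}$ choices for all $t\in[1,n/4]$ — small $t$ causing no trouble since every vertex already has $R$-degree $D\ge2$, and large $t$ being controlled since the entropy $\binom{n}{t}$ is then small. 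The real work is in making the ``random perfect matching of $G_i$'' analysis rigorous (the $M_i$ are not independent, $G_i$ is only pseudorandom-ish), and in verifying that this random choice keeps the minimum-degree bounds on every $G_i$ intact (the bound above is only on average over vertices, so the matchings must also be chosen to keep the incompatibility-induced deletions spread out). An alternative that avoids the matching analysis is to omit expansion in the matching phase and then run an iterative repair — repeatedly finding a set $X$ with $|N_R(X)\setminus X|<2|X|$ and adding one compatible $G$-edge from $X$ outward, which exists by (ii) together with the $\mu np$-boundedness of $\mathcal F$ — but there one must show the process never gets stuck and adds only $O(n)$ edges, which is itself delicate when the deficient set $X$ is large.
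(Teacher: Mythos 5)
Your approach is genuinely different from the paper's, and the gap you yourself flag is exactly where it breaks down. The paper does \emph{not} build $R$ from matchings. Instead, for every vertex $v$ it independently (with repetition) picks $d$ random edges of $G$ incident to $v$, sets $R$ to be the union of all chosen edges, and then invokes the \emph{Lov\'asz local lemma} to show that with positive probability $R$ simultaneously avoids all bad events: incompatible pairs of chosen edges, overly dense small sets (which would destroy small-set expansion), and non-expanding large sets. The point of this construction is that the random choices at distinct vertices are independent, so the dependency digraph on the events is sparse enough for the local lemma to apply, and the probability of each bad event can be bounded by elementary counting. This gives, for every fixed $\mathcal F$ and every typical instance of $G(n,p)$, a positive-probability construction of the desired $R$ — no concentration or a.a.s.~statement about the construction is needed, only that the probability is nonzero.

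The gap in your proposal is precisely the expansion analysis you defer. The claim that a uniformly random perfect matching of $G_i$ (a non-complete, only pseudorandom graph with minimum degree $\Omega(np)$) sends a fixed $t$-set into a set of size $<3t$ with probability $(O(t)/n)^{\Omega(t)}$ is not routine: perfect matchings of sparse bipartite graphs do not behave like products of independent choices, the $G_i$ depend adversarially on $\mathcal F$ as well as on the previously chosen matchings $M_1,\dots,M_i$, and you would also need to control the minimum degree of every $G_i$ pointwise (not just on average). Without such an estimate the union bound over $\binom{n}{t}\binom{n}{2t}$ bad pairs cannot be carried out, and nothing is proved. The alternative iterative-repair route you sketch has the same unresolved difficulty: you give no argument that the repair process terminates after $O(n)$ edge additions, and a single large deficient set $X$ might require $\Theta(|X|)$ compatible additions while new deficient sets are being created. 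In short, both variants of your plan reduce to an expansion claim that is the crux of the lemma and is left unproven, whereas the paper's ``each vertex samples $d$ incident edges'' model is chosen exactly so that the local lemma bypasses this difficulty.
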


The previous lemma will be used for `rotating' paths, while our next
lemma will be used for `extending' cycles.

\begin{lem} \label{lem:sparse_extension}
For a positive constant $d$,
if $p\gg\frac{\log n}{n}$, then a.a.s. in $G=G(n,p)$, each pair of
subgraphs $(P,R)$ satisfying the conditions below has at
least $\frac{1}{64}n^{2}p$ boosters relative to it:
\vspace{-0.1cm}
\begin{itemize}
  \setlength{\itemsep}{1pt} \setlength{\parskip}{0pt}
  \setlength{\parsep}{0pt}
\item[(i)] $R$ is an $(\frac{n}{4},2)$-expander with $|E(R)| \le d n$, and
\item[(ii)] $P$ is a longest path in $P\cup R$.
\end{itemize}
\end{lem}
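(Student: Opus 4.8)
The plan is to combine the deterministic lower bound on the number of boosters supplied by Lemma~\ref{lem:posa_rot_ext} with a union bound over all candidate pairs $(P,R)$. The crucial observation is that once $P$ and $R$ are fixed, the set $B(P,R)$ of edges of $K_n$ that are boosters for $(P,R)$ is completely determined by the graph $P\cup R$ (and the integer $|P|$): it does not depend on $G$ in any other way. Likewise, conditions (i) and (ii) in the statement are properties of the pair $(P,R)$ alone, so the family of pairs we must control is itself independent of $G$. Hence we may expose the edges of $G=G(n,p)$ all at once and, for each fixed pair $(P,R)$ of subgraphs of $K_n$ satisfying (i) and (ii), estimate the probability that $P$ and $R$ both happen to lie in $G$ while $B(P,R)$ contains few edges of $G$.

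So first I would fix such a pair $(P,R)$ on the vertex set $[n]$: $R$ is an $(\tfrac n4,2)$-expander with $|E(R)|\le dn$ and $P$ is a longest path in $P\cup R$. Lemma~\ref{lem:posa_rot_ext}, applied with $k=\tfrac n4$, gives $|B(P,R)|\ge\frac{(n/4+1)^2}{2}\ge\frac{n^2}{32}$. Deleting from $B(P,R)$ the at most $|E(P)|+|E(R)|\le(d+1)n$ boosters that are edges of $P\cup R$, we obtain a set
\[
B'(P,R)\ :=\ B(P,R)\setminus\bigl(E(P)\cup E(R)\bigr),\qquad |B'(P,R)|\ \ge\ \frac{n^2}{33}
\]
for $n$ large. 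Since $B'(P,R)$ is disjoint from $E(P)\cup E(R)$, the random variable $X:=|E(G)\cap B'(P,R)|\sim\mathrm{Bin}\bigl(|B'(P,R)|,p\bigr)$ is independent of the event $\{E(P)\cup E(R)\subseteq E(G)\}$. Its mean is at least $\frac{n^2p}{33}$, and since $\tfrac1{64}<\tfrac1{33}$ there is an absolute constant $\delta>0$ with $\tfrac1{64}n^2p\le(1-\delta)\,\BBE X$, so a Chernoff bound gives $\BFP[X<\tfrac1{64}n^2p]\le e^{-c\,n^2p}$ for an absolute constant $c>0$, and therefore also
\[
\BFP\Bigl[\,E(P)\cup E(R)\subseteq E(G)\ \text{and}\ X<\tfrac1{64}n^2p\,\Bigr]\ \le\ e^{-c\,n^2p}.
\]

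To finish, I would take a union bound over all relevant pairs $(P,R)$. There are at most $n^{n}$ paths on the vertex set $[n]$ and at most $n^{2dn}$ subgraphs of $K_n$ with at most $dn$ edges, hence at most $n^{(2d+1)n}=e^{O(n\log n)}$ pairs to consider. Since $B'(P,R)\subseteq B(P,R)$, if some pair $(P,R)$ of subgraphs of $G$ satisfying (i),(ii) had fewer than $\tfrac1{64}n^2p$ boosters in $G$ then the bad event above would occur for that pair; thus
\[
\BFP\bigl[\,\exists\ (P,R)\subseteq G\ \text{as in (i),(ii) with fewer than }\tfrac1{64}n^2p\ \text{boosters in }G\,\bigr]\ \le\ n^{(2d+1)n}\cdot e^{-c\,n^2p}\ =\ e^{O(n\log n)-c\,n^2p},
\]
which tends to $0$ because $p\gg\frac{\log n}{n}$ forces $n^2p\gg n\log n$. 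On the complementary a.a.s.\ event the lemma holds.

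The one point requiring care is exactly the one highlighted above: $B(P,R)$ is not a fixed set of pairs but is built from $P$ and $R$, which live inside the random graph, so one cannot apply concentration to ``the'' booster set directly. The remedy — union-bounding over all pairs $(P,R)\subseteq K_n$ and passing to the subfamily $B'(P,R)$ of boosters avoiding $E(P)\cup E(R)$, which makes the events ``$P,R\subseteq G$'' and ``$B'(P,R)$ has few edges of $G$'' independent — is the standard device, and the enormous gap between the $e^{O(n\log n)}$ pairs and the $e^{-\Omega(n^2p)}$ failure probability leaves ample slack. Everything else (the counting bound on pairs, and the Chernoff estimate yielding the explicit constant $\tfrac1{64}$) is routine.
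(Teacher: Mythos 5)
Your proof is correct and follows the paper's argument precisely: fix a pair $(P,R)\subseteq K_n$ satisfying (i) and (ii), apply Lemma~\ref{lem:posa_rot_ext} to get at least $n^2/32$ boosters in $K_n$, use Chernoff to bound the failure probability by $e^{-\Omega(n^2 p)}$, and union-bound over the $e^{O(n\log n)}$ candidate pairs. The extra care you take---passing to $B'(P,R)\subseteq B(P,R)$ disjoint from $E(P)\cup E(R)$ and invoking independence---is sound but not actually required, since once $(P,R)$ is fixed in $K_n$ the count $|B(P,R)\cap E(G)|$ is already binomial, and one can simply bound $\BFP\bigl[P,R\subseteq G\ \text{and few boosters}\bigr]\le\BFP\bigl[\text{few boosters}\bigr]$ by monotonicity rather than by independence.
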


Theorem \ref{thm:sparse_p} easily follows from the two lemmas above.

\begin{proof}[Proof of Theorem \ref{thm:sparse_p}]
Let $\mu$ and $d$ be constants coming from Lemma \ref{lem:sparse_rotation}.
We assume that $\mu \le \frac{1}{256(d+1)}$ by reducing
its value if necessary.
Suppose that an instance $G$ of $G(n,p)$ that satisfies the conclusions
of Lemmas \ref{lem:sparse_rotation} and \ref{lem:sparse_extension}
is given. Let $R \subseteq G$ be an $(\frac{n}{4},2)$-expander whose existence
is guaranteed by Lemma \ref{lem:sparse_rotation}.

Given an incompatibility system $\mathcal{F}$ over $G$,
let $P \subseteq G$ be a path of maximum length among all paths
satisfying the following two conditions:
(i) $P \cup R$ is compatible with $\mathcal{F}$,
and (ii) $P$ is a longest path in $P \cup R$. Note that we are maximizing
over a non-empty collection, since a longest path in $R$ meets
the criteria.

By Lemma \ref{lem:sparse_extension}, the graph $G$ contains at least
$\frac{1}{64}n^2p$ boosters for the pair $(P,R)$.
Among these boosters, we would like to find a
booster $e$ such that $P \cup R \cup \{e\}$
is compatible with $\mathcal{F}$.
Towards this end, for each edge $e'=\{u,v\} \in E(P \cup R)$ we forbid
to use the edges incompatible with $e'$ as boosters.
Since $\mathcal{F}$ is $\mu np$-bounded,
each edge of $P \cup R$ forbids at most $2\mu np$ other edges.
Furthermore, since the number of edges in $P \cup R$ is at most
$n + |E(R)| \le (d + 1)n$, the
total number of edges forbidden is at most
\[
(n+|E(R)|)\cdot 2 \mu np\le 2(d+1)\mu n^{2}p < \frac{1}{64}n^2p,
\]
which is less than the number of boosters.
Therefore, we can find a booster $e$ such that $P \cup R \cup \{e\}$
is still compatible with $\mathcal{F}$.

Since $e$ is a booster for $(P, R)$, we see that there exists a
cycle $C$ of length $|P|$ in $P \cup R \cup \{e\}$. This cycle is compatible
with $\mathcal{F}$, since it is a subgraph of a graph compatible
with $\mathcal{F}$. Thus if $C$ is a Hamilton cycle, then we are done.
Otherwise since all $(\frac{n}{4},2)$-expanders are connected, there exists
a vertex $v \notin V(C)$ and an edge $e' \in E(R)$ connecting $v$ to
$C$. By using this edge, we can extend the cycle $C$ to a path in $P \cup R \cup \{e\}$
that is longer than $P$. Thus if we define $P'$ as the longest path
in $P \cup R \cup \{e\}$, then since $P' \cup R \subseteq P \cup R \cup \{e\}$,
we see that $P' \cup R$ is compatible with $\mathcal{F}$,
and $P'$ is a longest path in $P' \cup R$. This contradicts the
fact that $P$ is chosen as a path of maximum length subject
to these conditions, and shows that $C$ is a Hamilton cycle.
\end{proof}

\subsection{Proof of lemmas}

We first state two well-known results in probabilistic combinatorics.
The first theorem is a form of Chernoff's inequality as appears
in \cite[Theorem 2.3]{Mcdiarmid}.

\begin{thm}
Let $X\sim Bi(n,p)$, where $Bi(n,p)$ denotes the binomial random variable with parameters $n$ and $p$. For any $s\le\frac{1}{2}np$
and $t\ge2np$, we have
\[
\mathbf{P}(X\le s)\le e^{-s/4}\quad\text{and}\quad\mathbf{P}(X\ge t)\le e^{-3t/16}.
\]
Moreover, for all $0 < \varepsilon < \frac{1}{2}$ we have,
\[
\mathbf{P}(|X-np| > \varepsilon np) \le e^{-\Omega(\varepsilon^2np)}.
\]
\end{thm}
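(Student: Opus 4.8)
The plan is to use the classical exponential-moment (Chernoff--Bernstein) argument. I would write $X = \sum_{i=1}^{n} X_i$ as a sum of independent Bernoulli$(p)$ random variables. For any $\lambda > 0$, Markov's inequality applied to $e^{\lambda X}$ gives $\mathbf{P}(X \ge t) \le e^{-\lambda t}\,\mathbb{E}\big[e^{\lambda X}\big]$, and by independence together with $1 + x \le e^{x}$,
\[
\mathbb{E}\big[e^{\lambda X}\big] = \big(1 + p(e^{\lambda} - 1)\big)^{n} \le \exp\!\big(np(e^{\lambda} - 1)\big),
\]
so $\mathbf{P}(X \ge t) \le \exp\!\big(np(e^{\lambda}-1) - \lambda t\big)$ for every $\lambda>0$. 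Choosing the optimal value $e^{\lambda} = t/(np)$ (legitimate since $t \ge 2np > np$) yields $\mathbf{P}(X \ge t) \le \exp\!\big(-np\,\varphi(t/np)\big)$, where $\varphi(x) = x\ln x - x + 1 \ge 0$. Applying the same argument to $e^{-\lambda X}$ with $e^{-\lambda} = s/(np)$ (legitimate since $s \le \tfrac12 np < np$) gives, symmetrically, $\mathbf{P}(X \le s) \le \exp\!\big(-np\,\varphi(s/np)\big)$.

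With these two master inequalities in hand, each of the three claimed bounds reduces to an elementary estimate for $\varphi$. For the upper tail, I set $t = \beta np$ with $\beta \ge 2$; it suffices to verify $\varphi(\beta) \ge \tfrac{3}{16}\beta$, i.e.\ $\ln\beta - 1 + \beta^{-1} \ge \tfrac{3}{16}$. The left-hand side has derivative $(\beta - 1)/\beta^{2} > 0$, so it increases on $[2,\infty)$, and one checks the inequality at $\beta = 2$. For the lower tail, I set $s = \alpha np$ with $0 < \alpha \le \tfrac12$; it suffices to verify $\varphi(\alpha) \ge \tfrac14\alpha$, i.e.\ $\alpha^{-1} + \ln\alpha \ge \tfrac54$. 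The left-hand side has derivative $(\alpha - 1)/\alpha^{2} < 0$ on $(0,1)$, so it is minimized over $(0,\tfrac12]$ at $\alpha = \tfrac12$, where one checks the inequality. For the ``moreover'' clause, I apply the master inequalities with $t = (1+\varepsilon)np$ and $s = (1-\varepsilon)np$, use the Taylor-type bounds $\varphi(1+\varepsilon) \ge \varepsilon^{2}/3$ and $\varphi(1-\varepsilon) \ge \varepsilon^{2}/2$ (valid for $0 < \varepsilon < \tfrac12$, obtained by expanding $(1\pm\varepsilon)\ln(1\pm\varepsilon)$), and sum the two tail estimates to obtain $\mathbf{P}(|X - np| > \varepsilon np) \le 2e^{-\varepsilon^{2} np/3} = e^{-\Omega(\varepsilon^{2} np)}$.

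There is no genuine obstacle in this argument — it is a textbook computation — so the only point requiring any care is that the optimized Chernoff exponent must really dominate the claimed linear (resp.\ quadratic) expression right at the extremal parameters $\beta = 2$ and $\alpha = \tfrac12$, where the two sides are closest; the monotonicity observations above are exactly what reduce each case to a single numerical verification. Alternatively, one can simply quote the inequality in the stated form from \cite{Mcdiarmid}, but the self-contained derivation above is short enough to spell out.
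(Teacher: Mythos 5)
The paper does not prove this statement; it is quoted verbatim as Theorem 2.3 of McDiarmid's survey (the reference \cite{Mcdiarmid}), so there is no in-paper argument to compare against. Your self-contained derivation via the exponential-moment method is correct: the master bound $\mathbf{P}(X\ge t)\le\exp(-np\,\varphi(t/np))$ with $\varphi(x)=x\ln x-x+1$ (and its mirror for the lower tail) is standard, and the reductions to single numerical checks are sound — at $\beta=2$ one has $\ln 2-1+\tfrac12\approx 0.193\ge\tfrac{3}{16}$, at $\alpha=\tfrac12$ one has $2+\ln\tfrac12\approx 1.307\ge\tfrac54$, and the monotonicity of the respective auxiliary functions handles the rest of the ranges (with $s=0$ trivial). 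The Taylor-type bounds $\varphi(1+\varepsilon)\ge\varepsilon^2/3$ and $\varphi(1-\varepsilon)\ge\varepsilon^2/2$ are also valid for $0<\varepsilon<\tfrac12$, giving the stated two-sided concentration. This is exactly the textbook route one would use to establish the cited inequality, so there is no gap; the only remark worth making is that in the paper's context the result is treated as a black box rather than rederived.
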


The second theorem is the standard local lemma (see, e.g., \cite{AlSp}).
\begin{thm}
Let $A_1, A_2, \cdots, A_n$ be events in an arbitrary probability space.
A directed graph $D=(V,E)$ on the set of vertices $V=[n]$
is called a dependency digraph for the events $A_1, \ldots, A_n$
if for each $i \in [n]$, the event $A_i$ is mutually independent of
all the events $\{A_j\,:\,(i,j)\notin E\}$. Suppose that $D=(V,E)$
is a dependency digraph for the above events and suppose that there are
real numbers $x_1,\ldots,x_n$ such that $0 \le x_i < 1$ and
$\mathbf{P}(A_i) \le x_i \prod_{(i,j)\in E} (1-x_j)$ for all $i \in [n]$.
Then $\mathbf{P}(\bigcap_{i=1}^{n} \overline{A_i}) \ge \prod_{i=1}^{n}(1-x_i)$.
In particular, with positive probability no event $A_i$ holds.
\end{thm}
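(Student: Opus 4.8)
The plan is to prove the standard inductive strengthening of the statement, from which the conclusion follows by the chain rule. The key claim, proved by induction on $|S|$, is: for every index $i \in [n]$ and every subset $S \subseteq [n] \setminus \{i\}$ with $\mathbf{P}\big(\bigcap_{j \in S} \overline{A_j}\big) > 0$, one has $\mathbf{P}\big(A_i \,\big|\, \bigcap_{j \in S} \overline{A_j}\big) \le x_i$. The base case $S = \emptyset$ is immediate: $\mathbf{P}(A_i) \le x_i \prod_{(i,j)\in E}(1-x_j) \le x_i$, since each factor $1-x_j$ lies in $(0,1]$ and hence the product is at most $1$.

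For the inductive step I would split $S = S_1 \cup S_2$, where $S_1 = \{\, j \in S : (i,j) \in E \,\}$ collects the out-neighbours of $i$ lying in $S$ and $S_2 = S \setminus S_1$. Writing $B = \bigcap_{k \in S_2} \overline{A_k}$ (note $\mathbf{P}(B) > 0$ since $B \supseteq \bigcap_{j \in S}\overline{A_j}$), the definition of conditional probability gives
\[
\mathbf{P}\Big(A_i \,\Big|\, \bigcap_{j \in S}\overline{A_j}\Big)
= \frac{\mathbf{P}\big(A_i \cap \bigcap_{j \in S_1}\overline{A_j} \,\big|\, B\big)}{\mathbf{P}\big(\bigcap_{j \in S_1}\overline{A_j} \,\big|\, B\big)}.
\]
For the numerator, bound it above by $\mathbf{P}(A_i \mid B)$; since $(i,k) \notin E$ for every $k \in S_2$, the event $A_i$ is mutually independent of $\{A_k : k \in S_2\}$, so $\mathbf{P}(A_i \mid B) = \mathbf{P}(A_i) \le x_i \prod_{(i,j)\in E}(1-x_j)$.

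For the denominator, if $S_1 = \emptyset$ it equals $1$ and we are done; otherwise list $S_1 = \{j_1,\dots,j_r\}$ and apply the chain rule,
\[
\mathbf{P}\Big(\bigcap_{\ell=1}^{r}\overline{A_{j_\ell}} \,\Big|\, B\Big)
= \prod_{\ell=1}^{r} \mathbf{P}\Big(\overline{A_{j_\ell}} \,\Big|\, \overline{A_{j_1}}\cap\cdots\cap\overline{A_{j_{\ell-1}}}\cap B\Big)
\ge \prod_{\ell=1}^{r}\big(1-x_{j_\ell}\big),
\]
where each factor is at least $1-x_{j_\ell}$ by the induction hypothesis applied to the index $j_\ell$ with the conditioning set $\{j_1,\dots,j_{\ell-1}\}\cup S_2$, which omits $j_\ell$ and has size at most $|S|-1$. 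Since $S_1 \subseteq \{\, j : (i,j)\in E \,\}$, we have $\prod_{\ell=1}^{r}(1-x_{j_\ell}) \ge \prod_{(i,j)\in E}(1-x_j)$. Dividing the numerator bound by the denominator bound, the two products cancel and we get $\mathbf{P}\big(A_i \mid \bigcap_{j\in S}\overline{A_j}\big) \le x_i$, completing the induction. Finally, applying the chain rule once more and the claim with $S = \{1,\dots,i-1\}$ for each factor,
\[
\mathbf{P}\Big(\bigcap_{i=1}^{n}\overline{A_i}\Big) = \prod_{i=1}^{n}\mathbf{P}\Big(\overline{A_i} \,\Big|\, \bigcap_{j=1}^{i-1}\overline{A_j}\Big) \ge \prod_{i=1}^{n}(1-x_i) > 0.
\]

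There is no genuine obstacle here — the argument is a clean double application of the chain rule wrapped around an induction — but the step requiring the most care is the bookkeeping around degenerate conditioning. One must verify inductively that every event being conditioned on has strictly positive probability (so that the conditional probabilities, and in particular the displayed ratio, are well defined); this is guaranteed precisely because each partial product $\prod(1-x_j)$ is positive, which forces $\mathbf{P}\big(\bigcap_{j\in S}\overline{A_j}\big) = \mathbf{P}\big(\bigcap_{j\in S_1}\overline{A_j}\mid B\big)\,\mathbf{P}(B) > 0$ to propagate correctly through the recursion. The other point to handle explicitly is the trivial case $S_1 = \emptyset$ (equivalently, $i$ isolated in $D$ or none of its out-neighbours lying in $S$), where the denominator is simply $1$.
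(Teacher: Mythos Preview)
Your proof is correct and is the standard inductive argument for the Lov\'asz Local Lemma. Note, however, that the paper does not supply its own proof of this theorem: it is stated as a known tool and cited to Alon--Spencer \cite{AlSp}, so there is nothing to compare against beyond observing that your argument is precisely the textbook proof found in that reference.
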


The following lemma establishes several properties of $G(n,p)$ that
we need.

\begin{lem} \label{lem:property_g_n_p}
If $p\gg\frac{\log n}{n}$, then $G(n,p)$ a.a.s. satisfies the following
properties.
\vspace{-0.1cm}
\begin{itemize}
  \setlength{\itemsep}{0pt} \setlength{\parskip}{0pt}
  \setlength{\parsep}{0pt}
\item[(i)] all degrees are $(1+o(1))np$,
\item[(ii)] for all sets $X$ of size $|X|<(np^{4})^{-1/3}$, we have $e(X)\le8|X|$,
\item[(iii)] for all sets $X$ of size $|X|=t$ for $(np^{4})^{-1/3}\le t \le n$,
we have $e(X)\le t^{2}p\cdot\Big(\frac{n}{t}\Big)^{1/2}$, and
\item[(iv)] for disjoint sets $X$ and $Y$ satisfying $|X||Y|p\gg n$, we
have $e(X,Y)\ge\frac{1}{2}|X||Y|p$.
\end{itemize}
\end{lem}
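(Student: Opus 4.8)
The plan is to prove Lemma~\ref{lem:property_g_n_p} by verifying each of the four properties separately, each via a first-moment (union bound) argument combined with the Chernoff estimates stated above; throughout we write $p = p(n)$ with $np \gg \log n$.

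\medskip

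\noindent\textbf{Property (i).} For a fixed vertex $v$, the degree $\deg(v)$ is distributed as $\mathrm{Bi}(n-1,p)$, so by the third (two-sided) Chernoff bound, $\mathbf{P}\big(|\deg(v) - (n-1)p| > \varepsilon (n-1) p\big) \le e^{-\Omega(\varepsilon^2 np)}$. Taking $\varepsilon = \varepsilon(n) \to 0$ slowly enough that still $\varepsilon^2 np \gg \log n$ (for instance $\varepsilon = (\log n / (np))^{1/3}$), a union bound over the $n$ vertices gives that a.a.s. every degree is $(1+o(1))np$.

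\medskip

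\noindent\textbf{Properties (ii) and (iii).} These are both statements that no small or medium set spans too many edges, and both follow from bounding $\mathbf{P}\big(e(X) \ge m\big)$ for $X$ of size $t$ by $\binom{\binom{t}{2}}{m} p^m \le \big(\frac{e t^2 p}{2m}\big)^m$, then union bounding over the $\binom{n}{t} \le (en/t)^t$ choices of $X$. For (ii), with $m = 8t$ and $t < (np^4)^{-1/3}$, the resulting bound is at most $\big[\frac{en}{t}\cdot\big(\frac{e t p}{16}\big)^8\big]^t$; one checks that $t < (np^4)^{-1/3}$ forces $t^7 p^8 \cdot n$ to be small (indeed $(tp)^8 \cdot (n/t) \le (np^4)^{8/3 \cdot (-1)} \cdot \ldots$, a short computation showing the bracket is $o(1)$ uniformly, or at least summable over $t \ge 1$), so summing the geometric-type series over all valid $t$ gives $o(1)$. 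For (iii), with $m = t^2 p (n/t)^{1/2}$ and $(np^4)^{-1/3} \le t \le n$, the per-set probability is at most $\big(\frac{e}{2}(t/n)^{1/2}\big)^{m}$; since $m = t^{3/2} n^{1/2} p \ge t^{3/2} n^{1/2} p$ and in the relevant range $m$ is comfortably larger than $t \log(en/t)$ (this is where the lower bound $t \ge (np^4)^{-1/3}$ is used, to guarantee $m$ dominates the entropy term $t\log(en/t)$), the union bound $(en/t)^t \cdot \big(\frac{e}{2}(t/n)^{1/2}\big)^m$ is $o(1)$, and summing over $t$ still gives $o(1)$. The main technical care here is checking that the exponent $m$ beats $t \log(en/t)$ across the whole range; I expect this bookkeeping — rather than any conceptual point — to be the fiddliest part, and the threshold $(np^4)^{-1/3}$ is precisely calibrated to make the two regimes (ii) and (iii) meet.

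\medskip

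\noindent\textbf{Property (iv).} For fixed disjoint $X, Y$ with $|X||Y|p \gg n$, the quantity $e(X,Y)$ is distributed as $\mathrm{Bi}(|X||Y|, p)$ with mean $|X||Y|p$, so by the lower-tail Chernoff bound $\mathbf{P}\big(e(X,Y) \le \tfrac12 |X||Y|p\big) \le e^{-|X||Y|p/16}$ (using $s = \tfrac12 |X||Y|p \le \tfrac12$ of the mean, hence the first estimate applies with the exponent $s/4 = |X||Y|p/8$; I will use whichever constant is convenient). The number of pairs $(X,Y)$ with $|X| = a, |Y| = b$ is at most $\binom{n}{a}\binom{n}{b} \le 2^{2n}$, so a union bound gives failure probability at most $\sum_{a,b} 2^{2n} e^{-abp/16}$, and since $abp \gg n$ whenever the pair is relevant, each term is $e^{-\omega(n)}$, which dominates $2^{2n}$; summing over the at most $n^2$ choices of $(a,b)$ still yields $o(1)$. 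The only subtlety is that the hypothesis ``$|X||Y|p \gg n$'' must be read as a uniform statement — I will fix a function $\omega(n) \to \infty$ and prove the claim for all pairs with $|X||Y|p \ge \omega(n) n$, which is exactly what is needed in later applications.

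\medskip

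I expect no serious obstacle: all four parts are routine union-bound-plus-Chernoff arguments, and the ``hard part,'' to the extent there is one, is simply verifying in (ii) and (iii) that the entropy term $t \log(en/t)$ coming from the choice of $X$ is dominated by the exponent in the edge-count tail bound throughout the stated ranges of $t$ — which is guaranteed by the specific exponents $8|X|$ and $t^2p(n/t)^{1/2}$ and the breakpoint $(np^4)^{-1/3}$ having been chosen for exactly this purpose.
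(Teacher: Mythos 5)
Your overall plan matches the paper's: (i) and (iv) by Chernoff plus a union bound, and (ii) by the first-moment estimate $\mathbf{P}(e(X)\ge m)\le\binom{\binom{t}{2}}{m}p^m\le(et^2p/(2m))^m$ compared against the $(en/t)^t$ entropy term. Parts (i), (ii) and (iv) are fine as sketched (your computation for (ii) reduces to $nt^7p^8\lesssim (np)^{-4/3}=o(1)$ for $t<(np^4)^{-1/3}$, which does check out).

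There is, however, a genuine gap in your argument for (iii). The bound you propose, $\mathbf{P}(e(X)\ge m)\le\bigl(\tfrac{e}{2}(t/n)^{1/2}\bigr)^m$ with $m=t^2p(n/t)^{1/2}$, has base $\tfrac{e}{2}(t/n)^{1/2}\ge 1$ as soon as $t\ge 4n/e^2\approx 0.54n$, so it gives nothing there, and in fact one needs the base comfortably below $1$ (say $\le 1/e$, i.e. $t\lesssim 4n/e^4$) for the heuristic ``$m$ beats $t\log(en/t)$'' to close the union bound. Since $\binom{n}{t}$ is of order $2^n$ around $t\approx n/2$, the argument genuinely breaks on roughly half the stated range of $t$. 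The point is that $m=t^2p(n/t)^{1/2}$ is only about a constant multiple of the mean $\binom{t}{2}p$ for $t$ near $n$, and the crude ``number of $m$-subsets all present'' bound is useless in that regime; this is exactly where the paper switches to the Chernoff upper-tail estimate. Concretely, since $m=t^2p(n/t)^{1/2}\ge t^2p\ge 2\binom{t}{2}p$ for all $t\le n$, the second Chernoff inequality quoted in the paper gives $\mathbf{P}(e(X)\ge m)\le e^{-3m/16}=e^{-ct^2p(n/t)^{1/2}}$ uniformly in $t\in[(np^4)^{-1/3},n]$; then checking that $ctp(n/t)^{1/2}-2\log(en/t)$ is increasing in $t$ and positive at $t=(np^4)^{-1/3}$ (where it equals $c(np)^{1/3}-O(\log np)\to\infty$) gives $ct^2p(n/t)^{1/2}\ge 2t\log(en/t)$ throughout, and the union bound closes. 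Replacing your first-moment bound in (iii) by this Chernoff estimate repairs the proof; the rest of your write-up is essentially the paper's argument.
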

\begin{proof}
We omit the proofs of Properties (i) and (iv), since they
follow easily from direct applications of Chernoff's inequality together
with the union bound.

The probability
of a fixed set $X$ of size $t$ to violate Property (ii) is at most
\[
{{\binom{t}{2}} \choose 8t}p^{8t}\le\Big(\frac{etp}{8}\Big)^{8t}<\Big(\Big(\frac{e}{8}\Big)^{8}\Big(\frac{t}{n}\Big)^{2}\Big)^{t}.
\]
Hence by the union bound, the probability of Property (ii) being violated
is at most
\[
\sum_{t=1}^{(np^{4})^{-1/3}}{n \choose t}\cdot\Big(\Big(\frac{e}{8}\Big)^{8}\Big(\frac{t}{n}\Big)^{2}\Big)^{t}<\sum_{t=1}^{n/(\log n)^{4/3}}\Big(e\cdot\Big(\frac{e}{8}\Big)^{8}\Big(\frac{t}{n}\Big)\Big)^{t}=o(1).
\]

Similarly, the probability of a fixed set $X$ of size $t$ to violate
Property (iii) is, by Chernoff's inequality, at most $e^{-c\cdot t^{2}p(n/t)^{1/2}}$
for some positive constant $c$. The function
\[
ctp\Big(\frac{n}{t}\Big)^{1/2}-2\log\Big(\frac{en}{t}\Big)
\]
is increasing for $t>0$ and for $t=(np^{4})^{-1/3}$ equals
$c(np)^{1/3}-O(\log np)>0$. Hence, for $t \ge (np^4)^{-1/3}$, we have that
\[
ct^{2}p\Big(\frac{n}{t}\Big)^{1/2}\ge2t\log\Big(\frac{en}{t}\Big).
\]
Thus by the union bound, the probability
of Property (iii) being violated is at most
\[
	\sum_{t=(np^{4})^{-1/3}}^{n}{n \choose t}e^{-ct^{2}p(n/t)^{1/2}}
	\le
	\sum_{t=(np^{4})^{-1/3}}^{n}e^{t\log(\frac{en}{t})-ct^{2}p(n/t)^{1/2}}=o(1).
\]
\end{proof}

We first prove Lemma \ref{lem:sparse_rotation} which we restate here
for the reader's convenience. The proof is based on a straightforward
application of local lemma, but is rather lengthy.

\begin{lem*}
There exist positive constants $\mu$ and $d$ such that 
if $p\gg\frac{\log n}{n}$, then $G=G(n,p)$
a.a.s.~has the following property. For every $\mu np$-bounded
incompatibility system $\mathcal{F}$ over $G$, there exists a subgraph
$R\subseteq G$ with the following properties:
\vspace{-0.1cm}
\begin{itemize}
  \setlength{\itemsep}{1pt} \setlength{\parskip}{0pt}
  \setlength{\parsep}{0pt}
\item[(i)] $R$ is compatible with $\mathcal{F}$,
\item[(ii)] $R$ is an $(\frac{n}{4},2)$-expander, and
\item[(iii)] $|E(R)| \le d n$.
\end{itemize}
\end{lem*}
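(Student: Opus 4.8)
The plan is to first reduce to a deterministic statement. Since $G(n,p)$ a.a.s.\ satisfies properties (i)--(iv) of Lemma~\ref{lem:property_g_n_p}, it suffices to prove the following: if $G$ is an $n$-vertex graph satisfying those four properties, $n$ is large, $\mu$ is a sufficiently small absolute constant, and $\mathcal{F}$ is \emph{any} $\mu np$-bounded incompatibility system over $G$, then $G$ has a subgraph $R$ satisfying (i)--(iii). We produce such an $R$ by a probabilistic argument, with $G$ and $\mathcal{F}$ now fixed. Let $k$ be a large absolute constant, let each vertex $v$ independently pick a uniformly random set of $k$ distinct $G$-neighbours, and let $R$ be the union of all the picked edges. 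Since $|E(R)|\le kn$ always, property (iii) holds with $d=k$, and it remains to show that with positive probability $R$ is simultaneously compatible with $\mathcal{F}$ and an $(n/4,2)$-expander.

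Compatibility is where the Local Lemma enters. For every vertex $v$ and every incompatible pair $\{e,e'\}\in F_v$, say with $e=uv$ and $e'=vw$, let $A_{\{e,e'\}}$ be the event that both $e$ and $e'$ lie in $R$. Since all degrees of $G$ equal $(1+o(1))np$, one computes $\mathbf{P}(A_{\{e,e'\}})=O\!\big(k^2/(np)^2\big)$. The event $A_{\{e,e'\}}$ is determined solely by the picks made at $u$, $v$ and $w$, hence is mutually independent of every $A_{\{f,f'\}}$ coming from an incompatible pair with no endpoint in $\{u,v,w\}$; and since $\mathcal{F}$ is $\mu np$-bounded, the number of incompatible pairs having an endpoint in $\{u,v,w\}$ is $O(\mu(np)^2)$. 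Choosing all Local Lemma weights equal to $2\,\mathbf{P}(A_{\{e,e'\}})$, the hypothesis of the Local Lemma reduces to an inequality of the form $O(\mu k^2)\le 1$, which holds once $\mu$ is a small enough constant depending on $k$. Hence $\mathbf{P}(R\text{ is compatible})>0$.

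For the expander property the point is that a non-expanding set is essentially forbidden. If $X$ with $|X|=t\le n/4$ had $|N_R(X)\setminus X|<2t$, then every vertex of $X$ would have all of its $k$ picked neighbours inside the set $S:=X\cup(N_R(X)\setminus X)$, of size $|S|<3t$; summing over the vertices of $X$ then gives $e_G(S)\ge kt/2>\frac{k}{6}|S|$. For small $S$ this already contradicts Lemma~\ref{lem:property_g_n_p}(ii) as soon as $k>48$, so no small non-expanding set exists, deterministically. For the remaining range of sizes one combines the density estimates of Lemma~\ref{lem:property_g_n_p}(iii)--(iv) with a union bound over the random picks, taking $k$ large so that the probability that some such $X$ exists becomes tiny; moreover these non-expansion events can be incorporated into the same Local Lemma application, since the event that a given set fails to expand is again determined by the picks at the vertices of that set, so the dependency structure remains controlled. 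This yields, with positive probability, a single $R$ satisfying (i)--(iii).

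The crux --- and the reason the argument is lengthy --- is the need for compatibility and expansion to hold together. The Local Lemma only certifies that a compatible $R$ exists, with an a priori exponentially small probability, so the expansion of $R$ cannot merely be shown to hold a.a.s.\ and then forgotten: it must be secured with a failure probability (or with Local Lemma weights) small enough to coexist with the compatibility constraint, uniformly over the whole range $p\gg\frac{\log n}{n}$. The regime in which $np$ is only barely super-logarithmic is the delicate one --- there a plainly random $k$-out subgraph of $G$ is not evidently a sufficiently good expander, as the union bounds over vertex sets are too weak to beat the Local Lemma error --- and dealing with this regime carefully, with the exponents in Lemma~\ref{lem:property_g_n_p}(ii)--(iv) tuned precisely for it, is what takes up most of the proof.
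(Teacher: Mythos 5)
Your proposal follows essentially the same route as the paper: construct $R$ as a random $k$-out (the paper uses "$d$ edges chosen with repetition") subgraph of $G$, then apply the Lov\'asz Local Lemma to a family of bad events consisting of (a) pairs of incompatible edges both appearing, and (b) failures of expansion, with the small non-expanding sets ruled out deterministically via Lemma~\ref{lem:property_g_n_p}(ii). You have correctly identified the crux --- that a.a.s.\ expansion cannot be decoupled from the Local Lemma because the positive probability it certifies is exponentially small, so the expansion events $B(W)$ and $C(X,Y)$ must be folded into the same dependency graph --- and your sketch of how the deterministic density bounds and the weights conspire is consistent with what the paper actually does. The only substantive differences are cosmetic (distinct neighbours per vertex vs.\ trials with repetition, which shifts the threshold for the small-set case from $d>24$ to $k>48$), so this is the same proof.
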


\begin{proof}
Throughout the proof, let $c_0 = e, C_1 = C_2 = \frac{1}{2}, \alpha = \frac{1}{2}\Big(\frac{1}{20e}\Big)^{2}$, $d = 10(20e)^2$, and $\mu = \frac{1}{25c_0 d^2} < 10^{-11}$.
Condition on $G=G(n,p)$
satisfying the events of Lemma \ref{lem:property_g_n_p}. Suppose that we are given
a $\mu np$-bounded incompatibility system $\mathcal{F}$ over $G$. For all $v\in V(G)$,
independently (with repetition) choose $d$ random edges in $G$ incident to $v$,
and let $F(v)$ be the set of chosen edges. Let $R$
be the graph whose edge set is $\bigcup_{v} F(v)$. We claim that
$R$ has the properties listed above with positive probability. Note that 
Property (iii) trivially holds.

Let $t_{0}=\frac{1}{3}(np^{4})^{-1/3},\, t_{1}=\alpha n$, and $t_{2}=n/4$
for some constant $\alpha$  to be chosen later. There
are three types of events that we consider. First are events considering
compatibility of edges. For a pair of edges $e_{1}$ and $e_{2}$,
if $e_{1}\neq e_{2}$, then let $A(e_{1},e_{2})$ be the event that
both edges $e_{1}$ and $e_{2}$ are in $R$, and if $e_{1}=e_{2}=e$,
then let $A(e,e)$ be the event that the edge $e$ is chosen in
two different trials. Define
\[
\mathcal{A}=\{A(e_{1},e_{2})\,:\, e_{1},e_{2}\text{ are incompatible, or }e_{1}=e_{2}\}.
\]
Second are events considering expansion of small sets. For a set $W$,
let $B(W)$ be the event that $e_{R}(W)\ge\frac{d}{3}|W|$, and define, for $t_0\le t\le t_1$,
\[
\mathcal{B}_{t}=\{B(W)\,:\,|W|=3t\}\,.
\]
Third are events considering expansion of large sets. For a pair of
disjoint subsets $X$ and $Y$, let $C(X,Y)$ be the event that $e_{R}(X,Y)=0$,
and define, for $t_1\le t\le t_2$,
\[
\mathcal{C}_{t}=\{C(X,Y)\,:\, X\cap Y=\emptyset,\,|X|=t,\,|Y|=n-3t\}\,.
\]

We first prove that Properties (i) and (ii) hold if none of
the events in $\mathcal{A},\mathcal{B}_{t}$, and $\mathcal{C}_{t}$
happen. Property (i) obviously holds  if none of the events in $\mathcal{A}$
happens. Note that not having the events
$A(e,e)$ for all edges $e$ implies the fact that we obtain distinct
edges at each trial. Hence each set $X$ has
at least $d|X|$ distinct edges incident to it, and in particular, we have $|E(R)|=d n$.
For Property (ii), consider a set $X$ of size $|X|=t$
and assume that $|N_{R}(X) \setminus X|<2|X|$. Let $W$ be a superset of $X\cup N_{R}(X)$
of size exactly $3|X|$. By the fact mentioned above, we see that
$e_{R}(W)\ge d |X|>\frac{d}{3}|W|$.\medskip{}

1. If $t<t_{0}$, then since $d>24$, this contradicts the fact that
$G(n,p)$ has $e(W)\le8|W|$,

2. if $t_{0}\le t\le t_{1}$, then it contradicts the event $B(W)$,
and

3. if $t_{1}\le t\le n/4$, then we have $e(X,V\setminus W)=0$
and it contradicts the event $C(X,V\setminus W)$.\medskip{}

Hence in all three cases we arrive at a contradiction. Therefore if
none of the events in $\mathcal{A},\mathcal{B}_{t}$, and $\mathcal{C}_{t}$
holds, then we obtain all the claimed properties (i), (ii), and (iii).

\medskip{}

We will use the local lemma to prove that with positive probability none of the events
in $\mathcal{A},\mathcal{B}_{t}$, and $\mathcal{C}_{t}$ holds. Towards
this end, we define the dependency graph $\Gamma$ with vertex set $\mathcal{V} = \mathcal{A} \cup \bigcup_{t= t_0}^{t_1} \mathcal{B}_t \cup \bigcup_{t=t_1}^{n/4} \mathcal{C}_t$.
Note that the graph $R$ is determined by the outcome of $n$ events $\{F(v)\}_{v \in V(G)}$
(recall that $F(v)$ is a set of $d$ random edges incident to $v$).
We let $V_1, V_2 \in \mathcal{V}$ be adjacent in $\Gamma$ if there exists
a vertex $v \in V(G)$ such that both $V_1$ and $V_2$ are dependent on the outcome of $F(v)$,
i.e., if there exist edges $e_1$ and $e_2$ both incident to $v$ such that $V_1$ depends on $e_1$
and $V_2$ depends on $e_2$.
This graph can be used as the dependency digraph in the local lemma since
$F(v)$ and $F(w)$ are independent for all distinct vertices $v,w \in V(G)$.

In order to apply the local lemma, for each event in $\mathcal{V}$, 
we stimate its probability and the degree of the corresponding vertex in $\Gamma$.
In all cases, for
the dependency with events in $\mathcal{B}_{t}$ and $\mathcal{C}_{t}$,
we use the crude bounds $|\mathcal{B}_{t}|$ and $|\mathcal{C}_{t}|$.

\medskip{}

\noindent \textbf{Family $\mathcal{A}$} : For a fixed pair of intersecting edges $e_{1}$
and $e_{2}$, to bound the probability of the event $A(e_{1},e_{2})$,
first, for each $e_1$ and $e_2$, select from which vertex and on which trial that
edge was chosen (at most $2d$ choices for each edge), and second, compute
the probability that $e_{1}$ and $e_{2}$ were chosen at those trials
(probability at most $\Big(\frac{1+o(1)}{np}\Big)^{2}$). This gives
\[
\mathbf{P}(A(e_{1},e_{2}))\le (1+o(1)) \left(\frac{2d}{np}\right)^{2}.
\]
Define $x=c_{0}\left(\frac{2d}{np}\right)^{2}$ for some constant
$c_{0}\ge1$ to be chosen later (this parameter will be
used in the local lemma). To compute the number of neighbors of $A(e_1, e_2)$ 
in $\Gamma$ in the set $\mathcal{A}$, note that $A(e_{1},e_{2})$
is adjacent to $A(f_{1},f_{2})$ if and only if some two edges
$e_{i}$ and $f_{j}$ intersect. There are at most
three vertices in $e_{1}\cup e_{2}$,
each vertex has degree $(1+o(1))np$ in $G(n,p)$, and each edge has
at most $2\mu np$ other edges incompatible with it. Therefore
the number of neighbors of $A(e_{1},e_{2})$ in  $\mathcal{A}$ is at most
\[
3\cdot(1+o(1))np\cdot2\mu np=6\mu(np)^{2}(1+o(1)).
\]

\medskip

\noindent \textbf{Family $\mathcal{B}_{t}$} : Assume that $t_{0}\le t\le t_{1}$
and consider a fixed set $W$ of size $|W|=3t$. To bound the probability
of the event $B(W)$, first, choose $\frac{d}{3}|W|=d t$ edges among
the edges of $G(n,p)$ in $W$ (${e(W) \choose d t}$ choices), second, for each
chosen edge, select from which vertex and on which trial that edge
was chosen (at most $(2d)^{d t}$ choices altogether), third, compute the probability
that each choice became the particular edge of interest (probability
at most $\Big(\frac{1+o(1)}{np}\Big)^{d t}$). This gives
\[
\mathbf{P}(B(W))\le{e_G(W) \choose td}\cdot(2d)^{d t}\cdot\left(\frac{1+o(1)}{np}\right)^{d t}\le\left(\frac{e\cdot e_G(W)}{td}\cdot\frac{(2+o(1))d}{np}\right)^{d t},
\]
which by the assumption that $e_G(W)\le9t^{2}p\cdot\Big(\frac{n}{3t}\Big)^{1/2}$
(coming from Lemma~\ref{lem:property_g_n_p} (iii))
gives
\[
\mathbf{P}(B(W))\le\left(20e\cdot\Big(\frac{t}{n}\Big)^{1/2}\right)^{d t}.
\]
Define $y_{t}=e^{C_{1}t}\left(20e\cdot\Big(\frac{t}{n}\Big)^{1/2}\right)^{d t}$
for some positive constant $C_{1}$, and for later usage, note that
\begin{equation}
\sum_{t=t_{0}}^{t_{1}}|\mathcal{B}_{t}|\cdot y_{t}=\sum_{t=t_{0}}^{t_{1}}{n \choose 3t}\cdot
e^{C_{1}t}\left(20e\cdot\Big(\frac{t}{n}\Big)^{1/2}\right)^{d t}\le\sum_{t=t_{0}}^{t_{1}}\left(e^{C_{1}}\Big(\frac{en}{3t}\Big)^{3}\cdot(20e)^{d}\cdot\Big(\frac{t}{n}\Big)^{d/2}\right)^{t}=o(1),\label{eq:ll_b_union_bound}
\end{equation}
since $\alpha, d$, and $C_1$ satisfy $e^{C_{1}}(\frac{e}{3\alpha})^{3}\cdot(20e)^{d}\cdot\alpha^{d/2}<1$.

To compute the number of neighbors of $B(W)$ in 
$\mathcal{A}$ in $\Gamma$, note that the event $B(W)$ is adjacent to
$A(e_{1},e_{2})$ if $e_{1}$ or $e_{2}$ intersect $W$. Therefore,
the number of neighbors as above is at most
\[
|W|\cdot(1+o(1))np\cdot\mu np=(1+o(1))3\mu tn^{2}p^{2}.
\]

\medskip{}

\noindent \textbf{Family $\mathcal{C}_{t}$} : Assume that $t_{1}\le t\le n/4$
and consider a fixed pair of sets $X$ and $Y$ of sizes $|X|=t$
and $|Y|=n-3t$. For each vertex $v\in X$, let $d_G(v,Y)$ be the number
of neighbors of $v$ in $Y$ in $G(n,p)$. Then the probability that
$e_{R}(X,Y)=0$ is
\[
\mathbf{P}(C(X,Y))=\prod_{v\in X}\left(1-\frac{d_G(v,Y)}{(1+o(1))np}\right)^{d}
\le e^{-(1+o(1))d e_G(X,Y)/(np)}
\le e^{-d t(n-3t)/(3n)},
\]
where we used the assumption that $e_G(X,Y)\ge\frac{t(n-3t)}{2}p$. Define
$z_{t}=e^{C_{2}n}e^{-d t(n-3t)/(3n)}$ for some positive constant $C_{2}$,
and for later usage, note that
\begin{equation}
\sum_{t=t_{1}}^{t_{2}}|\mathcal{C}_{t}|\cdot z_{t}\le\sum_{t=t_{1}}^{t_{2}}2^{2n}\cdot e^{C_{2}n}\cdot e^{-d t(n-3t)/(3n)}=o(1),\label{eq:ll_c_union_bound}
\end{equation}
since $d\alpha(1-3\alpha)>3(C_{2}+2)$ and $d/16 >3(C_{2}+2)$.
For the degree of $C(X,Y)$ in $\mathcal{A}$ in $\Gamma$, we use
the crude bound
\[
|\mathcal{A}|\le n\cdot(1+o(1))np\cdot\mu np=(1+o(1))\mu n^{3}p^{2}.
\]

\medskip{}

To apply the local lemma, we must verify 
the following three inequalities (for appropriate choices of $t$
as determined by the sets $W$, $X$, and $Y$):
\begin{eqnarray*}
\mathbf{P}(A(e_{1},e_{2})) & \le & x\cdot(1-x)^{(1+o(1))6\mu (np)^{2}}\left(\prod_{t=t_{0}}^{t_{1}}(1-y_{t})^{|\mathcal{B}_{t}|}\right)\cdot\left(\prod_{t=t_{1}}^{n/4}(1-z_{t})^{|\mathcal{C}_{t}|}\right),\\
\mathbf{P}(B(W)) & \le & y_{t}\cdot(1-x)^{(1+o(1))3\mu tn^{2}p^{2}}\cdot\left(\prod_{t=t_{0}}^{t_{1}}(1-y_{t})^{|\mathcal{B}_{t}|}\right)\cdot\left(\prod_{t=t_{1}}^{n/4}(1-z_{t})^{|\mathcal{C}_{t}|}\right),\\
\mathbf{P}(C(X,Y)) & \le & z_{t}\cdot(1-x)^{(1+o(1))\mu n^{3}p^{2}}\cdot\left(\prod_{t=t_{0}}^{t_{1}}(1-y_{t})^{|\mathcal{B}_{t}|}\right)\cdot\left(\prod_{t=t_{1}}^{n/4}(1-z_{t})^{|\mathcal{C}_{t}|}\right),
\end{eqnarray*}
By (\ref{eq:ll_b_union_bound}) and (\ref{eq:ll_c_union_bound}),
we know that
\[
\left(\prod_{t=t_{0}}^{t_{1}}(1-y_{t})^{|\mathcal{B}_{t}|}\right)\cdot\left(\prod_{t=t_{1}}^{n/4}(1-z_{t})^{|\mathcal{C}_{t}|}\right)=1 - o(1).
\]
Recall that $y_{t}\ge e^{C_{1}t}\mathbf{P}(B(W))$ and $z_{t}\ge e^{C_{2}n}\mathbf{P}(C(X,Y))$.
Thus to have the above three inequalities, it suffices to prove that
\begin{eqnarray*}
1 & \le & (1+o(1))c_{0}\cdot(1-x)^{(1+o(1))6\mu (np)^{2}},\\
\forall t_{0}\le t\le t_{1},\,1 & \le & (1+o(1))e^{C_{1}t}(1-x)^{(1+o(1))3\mu tn^{2}p^{2}},\\
\forall t_{1}\le t\le t_{2},\,1 & \le & (1+o(1))e^{C_{2}n}(1-x)^{(1+o(1))\mu n^{3}p^{2}}.
\end{eqnarray*}

Note that $1-x=e^{-(1+o(1))x}$ and $x=c_{0}\cdot\left(\frac{2d}{np}\right)^{2}$.
Since $C_{1} \ge (1+o(1))12\mu d^2c_{0}$ and
$C_{2} \ge (1+o(1))4\mu d^{2}c_{0}$, 
the second and the third inequalities hold. Also, the first inequality
holds since the parameters are chosen so that
\[
c_{0}\cdot(1-x)^{(1+o(1))6\mu (np)^{2}}=c_{0}\cdot e^{-(1+o(1))24c_{0}d^{2}\mu}>1.
\qedhere
\]
\end{proof}

We now prove Lemma \ref{lem:sparse_extension} (restated here).

\begin{lem*}
For a positive constant $d$,
if $p\gg\frac{\log n}{n}$, then a.a.s. in $G=G(n,p)$, each pair of
subgraphs $(P,R)$ satisfying the conditions below has at
least $\frac{1}{64}n^{2}p$ boosters relative to it:
\vspace{-0.1cm}
\begin{itemize}
  \setlength{\itemsep}{1pt} \setlength{\parskip}{0pt}
  \setlength{\parsep}{0pt}
\item[(i)] $R$ is an $(\frac{n}{4},2)$-expander with $|E(R)| \le d n$, and
\item[(ii)] $P$ is a longest path in $P\cup R$.
\end{itemize}
\end{lem*}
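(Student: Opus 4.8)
The plan is to derive this from the deterministic statement of Lemma~\ref{lem:posa_rot_ext} by a union bound over all candidate pairs $(P,R)$. Think of $P$ and $R$ as fixed graphs on the vertex set $[n]$ (not yet depending on $G$), with $R$ having at most $dn$ edges and $P$ a path. For any such pair for which $R$ is in fact an $(\tfrac n4,2)$-expander and $P$ is a longest path in $P\cup R$, Lemma~\ref{lem:posa_rot_ext} with $k=\tfrac n4$ supplies a set $B=B(P,R)\subseteq\binom{[n]}{2}$ of at least $\tfrac{(n/4+1)^2}{2}>\tfrac{n^2}{32}$ boosters for $(P,R)$, and crucially this set depends only on $(P,R)$. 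So the lemma will follow once we show that a.a.s.\ $G=G(n,p)$ contains at least $\tfrac1{64}n^2p$ edges of $B(P,R)$, simultaneously for every relevant pair.

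First I would bound the failure probability for one fixed pair $(P,R)$. Set $B':=B(P,R)\setminus\bigl(E(P)\cup E(R)\bigr)$; since $|E(P)|\le n$ and $|E(R)|\le dn$, we have $|B'|\ge\tfrac{n^2}{32}-(d{+}1)n=(1-o(1))\tfrac{n^2}{32}$. The event that $R\subseteq G$, $P\subseteq G$, and $G$ contains fewer than $\tfrac1{64}n^2p$ edges of $B(P,R)$ is contained in the event that $G$ contains fewer than $\tfrac1{64}n^2p$ edges of $B'$, simply because $B'\subseteq B(P,R)$. But $B'$ is disjoint from $E(P)\cup E(R)$, so the number of edges of $G$ lying in $B'$ is a $\mathrm{Bi}(|B'|,p)$ random variable with mean $(1-o(1))\tfrac{n^2p}{32}$; since $\tfrac1{64}n^2p$ is, for large $n$, a constant factor below this mean, Chernoff's inequality bounds this probability by $e^{-\Omega(n^2p)}$. (The slack between $\tfrac1{32}$ and $\tfrac1{64}$ is precisely what absorbs the deletion of $E(P)\cup E(R)$ together with the Chernoff fluctuation.)

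Finally I would take a union bound. A graph $R$ on $[n]$ with at most $dn$ edges can be chosen in at most $(dn+1)\binom{\binom n2}{dn}\le e^{O(n\log n)}$ ways, and a path $P$ --- being an ordered sequence of distinct vertices --- in at most $n\cdot n!\le e^{O(n\log n)}$ ways, so there are only $e^{O(n\log n)}$ pairs to consider. Hence the probability that some pair $(P,R)$ satisfying (i) and (ii) has fewer than $\tfrac1{64}n^2p$ boosters in $G$ is at most $e^{O(n\log n)}\cdot e^{-\Omega(n^2p)}=o(1)$, using $p\gg\tfrac{\log n}{n}$, i.e.\ $n^2p\gg n\log n$. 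This last inequality is really the heart of the matter and the only delicate point: the two exponents match precisely at the Hamiltonicity threshold, which is why the hypothesis on $p$ is needed here. Everything else --- the deterministic booster count from Lemma~\ref{lem:posa_rot_ext}, the fact that $e_G(B')$ is an honest binomial because $B'$ avoids $E(P)\cup E(R)$, and the routine constant bookkeeping --- is standard.
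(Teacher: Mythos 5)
Your proof is correct and follows essentially the same route as the paper: fix an abstract pair $(P,R)$, use Lemma~\ref{lem:posa_rot_ext} to get $\ge \tfrac{n^2}{32}$ boosters in $K_n$, apply Chernoff to bound the failure probability by $e^{-\Omega(n^2p)}$, and finish with a union bound over the $e^{O(n\log n)}$ possible pairs using $n^2p\gg n\log n$. The one cosmetic difference is your step of passing from $B(P,R)$ to $B'=B(P,R)\setminus(E(P)\cup E(R))$ so that the count is an unconditioned binomial; the paper simply never conditions on $P,R\subseteq G$ (the union bound ranges over all abstract pairs, which dominates ranging over subgraph pairs), so that pruning step is unnecessary — though it is harmless and arguably makes the independence cleaner to see.
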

\begin{proof}
To prove the lemma, we first fix a pair $(P,R)$ satisfying the conditions
given above, and estimate the probability that $G(n,p)$ contains
enough boosters for the pair.

Since $R$ is an $(\frac{n}{4},2)$-expander, Lemma \ref{lem:posa_rot_ext}
implies that $K_n$ contains at least $\frac{n^2}{32}$ boosters
for the pair $(P,R)$.
It thus follows that the expected number of boosters for $(P,R)$ in $G(n,p)$ is at least
$\frac{1}{32}n^{2}p.$ By Chernoff's inequality, with probability
at least $1-e^{-\Omega(n^{2}p)}$, we have at least $\frac{1}{64}n^{2}p$
boosters for $(P,R)$ in $G(n,p)$.

We use this estimate on the probability together with the union bound
to prove the lemma.
The total number of paths of all possible length is at most $n\cdot n!\le e^{n\log n}$,
and the total number of graphs $R$ that we must consider is at most
\[
{n^{2} \choose d n}\le e^{d n\log n}.
\]
Since $p\gg\frac{\log n}{n}$, we obtain our conclusion
by taking the union bound.
\end{proof}

To prove Theorem \ref{thm:subramsey}, given a globally $\mu np$-bounded edge coloring of $G(n,p)$, call a pair of edges {\em compatible} if they are of different color, and a subgraph $H \subseteq G$ {\em compatible} 
if it is rainbow. Re-define the events $A(e_1, e_2)$ accordingly.  
One can easily check that the proof given in this section establishes Theorem \ref{thm:subramsey}
after slightly changing the method used in estimating the degree in the dependency graph.
We omit the straightforward details.

\section{Proof of Theorem~\ref{thm:dense_p}} \label{sec:second}

In this section we present the proof of our second result which is based on several ideas.
First we use a strategy from \cite{AlGu} to transform the problem of finding a compatible Hamilton
cycle into a problem of finding a directed Hamilton cycle in an appropriately defined auxiliary graph.
This strategy requires a `well-behaved' perfect matching in our graph, which will be taken
using a `nibble method'.
Finally to complete the proof we use recent
resilience-type results on Hamiltonicity of random directed graphs proved in \cite{HeStSu} and \cite{FeNeNoPeSk}.

Before we delve into the (rather technical) details of the proof, let us provide a brief outline of our argument. Let $G=G(n,p)$ with $p \gg \frac{\log^8 n}{n}$. Assume a $\mu np$-bounded incomparability system  $\mathcal{F}$ over $G$ is given, and our aim is to find a Hamilton cycle in $G$ compatible with  $\mathcal{F}$. Assume for simplicity $n$ is even. Let $V=A\cup B$ be a random equipartition of $V(G)$ with $|A|=|B|=m=\frac{n}{2}$, and let $M$ be a randomly chosen perfect matching between $A$ and $B$ in $G$. 
Let $M=\{e_1,\ldots,e_{n/2}\}$, with $e_i=(a_i,b_i)$, $a_i\in A,b_i\in B$. We construct a Hamilton cycle
by further adding $n/2$ edges to $M$, while obeying compatibility. Define an auxiliary directed graph $D_G(M)$ as follows: its vertices are the edges of $M$, and $(e_i,e_j)$ is a directed edge of $D_G(M)$ if $\{b_i,a_j\}\in E(G)$. Since the edges of $D_G(M)$ are in one-to-one correspondence with the edges of $G$ between $A$ and $B$ outside $M$, we may consider $D_G(M)$ as a random directed graph on $m$ vertices with edge probability $p$.
Observe that a directed Hamilton cycle in $D_G(M)$ translates into a Hamilton cycle in $G$ in an obvious way.
To obtain a Hamilton cycle compatible with $\mathcal{F}$ through this correspondence, we remove some edges from $D_G(M)$. Consider an edge $e_i$ of $M$ (in its capacity as a vertex of $D_G(M)$). Let us see which directed edges $(e_i,e_j)$ leaving  $e_i$ in $D_G(M)$ need to be deleted. Those are edges for which $\{b_i,a_j\}$ is incompatible with $e_i$ according to  $\mathcal{F}$, and the number of such edges should be at most (about) $\mu mp$. In addition, we need to delete $(e_i,e_j)$ for which $\{b_i,a_j\}$ is compatible with $e_i$  (about $(1-\mu)$ proportion of edges) but incompatible with $e_j$ -- and the proportion of such edges should be about $\mu$. We expect these heuristic estimates to hold due to our random choice of $M$. Assuming these estimates, altogether we need to delete from $D_G(M)$ about $\mu mp+(1-\mu)\mu mp$ edges leaving $e_i$, and a similar amount of edges entering $e_i$. As mentioned above, the graph $D_G(M)$ is basically a random directed graph on $m$ vertices with edge probability $p$. At this stage, we invoke a recent result of Ferber et al. 
(\cite{FeNeNoPeSk}; Theorem \ref{thm:di_resilience} below), stating that if $p\gg \frac{\log^8 n}{n}$ then a random directed graph $D=D(n,p)$ is a.a.s. such that every subgraph of $D$ of minimum in- and out-degrees at least $(\frac{1}{2}+\varepsilon)np$ contains a directed Hamilton cycle. In order to able to apply this theorem to $D_G(M)$ we need to estimate from above the deleted in- and out-degrees at every vertex $e_i$ of $D_G(M)$ -- as we indicated above, and then to require that $\mu mp +(1-\mu)\mu mp\le (\frac{1}{2}-\varepsilon)mp$. So essentially we need to solve: $\mu+(1-\mu)\mu=\frac{1}{2}$ -- which gives us $\mu=1-\frac{1}{\sqrt{2}}$. It should be mentioned that this approach borrows some ideas from the argument of Alon and Gutin \cite{AlGu}, who also arrived at the same magical constant  $ 1-\frac{1}{\sqrt{2}}$ (but for the simpler case of the complete graph to start with).

We start with the following two definitions.

\begin{defn} \label{def:pm}
A \emph{perfect matching} on $n$ vertices over a partition
$A \cup B$ (for even $n$) or $A \cup B \cup \{v_*\}$ (for odd $n$),
with $|A|=|B|=\lfloor n/2\rfloor$, is a collection $\{(a_{i},b_{i})\}_{i=1}^{\lfloor n/2\rfloor}$
of disjoint pairs of vertices with $a_{i}\in A$ and $b_{i}\in B$ with the
following property:
\vspace{-0.1cm}
\begin{itemize}
  \setlength{\itemsep}{1pt} \setlength{\parskip}{0pt}
  \setlength{\parsep}{0pt}
\item[(i)] if $n$ is even, then $\{a_{i},b_{i}\}$ is an edge for every $i$,
and
\item[(ii)] if $n$ is odd, then $\{a_{i},b_{i}\}$ is an edge for $i=1,2,\cdots,(n-3)/2$,
and $(a_{(n-1)/2},v_{*},b_{(n-1)/2})$ is a path of length 2.
\end{itemize}
\end{defn}
In most cases, for a given vertex $a_{i}$, the edge incident to $a_{i}$
in the perfect matching is $\{a_{i},b_{i}\}$. However, when $n$
is odd and $i=(n-1)/2$, the edge incident to $a_{(n-1)/2}$ is $\{a_{(n-1)/2},v_{*}\}$.
This distinction is made for technical reasons and we recommend the
reader to assume that $n$ is even for the first time reading.
\begin{defn}
\label{def:digraph}Let $G$ be a graph and let $M$ be a perfect matching
$\{e_{i}\}_{i=1}^{\lfloor n/2\rfloor}$ for $e_{i}=(a_{i},b_{i})$ (and
$e_{\lfloor n/2\rfloor} = (a_{\lfloor n/2\rfloor}, v_*, b_{\lfloor n/2\rfloor})$ if $n$ is odd).
\vspace{-0.2cm}
\begin{itemize}
  \setlength{\itemsep}{1pt} \setlength{\parskip}{0pt}
  \setlength{\parsep}{0pt}
\item[(i)] Define $D_{G}(M)$ as the directed graph over the
vertex set $\{e_{1},e_{2},\cdots,e_{\lfloor n/2\rfloor}\}$,
where there is a directed edge from $e_{i}$ to $e_{j}$ if and only if
$\{b_i, a_j\} \in E(G)$.
\item[(ii)] For a given incompatibility system $\mathcal{F}$ over $G$, define
$D_{G}(M;\mathcal{F})$ as the subgraph of $D_{G}(M)$ obtained by
the following process: remove the directed edge $(e_{i},e_{j})$ whenever
$\{b_{i},a_{j}\}$ is incompatible with the edge of $M$ incident to
$b_{i}$, or with the edge of $M$ incident to $a_{j}$. Moreover, if $n$
is odd and $\{a_{\lfloor n/2\rfloor},v_{*}\}$ and $\{v_{*},b_{\lfloor n/2\rfloor}\}$ are
not compatible, then remove all edges in $D_{G}(M)$ incident to the
vertex $e_{\lfloor n/2\rfloor}$.
\end{itemize}
\end{defn}
We may also write $D(M)$ or $D(M;\mathcal{F})$, when $G$ is clear
from the context. The following proposition explains the motivation
behind the definition given above.

\begin{prop} \label{prop:compatible_to_digraph}
Let $G$ be a graph with an incompatibility
system $\mathcal{F}$, and let $M$ be a perfect matching in $G$. If
$D_{G}(M;\mathcal{F})$ contains a directed Hamilton cycle, then $G$
contains a Hamilton cycle compatible with $\mathcal{F}$.
\end{prop}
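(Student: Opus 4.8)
The plan is to take a directed Hamilton cycle in $D_G(M;\mathcal{F})$ and ``unfold'' it into a Hamilton cycle of $G$ via the natural correspondence: vertices of $D_G(M)$ are edges (or, in the odd case, length-$2$ paths through $v_*$) of $M$, and directed edges of $D_G(M)$ are the $A$--$B$ edges of $G$ lying outside $M$. The only real work is to check that the resulting subgraph of $G$ is indeed a single spanning cycle and that it is compatible with $\mathcal{F}$; this is a definition-chase rather than a computation.

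Concretely, I would set $m=\lfloor n/2\rfloor$ and write the given directed Hamilton cycle as $e_{i_1}\to e_{i_2}\to\cdots\to e_{i_m}\to e_{i_1}$, where $i_1,\dots,i_m$ is a permutation of $1,\dots,m$. For each $k$ (indices mod $m$), Definition~\ref{def:digraph}(i) gives $\{b_{i_k},a_{i_{k+1}}\}\in E(G)$; call this the \emph{link edge} $\ell_k$. Let $C$ be the subgraph of $G$ consisting of the link edges $\ell_1,\dots,\ell_m$ together with, for each $k$, the matching edge $\{a_{i_k},b_{i_k}\}$ (replaced by the two path edges $\{a_m,v_*\}$ and $\{v_*,b_m\}$ when $n$ is odd and $i_k=m$). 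Each vertex of $G$ --- every $a_i$, every $b_i$, and $v_*$ if $n$ is odd --- lies on exactly one such matching edge or path, and the link edges glue these consecutive pieces together following the cyclic order $i_1,\dots,i_m$; hence $C$ is connected, $2$-regular and spanning, i.e. a Hamilton cycle of $G$. (Here $m\ge2$, since $D_G(M)$ admits a directed Hamilton cycle.)

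The heart of the argument is verifying compatibility, i.e. that for every vertex $v$ the two edges of $C$ at $v$ do not form a pair of $F_v$. At a vertex $b_{i_k}$ the two incident edges of $C$ are the link edge $\ell_k=\{b_{i_k},a_{i_{k+1}}\}$ and the $M$-edge incident to $b_{i_k}$ (namely $\{a_{i_k},b_{i_k}\}$, or $\{v_*,b_m\}$ in the odd special case); since the directed edge $(e_{i_k},e_{i_{k+1}})$ survived the deletion in Definition~\ref{def:digraph}(ii), it is precisely guaranteed that $\{b_{i_k},a_{i_{k+1}}\}$ is compatible with the $M$-edge incident to $b_{i_k}$. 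Symmetrically, at a vertex $a_{i_{k+1}}$ the two incident edges of $C$ are $\ell_k$ and the $M$-edge incident to $a_{i_{k+1}}$, and survival of $(e_{i_k},e_{i_{k+1}})$ again forces compatibility. Finally, if $n$ is odd, the two edges of $C$ at $v_*$ are $\{a_m,v_*\}$ and $\{v_*,b_m\}$; were they incompatible, Definition~\ref{def:digraph}(ii) would have deleted \emph{all} edges of $D_G(M)$ incident to $e_m$, contradicting that $e_m$ lies on a directed Hamilton cycle, so they are compatible. Since every pair of consecutive edges of $C$ is compatible, $C$ is a Hamilton cycle of $G$ compatible with $\mathcal{F}$.

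I do not expect a genuine obstacle: the statement is essentially built into the definition of $D_G(M;\mathcal{F})$. The only points requiring care are the bookkeeping for odd $n$, where one vertex of $D_G(M)$ corresponds to the length-$2$ path $(a_m,v_*,b_m)$ rather than a single matching edge, and correspondingly using the right ``$M$-edge incident to $a_m$'' ($=\{a_m,v_*\}$) and ``$M$-edge incident to $b_m$'' ($=\{v_*,b_m\}$), exactly as flagged in the remark following Definition~\ref{def:pm}.
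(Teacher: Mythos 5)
Your proof is correct and follows essentially the same route as the paper's: unfold the directed Hamilton cycle via the definitional correspondence, check compatibility at each vertex using the edge-deletion rule in Definition~\ref{def:digraph}(ii), and handle the odd-$n$ case by noting that incompatibility of $\{a_m,v_*\}$ with $\{v_*,b_m\}$ would have isolated $e_m$. The only cosmetic difference is that you carry an explicit permutation $i_1,\dots,i_m$ where the paper relabels ``without loss of generality,'' and you spell out the check that $C$ is a spanning $2$-regular connected subgraph a bit more carefully.
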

\begin{proof}
Let $M$ be given as $e_{1},e_{2},\ldots,e_{k}$, and without loss
of generality let $(e_{1},e_{2},\ldots,e_{k},e_{1})$ be the Hamilton
cycle in $D_{G}(M;\mathcal{F})$.

If $G$ has an even number of vertices, then by the definition of $D_{G}(M;\mathcal{F})$,
we see that
\[
(a_{1},b_{1},a_{2},b_{2},\ldots,a_{k},b_{k},a_{1})
\]
 is a Hamilton cycle in $G$. Moreover, the edge $\{a_{i},b_{i}\}$
is compatible with both $\{b_{i},a_{i+1}\}$ and $\{a_i, b_{i-1}\}$
(addition and subtraction of indices are modulo $k$)
by the definition of $D_{G}(M;\mathcal{F})$. Therefore, we found
a Hamilton cycle in $G$ compatible with $\mathcal{F}$.

If $G$ has an odd number of vertices, then as before, we see
that $(a_{1},b_{1},a_{2},b_{2},\ldots,a_{k},v_{*},b_{k},a_{1})$
is a Hamilton cycle in $G$. If $\{a_{k},v_{*}\}$ and $\{v_{*},b_{k}\}$
were not compatible, then by the definition of $D_{G}(M;\mathcal{F})$,
the vertex $e_{k}$ must be isolated in $D_{G}(M;\mathcal{F})$, contradicting
the fact that $D_{G}(M;\mathcal{F})$ is Hamiltonian. Therefore, the
pair is compatible. All other pairs are compatible as seen above.
\end{proof}

We prove the Hamiltonicity of $D_{G}(M;\mathcal{F})$ by carefully choosing
a perfect matching $M$ so that it satisfies the following two properties.

\begin{defn}
Let $\varepsilon$ be a fixed positive real, let $G$ be a given graph
with incompatibility system $\mathcal{F}$, and let $M$ be a
perfect matching in $G$.
\vspace{-0.2cm}
\begin{itemize}
  \setlength{\itemsep}{1pt} \setlength{\parskip}{0pt}
  \setlength{\parsep}{0pt}
\item[(i)] The pair $(G,M)$ is \emph{$\varepsilon$-di-ham-resilient} if
every subgraph of $D_{G}(M)$ of minimum in- and out-degrees
at least $(\frac{1}{2}+\varepsilon)\frac{\delta(G)}{2}$
contains a directed Hamilton cycle.
\item[(ii)] The triple $(G,M,\mathcal{F})$ is \emph{$\varepsilon$-typical}
if $D_{G}(M;\mathcal{F})$ has minimum in- and out-degrees at least
$\left(\frac{1}{2}+\varepsilon\right)\frac{\delta(G)}{2}$.
\end{itemize}
\end{defn}

The following lemma is the key ingredient of our proof, asserting the a.a.s.~existence of a
perfect matching in $G(n,p)$ for which
$(G,M)$ is $\varepsilon$-di-ham-resilient, and $(G,M,\mathcal{F})$
is $\varepsilon$-typical.

\begin{lem} \label{lem:gnp_resilient_typical}
Let $\varepsilon$ be a fixed positive real, and $p = \frac{\omega \log^8 n}{n}$
for some function $\omega = \omega(n) \le \log n$ that tends to infinity.
Then $G=G(n,p)$ a.a.s.~has the following property:
for every $\Big(1 - \frac{1}{\sqrt{2}} - 2\varepsilon\Big)np$-bounded
incompatibility system $\mathcal{F}$
over $G$, there exists a perfect matching $M \subseteq G$ such that
\vspace{-0.2cm}
\begin{itemize}
  \setlength{\itemsep}{1pt} \setlength{\parskip}{0pt}
  \setlength{\parsep}{0pt}
\item[(i)] $(G,M)$ is $\varepsilon$-di-ham-resilient and
\item[(ii)] $(G,M,\mathcal{F})$ is $\varepsilon$-typical.
\end{itemize}
\end{lem}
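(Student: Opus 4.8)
The plan is to choose $M$ in two independent stages: first select a random equipartition $V(G)=A\cup B$ (plus an extra vertex $v_*$ when $n$ is odd), then select $M$ uniformly at random among perfect matchings of $G[A,B]$. The point is that conditioning on a typical $A\cup B$, the bipartite graph $G[A,B]$ is itself essentially $G(m,m,p)$ with $m=\lfloor n/2\rfloor$, and $D_G(M)$ — whose arc set is in bijection with $E(G[A,B])\setminus M$ — is distributed as (a random perturbation of) the random digraph $D(m,p)$. I would isolate the two required properties and handle them separately, since (i) depends only on $G$ and $M$ (not on $\mathcal F$), while (ii) must hold simultaneously for every bounded $\mathcal F$.

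For part (i), I would invoke the resilience result for random digraphs of Ferber et al.\ (Theorem \ref{thm:di_resilience}, stated below) which applies since $p\gg\frac{\log^8 m}{m}$. The only subtlety is that $D_G(M)$ is not literally $D(m,p)$: its non-arcs include the $m$ ``forbidden'' slots $(e_i,e_i)$ coming from the matching edges, and more importantly the arc set is determined by $G[A,B]$ together with the random choice of $M$. I would argue that one may first expose $G[A,B]$, note that a.a.s.\ it contains a perfect matching, then expose $M$; since the resilience property is monotone (removing the at most $m$ arcs $(e_i,e_i)$ and re-indexing only shrinks the digraph, and a Hamilton-resilient digraph stays so under any such bounded deletion absorbed into the $\varepsilon$ slack), the conclusion for $D(m,p)$ transfers. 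Using $\delta(G)=(1+o(1))np = (1+o(1))\cdot 2mp$ from Lemma \ref{lem:property_g_n_p}(i), the threshold $(\frac12+\varepsilon)\frac{\delta(G)}{2}=(\frac12+\varepsilon+o(1))mp$ is exactly what the cited theorem controls.

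For part (ii), fix a vertex $e_i$ and estimate the out-degree lost in passing from $D_G(M)$ to $D_G(M;\mathcal F)$. An arc $(e_i,e_j)$ is deleted if $\{b_i,a_j\}$ is incompatible with the $M$-edge at $b_i$, or with the $M$-edge at $a_j$. The first type: at most $\mu np$ of the $\le np$ neighbors of $b_i$ in $A$ can form an edge incompatible with $e_i$'s matching edge at $b_i$ — this bound is deterministic from $\Delta$-boundedness, giving at most $(1-\tfrac1{\sqrt2}-2\varepsilon)np \cdot$ loss, i.e.\ $(1-\tfrac1{\sqrt2}-2\varepsilon)\cdot 2mp$ arcs. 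The second type: among the $\ge (1-\mu-o(1))np$ neighbors $a_j$ of $b_i$ in $A$ surviving the first cut, we lose those for which the random $M$-edge at $a_j$ (chosen essentially uniformly among $a_j$'s neighbors) happens to pair $a_j$ with a $b_\ell$ making $\{b_i,a_j\}$ incompatible at $a_j$; each such $a_j$ is lost with probability at most $\mu np / ((1-o(1))np) = (1+o(1))\mu$. I would bound the number of second-type losses by a sum of (weakly dependent) indicator variables, apply a concentration inequality — either a direct Chernoff/McDiarmid bound using the switching/bounded-differences structure of a uniformly random bipartite perfect matching, or a simpler first-moment-plus-Markov argument over the union bound — to show it is a.a.s.\ at most $(1+\varepsilon')\mu\cdot np \le (1-\mu)\mu np + \varepsilon' np$ up to lower-order terms. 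Adding the two, the deleted out-degree at $e_i$ is a.a.s.\ at most $\big(\mu + (1-\mu)\mu + O(\varepsilon)\big)np$; by the choice $\mu = 1-\tfrac1{\sqrt2}-2\varepsilon$ one has $\mu+(1-\mu)\mu = \tfrac12 - c\varepsilon$ for some $c>0$, so the surviving out-degree is at least $(\tfrac12+\varepsilon+o(1))np = (1+o(1))(\tfrac12+\varepsilon)\tfrac{\delta(G)}{2}$, as required; symmetrically for in-degrees. Taking a union bound over the $m$ vertices $e_i$ and both directions finishes part (ii).

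The main obstacle is making the probabilistic estimate in part (ii) uniform over \emph{all} $\Delta$-bounded incompatibility systems $\mathcal F$ simultaneously — there are too many systems for a naive union bound. I expect to circumvent this by exposing $M$ first and $\mathcal F$ last: once $M$ is fixed, for each vertex $a_j$ only its matching partner $b_{\sigma(j)}$ matters, and the event ``$\{b_i,a_j\}$ incompatible at $a_j$'' depends on $\mathcal F$ only through the pair $\{\{b_i,a_j\},\{a_j,b_{\sigma(j)}\}\}$; thus the \emph{count} of second-type losses at $e_i$ is, for every $\mathcal F$, dominated by the number of $j$ with $b_{\sigma(j)}$ in a fixed $\le\mu np$-subset of $N(a_j)$, and this worst-case count is a function of $M$ alone that I can control in probability before $\mathcal F$ enters. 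The remaining care is to verify the concentration for a uniformly random perfect matching of a (pseudo)random bipartite graph; here I would either quote a standard bounded-differences inequality for random matchings or, more robustly, re-expose $M$ as a sequence of near-uniform independent-ish choices using the $\log^8 n$ slack in $p$ to guarantee enough available partners at each step.
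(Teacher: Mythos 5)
Your proposal has a genuine gap in part (i). You propose to take $M$ uniformly at random among the perfect matchings of $G[A,B]$ and then argue that $D_G(M)$ is distributed like $D(m,p)$ (up to a benign perturbation), so that Theorem~\ref{thm:di_resilience} applies. That distributional claim is false. Conditioned on a \emph{fixed} matching $M_0$ being present, the non-$M_0$ crossing edges of $G[A,B]$ are indeed i.i.d.\ $\mathrm{Ber}(p)$; but if $M$ is drawn uniformly among perfect matchings of $G[A,B]$, then the event $\{M=M_0\}$ is positively correlated with $G[A,B]$ having \emph{few} perfect matchings, which biases the remaining edges downward, and $D_G(M)\mid M$ is no longer $D(m,p)$. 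Your alternative reading --- expose $G[A,B]$ first, then $M$ --- does not rescue this: once $G[A,B]$ is fixed, $D_G(M)$ is a deterministic object up to relabelling, and the ``monotone/re-indexing'' remark is beside the point; you would need resilience to hold for \emph{every} perfect matching of $G[A,B]$, which Theorem~\ref{thm:di_resilience} does not give and which a union bound over matchings cannot deliver. The paper avoids this with a sprinkling decomposition: write $G=G_1\cup G_2$ with $G_1\sim G(n,\tfrac{\varepsilon}{4}p)$ and $G_2\sim G(n,p_2)$ independent, build $M$ as a function of $G_1$ only (the nibble on the $\tfrac{\varepsilon}{4}$-sparsified bisection), and then note that conditionally on $M$ the $G_2$-arcs of $D_{G_2}(M)$ really are i.i.d.\ $\mathrm{Ber}(p_2)$, so Theorem~\ref{thm:di_resilience} applies to $D_{G_2}(M)$; re-adding the $G_1$-arcs changes each in-/out-degree by at most $\Delta(G_1)=O(\varepsilon np)$, which is absorbed into the $\varepsilon$-slack.

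On part (ii), two remarks. Your worry about a union bound over all $\mathcal F$ is a non-issue: the quantifier order in the lemma is ``for every $\mathcal F$, a.a.s.\ there exists $M$,'' and the paper proves that \emph{for each fixed} $\mathcal F$ the random $M=\Phi(G)$ is a.a.s.\ $\varepsilon$-typical, so $M$ is allowed to depend on $\mathcal F$ and no enumeration of systems is needed. However, the concentration step you sketch --- that a uniformly random perfect matching of a random bipartite graph makes, for each $e_i$, the number of second-type (``bad'') losses concentrate tightly around $\mu$ times the neighbourhood size --- is the genuinely hard part, and ``bounded differences for random matchings'' is not enough on its own to get the $\mu+(1-\mu)\mu$ bound with only $O(\varepsilon)$ slack. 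The paper circumvents exactly this by replacing the uniform matching with an explicit nibble: in each of $T=\Theta(\delta^{-1})$ rounds a matching $M_i$ is formed by independent Bernoulli edge-selections plus conflict removal, the resulting independence lets Lemma~\ref{lem:chosen_edge_control} control $|X\cap M_i|$ and the number of $A$-/$B$-bad edges landing in $M_i$, and the ``normality'' invariant of Lemma~\ref{lem:induction_step} propagates these estimates across rounds. Summing over rounds gives the final degree bound. If you want to keep the uniform-matching picture, you would have to supply a substitute for this round-by-round independence, which is precisely what the nibble provides.
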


Next lemma allows us to restrict our attention to small values of $p$ as
in Lemma~\ref{lem:gnp_resilient_typical}. Its proof will be given in the following subsection.

\begin{lem} \label{lem:prob_transfer}
Suppose that $p_1$ and $p_2$ satisfying $1 \ge p_1 \ge p_2 \gg \frac{\log n}{n}$ are given.
If there exist positive real numbers $\alpha$ and $\varepsilon$ such that
$G(n,p_2)$ a.a.s.~contains a compatible Hamilton cycle for every
$(\alpha+2\varepsilon)np_2$-bounded incompatibility system, then
$G(n,p_1)$ a.a.s.~contains a compatible Hamilton cycle for every
$(\alpha+\varepsilon)np_1$-bounded incompatibility systems.
\end{lem}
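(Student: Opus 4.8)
The plan is to exploit monotonicity in $p$ via the standard sub-sampling coupling. We may assume $p_1>p_2$. Put $q:=p_2/p_1\in(0,1)$, let $G:=G(n,p_1)$, and let $H$ be obtained from $G$ by keeping each edge independently with probability $q$; then $H$ has exactly the distribution $G(n,p_2)$ and $H\subseteq G$ on the same vertex set. Write $\Delta_1:=(\alpha+\varepsilon)np_1$ and $\Delta_2:=(\alpha+2\varepsilon)np_2$, and call a graph \emph{$\Delta$-good} if every $\Delta$-bounded incompatibility system over it admits a compatible Hamilton cycle. The hypothesis states that $G(n,p_2)$ is a.a.s.\ $\Delta_2$-good, and we must prove that $G(n,p_1)$ is a.a.s.\ $\Delta_1$-good. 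The crucial feature of the coupling is that the incompatibility system is chosen \emph{before} the sub-sampling, so that its restriction to $H$ behaves like a binomial thinning.

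Let $\mathcal B$ be the event, depending only on $G$, that $G$ is not $\Delta_1$-good; on $\mathcal B$ fix (via any selection rule — the relevant spaces are finite) a $\Delta_1$-bounded incompatibility system $\mathcal F_0=\mathcal F_0(G)$ over $G$ having no compatible Hamilton cycle. I claim that on $\mathcal B$, whatever the outcome of the sub-sampling, either $H$ is not $\Delta_2$-good or the restriction $\mathcal F_0|_H$ (the system keeping only those incompatible pairs both of whose edges lie in $H$) is not $\Delta_2$-bounded. Indeed, if neither failed, then $H$ would contain a Hamilton cycle compatible with $\mathcal F_0|_H$; this cycle is spanning, is contained in $G$, and has all of its consecutive edge-pairs inside $H$, hence it is a Hamilton cycle of $G$ compatible with $\mathcal F_0$ — contradicting the choice of $\mathcal F_0$. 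Therefore
\[
\BFP(\mathcal B)\ \le\ \BFP\big(H\text{ not }\Delta_2\text{-good}\big)\ +\ \BFP\big(\mathcal B\cap\{\mathcal F_0|_H\text{ not }\Delta_2\text{-bounded}\}\big),
\]
and the first term is $o(1)$ since $H\sim G(n,p_2)$.

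To bound the second term it suffices, after conditioning on $G=g$ for $g\in\mathcal B$, to show $\BFP\big(\mathcal F_0(g)|_H\text{ not }\Delta_2\text{-bounded}\mid G=g\big)=o(1)$ uniformly in $g$. Fix a vertex $v$ and an edge $e$ of $g$ at $v$, and let $S_{v,e}$ be the set of edges of $g$ at $v$ incompatible with $e$ in $\mathcal F_0(g)$, so $|S_{v,e}|\le\Delta_1$. Conditioned on $G=g$, each edge of $g$ survives into $H$ independently with probability $q$, so $|S_{v,e}\cap E(H)|$ is stochastically dominated by a $Bi(\Delta_1,q)$ random variable, whose mean equals $\Delta_1 q=(\alpha+\varepsilon)np_2$. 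Since $\Delta_2=(1+\frac{\varepsilon}{\alpha+\varepsilon})(\alpha+\varepsilon)np_2$ exceeds this mean by a fixed multiplicative factor, Chernoff's inequality gives $\BFP\big(|S_{v,e}\cap E(H)|>\Delta_2\big)\le e^{-\Omega(np_2)}$, with the implied constant depending only on $\alpha$ and $\varepsilon$. There are at most $n^2$ pairs $(v,e)$ to consider and $np_2\gg\log n$, so a union bound shows that with probability $1-o(1)$ every such pair satisfies $|S_{v,e}\cap E(H)|\le\Delta_2$, i.e.\ $\mathcal F_0(g)|_H$ is $\Delta_2$-bounded. Combining the two bounds gives $\BFP(\mathcal B)=o(1)$, which is the assertion of the lemma.

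The one point requiring care is the order in which the random objects are revealed: the restriction $\mathcal F_0|_H$ is under control only because $\mathcal F_0$ is fixed before $H$ is sampled, which is why we argue through the event $\mathcal B$ together with a pre-selected witness $\mathcal F_0(G)$ rather than by naively ``restricting a given incompatibility system to $G(n,p_2)$'' (the latter would force the useless worst-case bound $|S_{v,e}\cap E(H)|\le\min(\deg_H v,\Delta_1)$). Everything else is a routine Chernoff-and-union-bound computation.
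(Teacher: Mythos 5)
Your proof is correct, and it takes a route that is genuinely different in structure from the paper's, though built from the same ingredients (the sub-sampling coupling and a Chernoff--plus--union--bound estimate on the thinned incompatibility system). The paper argues through an auxiliary class $\mathcal{R}_1$ of ``nice'' graphs for $G_1$: by Lemma~\ref{lem:two_space_to_one} (or Markov), $G_1$ a.a.s.\ lies in the set of graphs for which $\BFP(G_2\in\mathcal R\mid G_1)\geq\frac12$; then, for each \emph{fixed} $(\alpha+\varepsilon)np_1$-bounded $\mathcal F$, the induced system on $G_2$ is bounded with conditional probability $>\frac12$; intersecting the two sub-probability-$\frac12$ events yields a \emph{positive}-probability realization $G_2'$ for which both hold, and this single realization then furnishes the compatible cycle for $\mathcal F$. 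You instead work directly with the bad event $\mathcal B$ for $G_1$, pre-select a witness $\mathcal F_0(G)$ (legitimate, as the spaces are finite), observe that on $\mathcal B$ one of two failure modes must occur, and bound each of the two by $o(1)$ --- making the Chernoff estimate do slightly more work (probability $1-o(1)$ rather than merely $>\frac12$) so that no two-space lemma and no ``for every $\mathcal F$, some $G_2'$ works'' quantifier juggling is needed. Since $np_2\gg\log n$ gives $n^2 e^{-\Omega(np_2)}=o(1)$, your stronger Chernoff conclusion is indeed available, so the argument closes. The trade-off is purely expository: your version is self-contained (it bypasses Lemma~\ref{lem:two_space_to_one}) and phrases the transfer as a union bound over failure modes, whereas the paper's version is shorter modulo the auxiliary lemma it has already set up for later reuse in the proof of Lemma~\ref{lem:gnp_r_t_strong}.
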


The proof of Theorem \ref{thm:dense_p} easily follows from Lemma \ref{lem:gnp_resilient_typical}.

\begin{proof}[Proof of Theorem \ref{thm:dense_p}]
It suffices to prove the statement for $p = \frac{\omega \log^8 n}{n}$
for some $\omega = \omega(n) \le \log n$ that tends to infinity since larger
edge probabilities can be handled by Lemma \ref{lem:prob_transfer} below.
Let $\varepsilon$ be a given positive real and suppose that $G=G(n,p)$
satisfies the properties guaranteed by Lemma \ref{lem:gnp_resilient_typical}:
for every $(1-\frac{1}{\sqrt{2}} - 2\varepsilon)n$-bounded incompatibility system
$\mathcal{F}$ over $G$, there exists a perfect matching
$M$ for which the pair $(G,M)$ is $\varepsilon$-di-ham-resilient
and $(G,M,\mathcal{F})$ is $\varepsilon$-typical. These two properties imply that
$D_{G}(M;\mathcal{F})$ contains a directed Hamilton cycle, which by
Proposition \ref{prop:compatible_to_digraph} implies that $G$ contains a Hamilton
cycle compatible with $\mathcal{F}$.
\end{proof}

\subsection{Preliminaries}

Before proceeding to the proof of Lemma \ref{lem:gnp_resilient_typical},
we state some results  needed for our proof.
Let $D(n,p)$ be a random directed graph on $n$ vertices, in which for every pair $i \not= j$ the edge $i \rightarrow j$ appears independently with probability $p$.
The first theorem is a resilience-type result for Hamiltonicity of $D(n,p)$
which extends a classical result of Ghouila-Houri \cite{Ghouila}. It was first proved by
Hefetz, Steger, and Sudakov \cite{HeStSu} (for edge probabilities $p \geq n^{-1/2+o(1)}$) and then strengthened by Ferber, Nenadov, Noever, Peter, and Skoric
\cite{FeNeNoPeSk} to much smaller values of $p(n)$.

\begin{thm} \label{thm:di_resilience}
For all fixed positive reals $\varepsilon$, if $p\gg \frac{\log^8 n}{n}$
then $D(n,p)$ a.a.s.~has the following property:
every spanning subgraph of $D(n,p)$ of minimum in- and out-degrees
at least $(\frac{1}{2}+\varepsilon)np$ contains a directed Hamilton
cycle.
\end{thm}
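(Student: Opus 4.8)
The statement to be proved is Theorem~\ref{thm:di_resilience}: for every fixed $\varepsilon>0$, if $p\gg\frac{\log^8 n}{n}$, then a.a.s.\ every spanning subgraph of $D(n,p)$ with minimum in- and out-degree at least $(\tfrac12+\varepsilon)np$ has a directed Hamilton cycle. Since the excerpt explicitly attributes this to Hefetz--Steger--Sudakov \cite{HeStSu} (for $p\ge n^{-1/2+o(1)}$) and its strengthening to Ferber--Nenadov--Noever--Peter--Skoric \cite{FeNeNoPeSk}, I would present the proof as a reduction to those references combined with a self-contained sketch of the core mechanism, so that the argument is readable here. The only thing genuinely needed downstream (in Lemma~\ref{lem:gnp_resilient_typical} and Definition of $\varepsilon$-di-ham-resilient) is the \emph{statement}, so I would keep the exposition of the proof brief and cite the two papers for the full technical details.

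\textbf{Key steps.} First I would record the pseudorandom properties of $D(n,p)$ that the argument uses: for $p\gg\frac{\log^8 n}{n}$, a.a.s.\ $D(n,p)$ is such that for every two disjoint vertex sets $X,Y$ with, say, $|X|,|Y|\ge \varepsilon n/100$, the number of directed edges from $X$ to $Y$ is $(1+o(1))|X||Y|p$, and more generally the bipartite-type edge-distribution (``jumbledness'') estimate holds; this follows from Chernoff plus a union bound exactly as in Lemma~\ref{lem:property_g_n_p}. Second, I would pass to the subgraph $D'\subseteq D(n,p)$ of minimum in- and out-degree $\ge(\tfrac12+\varepsilon)np$ that we are handed; note that inside each small set the deleted edges cannot destroy the expansion coming from the large-set edge-count, so $D'$ inherits strong directed expansion: every set $X$ with $|X|\le n/2$ has out-neighborhood and in-neighborhood of size $\ge(\tfrac12+\tfrac\varepsilon2)n$, and in fact $D'$ is a ``directed expander'' in the sense required by the rotation-extension machinery for digraphs. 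Third, I would invoke the directed analogue of the absorbing/rotation-extension argument: the high-degree condition $(\tfrac12+\varepsilon)np$ is exactly what forces, via an out- and in-rotation argument on a longest directed path, the existence of many boosters, so one can iteratively lengthen a longest path and finally close it into a Hamilton cycle. For the sparse range $p\gg\frac{\log^8 n}{n}$ the clean way is simply to cite Theorem~1.2 of \cite{FeNeNoPeSk}, which is precisely this statement; for completeness I would remark that for $p\ge n^{-1/2+o(1)}$ the same conclusion is the main theorem of \cite{HeStSu}.

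\textbf{Main obstacle.} The hard part is the directed rotation-extension step at near-optimal density: unlike in the undirected case, rotating a directed path is delicate because reversing a segment changes edge orientations, so one must work simultaneously with rotations from both endpoints and control the set of endpoints reachable by such double rotations. Handling this down to $p$ as small as $\frac{\log^8 n}{n}$ (rather than $n^{-1/2+o(1)}$) is exactly the technical contribution of \cite{FeNeNoPeSk}, involving a two-round exposure / sprinkling scheme and a careful analysis of ``expanders with boosters'' in the directed setting. I would therefore not reproduce that analysis but cite it, since for the purposes of this paper Theorem~\ref{thm:di_resilience} is used only as a black box in Lemma~\ref{lem:gnp_resilient_typical}.
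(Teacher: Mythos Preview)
Your proposal is correct and matches the paper's treatment: Theorem~\ref{thm:di_resilience} is not proved in the paper at all but is simply quoted as a known result, attributed to \cite{HeStSu} for $p\ge n^{-1/2+o(1)}$ and to \cite{FeNeNoPeSk} for the full range $p\gg\frac{\log^8 n}{n}$, and then used as a black box in the proof of Lemma~\ref{lem:gnp_r_t_strong}~(i). Your additional sketch of the underlying mechanism is more than the paper itself provides, but your bottom line---cite the references and move on---is exactly what the paper does.
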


We will often be considering events defined over the product of two probability spaces,
and the following simple lemma will be handy.

\begin{lem} \label{lem:two_space_to_one}
Let $X_1$ and $X_2$ be two random variables, and
suppose that there exists a set $A$ such that
$\mathbf{P}( (X_1, X_2) \in A) = 1 - x$ for some positive real $x$.
Let
\[ A_1 = \Big\{ a \,\,\Big|\,\, \mathbf{P}( (X_1, X_2) \in A \,|\, X_1 = a ) \ge 1 - \sqrt{x} \Big\}. \]
Then $\mathbf{P}(X_1 \in A_1) \ge 1 - \sqrt{x}$.
\end{lem}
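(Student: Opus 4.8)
The plan is to recognize this as a one-line consequence of Markov's inequality applied to the conditional failure probability. Define the function $f(a) = \mathbf{P}\big( (X_1,X_2) \notin A \,\big|\, X_1 = a \big)$, which records, for each possible value $a$ of $X_1$, how likely the pair $(X_1,X_2)$ is to fall outside $A$ given $X_1 = a$. By the law of total probability, $\mathbb{E}\big[ f(X_1) \big] = \mathbf{P}\big( (X_1,X_2) \notin A \big) = x$.

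Next I would observe that the set $A_1$ in the statement is precisely $\{ a : f(a) \le \sqrt{x} \}$, so that the event $\{ X_1 \notin A_1 \}$ coincides with $\{ f(X_1) > \sqrt{x} \}$. Since $f \ge 0$, Markov's inequality yields $\mathbf{P}\big( f(X_1) > \sqrt{x} \big) \le \mathbb{E}[f(X_1)] / \sqrt{x} = x / \sqrt{x} = \sqrt{x}$. Taking complements gives $\mathbf{P}(X_1 \in A_1) \ge 1 - \sqrt{x}$, which is the desired bound.

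There is essentially no obstacle here; the only minor points of care are that the conditional probability $f(a)$ should be well-defined (which it is for the discrete random variables that arise in our application, and in general is handled by passing to a regular conditional distribution) and that the statement is trivially true, though vacuous, when $x \ge 1$. Accordingly I would present the argument in the discrete setting, which is all that is needed in the sequel.
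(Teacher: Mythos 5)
Your proof is correct and is essentially the same as the paper's: the paper writes directly the inequality $x = \mathbf{P}((X_1,X_2)\notin A) \ge \mathbf{P}(X_1\notin A_1)\cdot\sqrt{x}$, which is precisely the Markov-inequality argument you spell out via the conditional failure probability $f(X_1)$. The only difference is presentational (you name Markov's inequality explicitly), so nothing further needs to be said.
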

\begin{proof}
Since
\[ x = \mathbf{P}\Big((X_1, X_2) \notin A\Big) \ge \mathbf{P}( X_1 \notin A_1) \cdot \sqrt{x}, \]
we have $\mathbf{P}(X_1 \notin A_1) \le \sqrt{x}$,
or equivalently $\mathbf{P}(X_1 \in A_1) \ge 1 - \sqrt{x}$.
\end{proof}

We prove Lemma~\ref{lem:prob_transfer} which, as seen in the previous subsection,
allows us to restrict our attention to sparse random graphs.
For two graphs $G_1 \supseteq G_2$ and an incompatibility
system $\mathcal{F}$ defined over $G_1$, we define the {\em incompatibility system
induced by $\mathcal{F}$ on $G_2$} as the incompatibility system
where two edges $e, e' \in E(G_2)$ are incompatible if and only if
they are in $\mathcal{F}$.

\begin{proof}
Let $G_1 = G(n,p_1)$ and let $G_2$ be a random subgraph of $G_1$ obtained by
retaining every edge independently with probability $\frac{p_2}{p_1}$.
Note that the distribution of the subgraph $G_2$ is identical to that of $G(n,p_2)$.

Let $\mathcal{R}$ be the collection of graphs that contain a compatible
Hamilton cycle for every $(\alpha+ 2\varepsilon)np_2$-bounded incompatibility
system. By the assumption of the lemma, we know that
\[
\mathbf{P}(G_2 \in \mathcal{R}) = 1-o(1).
\]
Let $\mathcal{R}_1$ be the collection of graphs $\Gamma$ such that
$\mathbf{P}(G_2 \in \mathcal{R} \,|\, G_1 = \Gamma) \ge \frac{1}{2}$.
By Lemma \ref{lem:two_space_to_one}, we see that
\[ \mathbf{P}(G_1 \in \mathcal{R}_1) = 1-o(1). \]
On the other hand, for each fixed $(\alpha+\varepsilon)np_1$-bounded
incompatibility system $\mathcal{F}$ over $G_1$, by Chernoff's inequality
and the union bound, with probability greater than $\frac{1}{2}$,
the incompatibility system induced by $\mathcal{F}$ on $G_2$
is $(\alpha+2\varepsilon)np_2$-bounded.

Therefore, if $G_1 \in \mathcal{R}_1$, then for every $(\alpha+\varepsilon)np_1$-bounded
incompatibility system $\mathcal{F}$ over $G_1$, there exists a subgraph $G_2' \subseteq G_1$
such that $\mathcal{F}$ induces an $(\alpha+2\varepsilon)np_2$-bounded incompatibility
system over $G_2'$, and $G_2' \in \mathcal{R}$. These two properties imply that $G_2'$
contains a Hamilton cycle compatible with $\mathcal{F}$, which in turn implies that
$G_1$ also contains such Hamilton cycle.
\end{proof}

\subsection{Proof of Lemma \ref{lem:gnp_resilient_typical}}

In this subsection, we prove Lemma \ref{lem:gnp_resilient_typical}.
The perfect matching $M$ in the statement of the lemma will be chosen according to some
random process that we denote by $\Phi$, i.e., $M = \Phi(G)$.
In fact we prove the following strengthening of Lemma \ref{lem:gnp_resilient_typical}.

\begin{lem} \label{lem:gnp_r_t_strong}
If $p = \frac{\omega_n \log^8 n}{n}$ for some $\omega_n \le \log n$ that tends
to infinity, then $G=G(n,p)$ a.a.s. has the following property.
For every $\Big(1 - \frac{1}{\sqrt{2}} - 2\varepsilon\Big)np$-bounded
incompatibility system $\mathcal{F}$ over $G$,
a random perfect matching $M = \Phi(G)$ satisfies each of the
following properties with probability $1 - o(1)$,
\begin{itemize}
  \setlength{\itemsep}{1pt} \setlength{\parskip}{0pt}
  \setlength{\parsep}{0pt}
	\item[(i)]
$(G,M)\text{ is \ensuremath{\varepsilon}-di-ham-resilient}$, and
	\item[(ii)]
$(G,M,\mathcal{F})\text{ is \ensuremath{\varepsilon}-typical}$.
\end{itemize}
\end{lem}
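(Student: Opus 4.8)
The plan is to construct the random perfect matching $M=\Phi(G)$ by a semi-random (``nibble'') process inside a small independent copy of the edges, and then verify properties (i) and (ii) separately: (ii) by a moment/concentration computation, and (i) by a two-round exposure combined with Theorem~\ref{thm:di_resilience}. I describe the argument for even $n$; for odd $n$ one first picks neighbours $a_{m},b_{m}$ of $v_*$ with $\{a_m,v_*\}$ and $\{v_*,b_m\}$ compatible at $v_*$ --- possible since the incompatibility degree of $v_*$ is at most $(1-\tfrac1{\sqrt2})np<\deg_G(v_*)-1$ --- and matches the rest as below. Write $m=n/2$ and $\mu=1-\tfrac1{\sqrt2}-2\varepsilon$. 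First I would condition on the a.a.s.\ event that $G=G(n,p)$ has all degrees $(1\pm o(1))np$ and $e_G(X,Y)=(1\pm o(1))|X|\,|Y|\,p$ whenever $|X|\,|Y|\,p\gg n$, and split $p=p_1+p_2-p_1p_2$ with $p_1=\log^{5}n/n$ (so $p_1=o(p)$, $np_1\gg\log n$) and $p_2=(1-o(1))p$, giving $G=G_1\cup G_2$ with $G_1\sim G(n,p_1)$, $G_2\sim G(n,p_2)$ independent. Then I would pick a uniformly random equipartition $V=A\cup B$; by Chernoff and a union bound over the $O(n^{2})$ relevant edges, with probability $1-o(1)$ over the partition (and the $G_1/G_2$ split) the following \emph{thinning bounds} hold for the given $\mathcal F$: for every edge $\{a,b\}$ with $a\in A$, $b\in B$, the number of $a'\in A$ with $\{a',b\}\in E(G)$ incompatible with $\{a,b\}$ at $b$ is at most $(1+o(1))\mu mp$ (the $\le\mu np$ such edges at $b$ are halved by the random partition), symmetrically at $a$, and the number of $b'\in B$ with $\{a,b'\}\in E(G_1)$ incompatible with $\{a,b\}$ at $a$ is at most $(1+o(1))\mu mp_1$. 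I would fix such a good partition and split.

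Next I would specify the nibble $\Phi$, building a perfect matching $M\subseteq G_1[A,B]$ over $K=\Theta(\log n\cdot\log\log n)$ rounds: maintaining the unmatched sets $A_k,B_k$ ($|A_k|=|B_k|=m_k$, $m_1=m$), in round $k$ one reveals a fresh batch of not-yet-revealed $A$--$B$ pairs (so that never-revealed pairs retain fresh independent status, present with probability $p_1$) and adjoins a partial matching covering an $o(1)$-fraction of $A_k$, each edge obtained by picking a uniformly random still-available $G_1$-neighbour; once $m_k\le m/\log^{2}n$ one finishes with an arbitrary perfect matching of the still essentially random graph $G_1[A_k,B_k]$. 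The properties I would extract, each with probability $1-o(1)$, are: \textbf{(Q1)} $M$ is a perfect matching; \textbf{(Q2)} pseudorandomness persists, $|N_{H}(v)\cap B_k|=(1+o(1))|N_H(v)\cap B|\cdot m_k/m$ for $H\in\{G,G_1\}$, all $v$, and all $k$ with $m_k\ge m/\log^{2}n$; \textbf{(Q3)} for every $v$ matched in such a round $k$ and every $u$, the conditional probability that $u$ is $v$'s partner, given the history up to the start of round $k$ and that $v$ is matched in round $k$, is at most $(1+o(1))/|N_{G_1}(v)\cap B_k|$ (resp.\ $/|N_{G_1}(v)\cap A_k|$). \emph{Establishing \textbf{(Q1)}--\textbf{(Q3)} --- in particular the sharp $1+o(1)$ constant in \textbf{(Q3)}, without which the value $1-\tfrac1{\sqrt2}$ is not reached --- together with the attendant concentration statements, is the technical heart of the proof; these are standard-in-spirit R\"odl-nibble-type estimates but need careful round-by-round bookkeeping.}

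Then I would establish property (ii). Fixing $i$, the out-degree of $e_i$ in $D_G(M;\mathcal F)$ equals, by Definition~\ref{def:digraph}, $N-R_1-R_2$, where $N=|N_G(b_i)\cap A|-1=(1+o(1))\tfrac{\delta(G)}{2}$, $R_1$ counts $a_j\in N_G(b_i)\cap A$ with $\{b_i,a_j\}$ incompatible with $\{a_i,b_i\}$ at $b_i$, and $R_2$ counts the remaining such $a_j$ with $\{b_i,a_j\}$ incompatible with $\{a_j,b_j\}$ at $a_j$. The first thinning bound gives $R_1\le(1+o(1))\mu mp=(1+o(1))\mu N$ deterministically once the partition is fixed. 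Revealing $\Phi$ up to and including the round in which $b_i$ gets matched fixes $a_i$, hence $R_1$; then for each surviving $a_j$, \textbf{(Q3)}, \textbf{(Q2)} and the $G_1$-thinning bound at $a_j$ together bound by $(1+o(1))\mu$ the conditional probability that $b_j$ renders $\{b_i,a_j\}$ incompatible at $a_j$, so $\mathbb{E}[R_2\mid\text{history through }b_i]\le(1+o(1))\mu(N-R_1)$. A concentration argument over the remaining nibble rounds --- using that the partner choices in, and the small number of surviving $a_j$'s matched by, each round behave almost independently, so that $R_2$ has conditional variance $O(mp)$ --- then yields $R_2\le(1+o(1))\mu(N-R_1)$ with probability $1-\exp(-\Omega(\varepsilon^{2}mp))=1-o(n^{-1})$. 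A union bound over all $i$ and over in-degrees (which are symmetric) gives that every in- and out-degree of $D_G(M;\mathcal F)$ is at least $(1-o(1))(1-\mu)^{2}\tfrac{\delta(G)}{2}\ge(\tfrac12+\varepsilon)\tfrac{\delta(G)}{2}$, since $(1-\mu)^{2}=(\tfrac1{\sqrt2}+2\varepsilon)^{2}>\tfrac12+\varepsilon$; thus $(G,M,\mathcal F)$ is $\varepsilon$-typical.

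Finally, for property (i), I would let $D_2$ be the digraph on vertex set $M$ with $e_i\to e_j$ iff $\{b_i,a_j\}\in E(G_2)$. As $M\subseteq G_1$ is independent of $G_2$ and the pairs $\{b_i,a_j\}$ ($i\ne j$) are distinct, $D_2$ is distributed exactly as $D(m,p_2)$ and is independent of $M$; also $D_2\subseteq D_G(M)$, and every edge of $D_G(M)$ not in $D_2$ comes from $G_1$ and hence belongs to a digraph whose in- and out-degrees are all at most $(1+o(1))mp_1=o(mp)$. Since $p_2\gg\log^{8}m/m$, Theorem~\ref{thm:di_resilience} (with parameter $\varepsilon/4$) gives that a.a.s.\ every spanning subgraph of $D_2$ of minimum in- and out-degrees at least $(\tfrac12+\tfrac\varepsilon4)mp_2$ has a directed Hamilton cycle. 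Hence, given $D'\subseteq D_G(M)$ with minimum in- and out-degrees at least $(\tfrac12+\varepsilon)\tfrac{\delta(G)}{2}$, the subgraph $D'\cap D_2$ has minimum in- and out-degrees at least $(\tfrac12+\varepsilon)\tfrac{\delta(G)}{2}-(1+o(1))mp_1\ge(\tfrac12+\tfrac\varepsilon4)mp_2$, so $D'\cap D_2$, and therefore $D'$, is Hamiltonian: $(G,M)$ is $\varepsilon$-di-ham-resilient. Combining the two parts, $M=\Phi(G)$ satisfies (i) and (ii) each with probability $1-o(1)$. Everything outside the nibble paragraph is routine bookkeeping; the estimates \textbf{(Q1)}--\textbf{(Q3)} and their concentration companions are the one real obstacle.
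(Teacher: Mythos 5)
Your proposal follows the same broad plan as the paper: construct $M$ by a nibble inside a thin independent layer $G_1$, reserve the rest of $G$ as an independent $G_2$ for di-Hamiltonicity, and verify (i) via Theorem~\ref{thm:di_resilience} applied to $D_{G_2}(M)\sim D(m,p_2)$ and (ii) via a two-term "$R_1+R_2\le(1-(1-\mu)^2+o(1))mp$" computation. The argument for (i) is essentially identical to the paper's and is correct (the paper takes $p_1=\frac{\varepsilon}{4}p$ while you take $p_1=\log^5 n/n$; both give $mp_1=o(mp)$ here, and both rely on $M$ being measurable with respect to $G_1$ so that $D_{G_2}(M)$ is an honest $D(m,p_2)$).

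The gap is in the verification of (ii), specifically in the sentence ``so $\mathbb{E}[R_2\mid\text{history through }b_i]\le(1+o(1))\mu(N-R_1)$.'' You condition on the nibble history up to the round in which $b_i$ is matched and then apply \textbf{(Q3)} only to the \emph{surviving} $a_j$'s. But $R_2$ also counts those $a_j\in N_G(b_i)$ that were already matched by that point, and their contribution is a fixed function of the conditioned-on history; nothing in the conditional-expectation step controls it. To make the per-vertex conditioning work you would need a separate, uniform-in-$e$ bound on the number of $B$-bad edges for $e$ placed in each of the earlier rounds --- but once you state and prove such a bound you have essentially recreated the paper's ``normality'' condition (v), established round-by-round via Lemma~\ref{lem:chosen_edge_control} and a union bound over all $e\in E(G)$ and all rounds. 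That union-bound-over-rounds organization is precisely what makes the paper's (ii) go through cleanly, and it is also what lets the paper avoid the delicate martingale/Azuma argument you would need for your single-shot concentration of $R_2$ (which is a sum of non-independent indicators over a dependent, sequentially revealed matching). Two smaller points: your nibble is underspecified regarding collisions (two $a$'s selecting the same $b$), which the paper handles by Bernoulli edge-selection followed by discarding touching pairs, and \textbf{(Q3)} itself --- the sharp $(1+o(1))$ uniformity of the partner distribution --- is exactly the kind of statement the paper replaces by tracking the degree/edge-count estimates in conditions (i)--(iv) of normality rather than proving a per-vertex distributional claim.
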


Note that Lemma \ref{lem:gnp_resilient_typical} immediately follows from Lemma \ref{lem:gnp_r_t_strong},
since the latter implies the a.a.s.~existence
of a particular instance of $M$ for which both Properties (i) and (ii) hold.

\medskip

Throughout the section, we assume that $\varepsilon$ is a given fixed positive real
(we may assume that $\varepsilon$ is small enough by decreasing its value if necessary),
and let $\delta = e^{-22\varepsilon^{-1} \ln \varepsilon^{-1}}$. Given $G = G(n,p)$,
we construct a perfect matching $\Phi(G)$ by the following algorithm
(we first give a description for even $n$).

\begin{itemize}
  \setlength{\itemsep}{1pt} \setlength{\parskip}{0pt}
  \setlength{\parsep}{0pt}
\item[1.] Take a random bipartite subgraph $H$ of $G$ by choosing a uniform bisection
$A \cup B$ and then taking each edge crossing the bisection
independently with probability $\frac{\varepsilon}{4}$. Initialize $H_0:=H$, $A_0:=A$, $B_0:=B$.

\item[2.] Repeat the following steps $T = \frac{\ln (4/\varepsilon)}{-\ln (1 - \delta)} \approx \frac{\ln (4/\varepsilon)}{\delta}$ times (start from $i=0$).
\vspace{-0.1cm}
\begin{itemize}
  \setlength{\itemsep}{1pt} \setlength{\parskip}{0pt}
  \setlength{\parsep}{0pt}

\item[2-1.] Given a bipartite graph $H_i$ with bipartition $A_i \cup B_i$
satisfying $n_i := |A_i| = |B_i|$ and $m_i := e(H_i)$, choose each edge
of $H_i$ independently with probability $\frac{\delta n_i}{m_i}$ to form a set of
edges $M_i^{(0)}$.

\item[2-2.] Let $M_i \subseteq M_i^{(0)}$ be the set of edges incident to
no other edges in $M_i^{(0)}$.

\item[2-3.] Remove the vertices incident to the edges in $M_i$ from $H_i$ to obtain $H_{i+1}$.
\end{itemize}
\vspace{-0.1cm}
\item[3.] Take an arbitrary perfect matching $M_T$ in the remaining graph $H_T$ and define
$\Phi(G)$ as the union of the matchings $M_0, \ldots, M_T$.
\end{itemize}

\begin{rem} If $n$ is odd, then for each $i=0,1,\ldots, T$,
the bipartite graphs $H_i$ will
have bipartition $A_i \cup B_i$ with $|A_i| = n_i + 1$ and $|B_i| = n_i$ for all
$i=0,1,\ldots, T$. In this case, in Step 3, first choose an edge
$\{v_*, a_{(n-1)/2}\}$ within $A_T$, and then choose $b_{(n-1)/2} \in B_T$
so that $\{v_*, a_{(n-1)/2}\}$ and $\{v_*, b_{(n-1)/2}\}$ are compatible.
Afterwards, find a perfect matching between $A_T \setminus \{v_*, a_{(n-1)/2}\}$
and $B_T \setminus \{b_{(n-1)/2}\}$.
\end{rem}

Since each $M_i$ forms a matching, the algorithm above
produces a sequence of balanced bipartite
graphs $H_i$ with vertex partition $A_i \cup B_i$ for $i=0,1,\ldots,T$,
where $H_0 = H$ and $A_0=A, B_0 =B$.
Note that the algorithm might fail to produce a perfect matching of $H$,
as there is no guarantee on $H_T$ containing a perfect matching in the final step.
However, in Lemma \ref{lem:final_pm} we will prove that such `bad event'
rarely happens.

\begin{proof}[Proof of Lemma \ref{lem:gnp_r_t_strong} (i)]
We view the probability space
generated by the pair $(G(n,p), H)$ from a slightly different perspective.
Let $p_1=\frac{\varepsilon}{4}p$, and define $p_2$
by $p=p_1+p_2-p_1p_2$. Let $G_{1}=G(n,p_1)$ and $G_{2}=G(n,p_2)$,
and note that $G=G_{1}\cup G_{2}$ has the same distribution as $G(n,p)$.
The random algorithm $\Phi$ can equivalently be defined by
first taking a random subgraph $G_1$,
and then applying a random algorithm $\Psi$, i.e. $\Phi(G) = \Psi(G_1)$.
Further note that all events in the probability space $\mathcal{P}(G, G_1)$ generated
by the pair of graphs $G$ and $G_1$ are measurable
in the probability space $\mathcal{P}(G_1, G_2)$ generated
by the pair of graphs $G_1$ and $G_2$.
Therefore, since the event that we would like to study lies in the probability space
$\mathcal{P}(G, G_1, \Phi)$ we may as well compute its probability
in the space $\mathcal{P}(G_1, G_2, \Psi)$.

Since $G_1$ and $G_2$ are independent, conditioned on $\Psi(G_1) = M$,
the graph $D_{G_{2}}(M)$ has the distribution of the random directed graph
$D(\lfloor \frac{n}{2} \rfloor,p_{2})$.
We thus know by Theorem \ref{thm:di_resilience}
that a.a.s.~every subgraph of $D_{G_{2}}(M)$ of minimum in- and out-degrees at least
$\left(\frac{1}{2}+\frac{\varepsilon}{2}\right)np_2$ is Hamiltonian, i.e.
\[
	\BFP\Big((G_2,\Psi(G_1))\text{ is \ensuremath{\frac{\varepsilon}{2}}-di-ham-resilient} \,\Big|\, \text{$\Psi$ succeeds} \Big) = 1-o(1).
\]
Hence as long as $\Psi$ outputs a perfect matching with high probability
(this fact will be proved in Lemma \ref{lem:final_pm}),
\[
	\BFP\Big((G_2,\Phi(G))\text{ is \ensuremath{\frac{\varepsilon}{2}}-di-ham-resilient} \Big) = 1-o(1).
\]

Observe that if $G_1$ has maximum degree at most $\frac{\varepsilon}{2}np$, then
every subgraph of $D_G(M)$ of minimum in- and out-degrees at least
$\left(\frac{1}{2}+ \varepsilon\right)np$ contains
a subgraph of $D_{G_2}(M)$ of minimum in- and out-degrees at least
$\left(\frac{1}{2}+\frac{\varepsilon}{2}\right)np$. Hence in this case,
$(G_2, \Phi(G))$ being $\frac{\varepsilon}{2}$-di-ham-resilient implies
$(G, \Phi(G))$ being $\varepsilon$-di-ham resilient.
 Let $\mathcal{E}$ be the event that
$(G,\Phi(G))\text{ is $\varepsilon$-di-ham-resilient}$.
Since $G_1$ a.a.s. has maximum degree at most $\frac{\varepsilon}{2}np$,
the observations above imply $\BFP(\mathcal{E})= 1-o(1)$.
Let $\mathcal{R}$ be the collection of graphs $\Gamma$ such that
$\mathbf{P}(\mathcal{E} \,|\, G=\Gamma) = 1 - o(1)$. Then
by $\mathbf{P}(\mathcal{E}) = 1 -o(1)$ and Lemma \ref{lem:two_space_to_one},
we have $\mathbf{P}(G \in \mathcal{R}) = 1 - o(1)$, thus proving the lemma.
\end{proof}

It thus remains to prove Lemma \ref{lem:gnp_r_t_strong} (ii).
Before proceeding further, we establish some simple properties of $G(n,p)$
and $H$ in the following two lemmas. Let $$q = \frac{\varepsilon}{4}p\,.$$
Thus $q$ is the probability that a pair of vertices in $A \times B$
forms an edge in $H$.

\begin{lem} \label{lem:rg_typical_0}
If $p \le \frac{\log^9 n}{n}$, then $G(n,p)$ a.a.s.~has the following property.
For every fixed vertex $v$, there exists at most
one vertex having codegree 2 with $v$, and all other
vertices have codegree at most 1 with $v$.
\end{lem}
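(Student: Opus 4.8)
The plan is a routine first-moment calculation. Call a vertex $v$ \emph{bad} if it violates the stated property. The first step is to reformulate badness in terms of $4$-cycles. A $4$-cycle of $G$ passing through $v$ is the same thing as a choice of a vertex $u\ne v$ together with an unordered pair $\{a,b\}$ of two distinct common neighbours of $v$ and $u$; so, writing $c_u=|N(v)\cap N(u)|$ for the codegree of $v$ and $u$, the number of $4$-cycles through $v$ equals $\sum_{u\ne v}\binom{c_u}{2}$. This quantity is at most $1$ exactly when at most one vertex $u$ has $c_u\ge 2$ and that vertex has $c_u=2$; in other words, $v$ is bad if and only if $v$ lies on at least two distinct $4$-cycles of $G$.

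Consequently $\mathbf{P}(v\text{ is bad})$ is at most the expected number of unordered pairs $\{F_1,F_2\}$ of distinct $4$-cycles of $G$ with $v\in V(F_1)\cap V(F_2)$, which equals $\sum_{\{F_1,F_2\}}\mathbf{P}\big(E(F_1)\cup E(F_2)\subseteq E(G)\big)$. I would split this sum according to the isomorphism type of the union $F_1\cup F_2$: if $F_1\cup F_2$ has $j$ vertices and $\ell$ edges, then, since the vertex $v$ is fixed, there are $O(n^{\,j-1})$ pairs of that type and each contributes exactly $p^{\ell}$.

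The combinatorial input needed is a lower bound on $\ell$ in terms of $j$. Since $F_1$ and $F_2$ each have $4$ vertices and both contain $v$, we have $|V(F_1)\cap V(F_2)|=8-j\in\{1,2,3,4\}$, and every common edge of $F_1$ and $F_2$ lies inside this intersection; combined with the fact that two distinct $4$-cycles share at most two edges (three edges of a $4$-cycle already determine it), this gives $\ell\ge 6$ in all cases, $\ell\ge 7$ when $j=6$, and $\ell=8$ when $j=7$. Since $p\le\frac{\log^9 n}{n}$, in every case
\[
n^{\,j-1}p^{\ell}\le n^{\,j-1}\Big(\frac{\log^9 n}{n}\Big)^{\ell}=(\log n)^{9\ell}\,n^{\,j-1-\ell}=o(1/n),
\]
because $j-1-\ell\le -2$ throughout (the tight case being $j=5$, $\ell=6$, realised by the two $4$-cycles $v\,a\,u_1\,b\,v$ and $v\,a\,u_2\,b\,v$). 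As there are only $O(1)$ isomorphism types to sum over, we conclude $\mathbf{P}(v\text{ is bad})=o(1/n)$.

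Finally, a union bound over the $n$ choices of $v$ shows that a.a.s.\ no vertex of $G(n,p)$ is bad, which is precisely the statement of the lemma. I do not expect a genuine obstacle here: this is a standard small-subgraph first-moment argument, and the only mildly delicate point is the vertex-versus-edge bookkeeping for $F_1\cup F_2$ in the third paragraph — but even its extremal type $j=5$, $\ell=6$ comfortably clears the $1/n$ threshold, so there is ample room.
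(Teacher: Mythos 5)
Your proof is correct and takes exactly the approach the paper indicates: the paper states without detail that the claim follows from a first-moment argument showing that a.a.s.\ no two $4$-cycles of $G(n,p)$ share a vertex, which is precisely the reformulation and calculation you carry out (your reduction of badness to the quantity $\sum_{u}\binom{c_u}{2}$ and the case analysis by $|V(F_1)\cap V(F_2)|\in\{1,2,3,4\}$ are the details the paper omits). Nothing to add.
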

\begin{proof}
The claim follows immediately from the easily established (say, through the first moment method) fact that for such values of $p(n)$, the random graph $G(n,p)$ a.a.s.~does not contain two cycles of length 4 sharing a vertex.
\end{proof}

The following lemma involves two layers of randomness; first of $G(n,p)$ and then of the graph $H$.
It asserts that $G(n,p)$  a.a.s.~is chosen so that $H$ (which is determined by another
random event) a.a.s.~has the listed properties.

\begin{lem}\label{lem:rg_typical}
For $p \gg \frac{\log n}{n}$ and fixed reals $\mu,\xi>0$,
$G = G(n,p)$ has the following property with probability $1-o(1)$.
For every $\mu np$-bounded incompatibility system $\mathcal{F}$ over $G$,
the random graph $H$ and the partition $A \cup B$ a.a.s. have the following properties:
\vspace{-0.1cm}
\begin{itemize}
  \setlength{\itemsep}{0pt} \setlength{\parskip}{0pt}
  \setlength{\parsep}{0pt}
\item[(i)] In $H$, all vertices have degree $(1+o(1))n_0q$, and in $G$, all vertices
have degree $(1+o(1))n_0p$ across the partition $A \cup B$.
\item[(ii)] For all $A' \subseteq A$ and $B' \subseteq B$,
$e_H(A',B') = |A'||B'|q + o(n^2q)$.
\item[(iii)] All pairs of sets $A' \subseteq A$ and $B' \subseteq B$
of sizes $|A'|=|B'| \le e^{-2} \xi n_0$ satisfy $e_H(A',B') \le |A'|\cdot \xi n_0 q$.
\item[(iv)] $\mathcal{F}$ induces a $(\mu + \frac{\varepsilon}{5})n_0 q$-bounded incompatibility
system on $H$, and $\mathcal{F}$ induces a  $(\mu + \frac{\varepsilon}{5})n_0 p$-bounded incompatibility system on the bipartite subgraph of $G$ between $A$ and  $B$.
\end{itemize}
\end{lem}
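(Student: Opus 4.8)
The plan is to establish the four properties separately, in each case first invoking the relevant concentration estimate for a fixed incompatibility system and then upgrading to "for every $\mathcal{F}$" via a union bound over the at most $2^{O(n^2 \log n)}$ choices of $\mathcal{F}$ — except that, crucially, Properties (i)--(iii) do not involve $\mathcal{F}$ at all, so only (iv) actually requires handling all systems simultaneously. For Property (i), the degree of a vertex $v$ into the opposite side of the bisection is, after conditioning on a typical bisection, a sum of independent indicators with mean $(1+o(1))n_0 p$ (for $G$) and $(1+o(1))n_0 q$ (for $H$, which further thins by $\frac{\varepsilon}{4}$); Chernoff plus a union bound over the $n$ vertices gives the claim. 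Note the two layers of randomness are handled by Lemma~\ref{lem:two_space_to_one}: we show that with probability $1-o(1)$ over $G$, the conditional probability (over the choice of bisection and $H$) of the stated event is $1-o(1)$.

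For Property (ii), fix $A' \subseteq A$ and $B' \subseteq B$. Conditioned on $G$ and on the bisection, $e_H(A',B')$ is $\mathrm{Bi}(e_G(A',B'), \varepsilon/4)$, and $e_G(A',B')$ itself concentrates around $|A'||B'|p$ by Chernoff whenever $|A'||B'|p \gg n$ (Lemma~\ref{lem:property_g_n_p}(iv) handles the lower tail; an analogous upper-tail bound, which the authors suppressed, handles the other side). When $|A'||B'|p = O(n)$ the additive error term $o(n^2 q)$ swamps everything and there is nothing to prove. The union bound is over $2^{|A|}\cdot 2^{|B|} \le 4^n$ pairs, against a failure probability of $e^{-\Omega(\varepsilon^2 |A'||B'|p)}$; since in the relevant regime $|A'||B'| p \gg n \gg $ is not enough, one should instead phrase it as: the error $o(n^2 q)$ should be read as, say, $n^{2} q / \log n$, and then $e^{-\Omega(\varepsilon^{2} n^{4} q^{2} / (n^{2}\log^{2} n))} = e^{-\Omega(n^{2} p^{2}/\log^{2}n)} = e^{-\omega(n)}$ since $p \gg \log n / n$, which beats $4^n$. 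Property (iii) is the small-set analogue: for $|A'|=|B'|=s \le e^{-2}\xi n_0$, the expected value of $e_H(A',B')$ is $s^2 q$, and we want to rule out exceeding $s \xi n_0 q$, i.e. a factor of roughly $\xi n_0 / s \ge e^2$ above the mean. This is precisely the regime where the standard "small-set edge-count" argument (as in the proof of Lemma~\ref{lem:property_g_n_p}(ii)--(iii)) applies: the probability that a fixed such pair has $e_H(A',B') \ge s\xi n_0 q$ is at most $\binom{s^2}{s\xi n_0 q} q^{s \xi n_0 q} \le (e s^2 q / (s \xi n_0 q))^{s \xi n_0 q} = (es/(\xi n_0))^{s \xi n_0 q} \le e^{-s \xi n_0 q}$ using $es/(\xi n_0) \le e^{-1}$, and the union bound over $\binom{n_0}{s}^2 \le (en_0/s)^{2s}$ pairs survives because $\xi n_0 q = \Omega(\xi n p) \gg \log n \gg \log(n_0/s)$.

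For Property (iv) — the only place all incompatibility systems must be controlled at once — fix $\mathcal{F}$ and a vertex $v$ with an incident edge $e$ lying across the bisection. The number of edges at $v$ incident to $e$ and incompatible with it in $G$ is at most $\mu n p$ (actually $\mu n p \le (1+o(1))\mu n_0 p \cdot 2$, but restricting to the $A$--$B$ cut and to $H$ cuts this down). Among those, the ones surviving into $H$ form a binomial $\mathrm{Bi}(\le \mu n p, \varepsilon/4)$-type variable with mean $\le (\varepsilon/4)\mu n p$, but we want the bound $(\mu + \varepsilon/5) n_0 q = (\mu+\varepsilon/5)(\varepsilon/4) n_0 p$ — so the point is that the count in $H$, which has mean about $\mu n_0 q$ (after accounting for the $A$--$B$ restriction cutting the relevant degree to $(1+o(1))n_0$), exceeds $(\mu+\varepsilon/5)n_0 q$ only with probability $e^{-\Omega(\varepsilon^2 n_0 q)} = e^{-\Omega(\varepsilon^3 n p)}$ by Chernoff. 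Union-bounding over the $n$ choices of $v$, the $(1+o(1))np$ choices of $e$, and — here is the obstacle — the number of distinct $\mathcal{F}$ is too large to union bound over directly; instead one fixes $v$ and $e$ first and observes that the only data of $\mathcal{F}$ that matters is the set $S_{v,e}$ of up to $\mu n p$ edges at $v$ incompatible with $e$, so we union-bound over the $\le \binom{n}{\le \mu n p}$ choices of $S_{v,e}$, giving total failure probability $\le n \cdot (1+o(1))np \cdot 2^{H(\mu)\cdot np \cdot O(1)} \cdot e^{-\Omega(\varepsilon^3 np)}$, which is $o(1)$ provided $\mu$ (equivalently the entropy term) is small relative to $\varepsilon^3$ — and since we are free to assume $\varepsilon$ small and $\mu \le 1 - \tfrac{1}{\sqrt 2}$ is an absolute constant, one checks the constants work out, or if not, absorbs a harmless loss. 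Finally, having fixed $G$ in the good set, the randomness of $H$ alone gives the "$H$ a.a.s.\ has these properties" conclusion; again Lemma~\ref{lem:two_space_to_one} converts "$\mathbf{P}_{G,H}(\text{good}) = 1-o(1)$" into "$\mathbf{P}_G(\mathbf{P}_H(\text{good} \mid G) = 1-o(1)) = 1-o(1)$." The main delicacy throughout is bookkeeping the two sources of randomness and making sure the union bound over incompatibility systems in part (iv) is taken over the genuinely relevant (local) data rather than over all of $\mathcal{F}$.
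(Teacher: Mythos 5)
Your treatment of Properties (i)--(iii) is essentially the same as the paper's (Chernoff, union bound over vertices/pairs, Lemma~\ref{lem:two_space_to_one} for the two layers of randomness; the binomial tail calculation for (iii) is verbatim). A minor slip in (ii): the Chernoff exponent should be $t^2/\lambda$ with $t = o(n^2 q)$ and $\lambda \le n^2 q$, giving roughly $o(n^2 q)$, not $o(n^2 q^2)$ as you wrote; the extra factor of $q$ would in fact break the union bound for $p$ close to $\log n/n$, whereas the correct exponent does beat $4^n$ (taking, say, $\eta = (\log n)^{-1/4}$ and exponent $\Omega(\eta^2 n^2 q) \gg n$).

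The genuine gap is in your treatment of Property (iv). You correctly observe that a naive union bound over all incompatibility systems $\mathcal{F}$ is hopeless, and you propose instead to union-bound over the local data $S_{v,e}$ (the up to $\mu np$ edges at $v$ incompatible with $e$), at a cost of $\binom{np}{\mu np} \approx 2^{H(\mu)np}$ against a Chernoff bound of $e^{-\Omega(\varepsilon^3 np)}$. As you half-concede, this requires $H(\mu) \ll \varepsilon^3$, but for $\mu = 1-\tfrac{1}{\sqrt 2}-2\varepsilon \approx 0.29$ the entropy $H(\mu)$ is bounded away from $0$ while $\varepsilon$ may be taken arbitrarily small, so the bound does not close --- this is not a ``harmless loss'' to absorb. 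The resolution is that no union bound over $\mathcal{F}$ (nor over $S_{v,e}$) is required at all, because of the quantifier order in the statement: the claim is that a.a.s.\ $G$ is such that \emph{for every} $\mathcal{F}$ the subsequent random choice of $(A,B,H)$ a.a.s.\ behaves well. The system $\mathcal{F}$ is fixed \emph{before} the randomness of $A$, $B$, $H$ is revealed. Hence one only needs a bound on $\mathbf{P}_{A,B,H}(\text{(iv) fails})$ that is uniform over all $\mu np$-bounded $\mathcal{F}$, for every ``good'' instance of $G$ --- and this follows directly from concentration of the hypergeometric distribution (for how many endpoints of $S_{v,e}$ land across the bisection) and Chernoff (for the $\tfrac{\varepsilon}{4}$-thinning), with a union bound only over the $O(n^2 p)$ pairs $(v,e)$. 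This is the paper's route; your extra layer of union-bounding over $S_{v,e}$ is both unnecessary and, as computed, fails.
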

\begin{proof}
Note that the distribution of $H$ is identical to that of the random bipartite graph
with parts of sizes $|A|=|B|=n_0$ obtained by taking each edge independently with probability $q$.
Hence Properties (i) and (ii) follow from Chernoff's inequality, the union bound,
and Lemma \ref{lem:two_space_to_one}. Similarly, Property (iv) follows
from Chernoff's inequality, the concentration of hypergeometric distribution, and the union bound.

To prove Property (iii), note that the probability of a fixed pair of sets $A'$ and $B'$
of size $|A'| = |B'| = k \le e^{-2} \xi n_0$ to satisfy $e_H(A', B') > k \xi n_0 q$ is at most
\[
	{k^2 \choose k \xi n_0 q} q^{k \xi n_0 q}
	\le
	\left( \frac{ek}{\xi n_0} \right)^{k \xi n_0 q}
	\le
	e^{-k \xi n_0 q}
	\ll
	n^{-3k},
\]
where the last inequality follows since $p \gg \frac{\log n}{n}$ and $q = \frac{\varepsilon}{4}p$.
By taking the union bound over all choices of $A'$ and $B'$, we see that the probability
of the existence of a pair of sets $A'$ and $B'$ violating (iii) is at most
$\sum_{k=1}^{e^{-2}\xi n_0} {n \choose k}^2 n^{-3k} \ll 1$.
\end{proof}

Throughout the proof, we will often use the phrase `condition on the
outcome of Lemmas \ref{lem:rg_typical_0} and \ref{lem:rg_typical}', to indicate that we
first condition on $G = G(n,p)$ satisfying Lemmas \ref{lem:rg_typical_0} and
\ref{lem:rg_typical},
and then given a $\mu np$-bounded incompatibility system $\mathcal{F}$
over $G$, condition on $H$ satisfying Lemma \ref{lem:rg_typical}.

\medskip

For $i=0,1,\ldots,T-1$, let $M_i^{(1)} = M_i^{(0)} \setminus M_i$
be the set of edges that were first chosen but then removed at the $i$-th stage.
For a set of vertices $X$, we use the notation $X \cap M_i$
to denote the set of vertices in $X$ that intersect an edge in $M_i$
(similarly define $X \cap M_i^{(0)}$ and $X \cap M_i^{(1)}$).
We also use the notation $x \pm \alpha$ to denote
a quantity between $x - \alpha$ and $x+\alpha$. A combination of two such
estimates $x \pm \alpha = x \pm \alpha'$ means that $|\alpha'| \ge |\alpha|$,
i.e., that the estimate on the right hand side is rougher than that
on the left hand side. The following lemma gives estimate on the number of
edges in $M_i$ intersecting a fixed given set.

\begin{lem} \label{lem:chosen_edge_control}
Condition on the outcome of Lemmas \ref{lem:rg_typical_0} and \ref{lem:rg_typical}.
For an integer $i$ with $0 \le i \le T-1$ and a positive real $\xi_i$
satisfying $\delta \leq \xi_i \le \frac{\varepsilon}{32}$,
suppose that all vertices $x \in V(H_i)$ have degree
$d_{H_i}(x) =  ((1-\delta)^i \pm \xi_i)n_0 q$.
For a vertex $v \in V(H_i)$ and a set $X \subseteq N_{H_i}(v)$
satisfying $|X| \ge 4\delta^{-2}$,
\begin{itemize}
\vspace{-0.1cm}
  \setlength{\itemsep}{0pt} \setlength{\parskip}{0pt}
  \setlength{\parsep}{0pt}
\item[(i)] If $\Gamma$ is a fixed subset of edges incident to $X$, then
$|\Gamma \cap M_i^{(0)}| = (1 \pm 9\varepsilon^{-1} \xi_i)\frac{\delta |\Gamma|}{(1-\delta)^{i}n_0 q}$
with probability $1 - e^{-\Omega(|\Gamma| \delta^3 / n_0q)}$.
\item[(ii)] $|X \cap M_i^{(1)}| \le 5\delta^2|X|$ with probability $1 - e^{-\Omega(\delta^2|X|)}$.
\item[(iii)] $|X \cap M_i| = (1 \pm (15\varepsilon^{-1} \xi_i + 5\delta))\delta |X|$ with probability $1 - e^{-\Omega(\delta^3|X|)}$.
\end{itemize}
\end{lem}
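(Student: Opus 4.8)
The plan is to prove all three parts via concentration inequalities, treating the random choices made in Step 2-1 (each edge of $H_i$ is kept in $M_i^{(0)}$ independently with probability $\frac{\delta n_i}{m_i}$) as the source of randomness. The first key observation I would record is an estimate on the ``keep probability'' $\frac{\delta n_i}{m_i}$. Since we conditioned on all vertices of $H_i$ having degree $((1-\delta)^i\pm\xi_i)n_0q$ and $|A_i|=|B_i|=n_i$, we have $m_i = e(H_i) = \frac12\sum_{x\in V(H_i)}d_{H_i}(x) = ((1-\delta)^i\pm\xi_i)n_in_0q$, so the keep probability is $\frac{\delta n_i}{m_i} = \frac{\delta}{((1-\delta)^i\pm\xi_i)n_0q} = (1\pm O(\varepsilon^{-1}\xi_i))\frac{\delta}{(1-\delta)^in_0q}$, using $\xi_i\le\frac{\varepsilon}{32}$ and $(1-\delta)^i\ge(1-\delta)^T\ge\frac{\varepsilon}{4}$ so that the relative error $\xi_i/(1-\delta)^i$ is at most a constant multiple of $\varepsilon^{-1}\xi_i$. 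Tracking the constant carefully should yield the factor $9\varepsilon^{-1}\xi_i$ claimed in (i).

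For part (i): $|\Gamma\cap M_i^{(0)}|$ is a sum of $|\Gamma|$ independent indicator variables, each with success probability equal to the keep probability computed above, so $\BBE|\Gamma\cap M_i^{(0)}| = (1\pm O(\varepsilon^{-1}\xi_i))\frac{\delta|\Gamma|}{(1-\delta)^in_0q}$. Applying Chernoff's inequality (the last part of the Chernoff bound stated in the excerpt) with a relative deviation of a further constant multiple of $\varepsilon^{-1}\xi_i$ gives a failure probability of $e^{-\Omega((\varepsilon^{-1}\xi_i)^2\BBE|\Gamma\cap M_i^{(0)}|)}$; since $\xi_i\ge\delta$ and the mean is $\Theta(|\Gamma|\delta/(n_0q))$, this is $e^{-\Omega(|\Gamma|\delta^3/n_0q)}$ as required. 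Part (ii) is more delicate because $X\cap M_i^{(1)}$ is not a sum of independent variables — whether a vertex $x\in X$ is matched by an edge that later gets removed depends on the choices at several edges. The cleanest route is a first-moment / union-type argument: a vertex $x\in X$ lies in $M_i^{(1)}$ only if the chosen edge at $x$ shares an endpoint with another chosen edge, and conditioned on $x$'s edge being chosen, the probability that some neighbouring edge is also chosen is at most (degree of the two endpoints) times the keep probability, which is $O(\delta)$; so $\BBE|X\cap M_i^{(1)}|\le O(\delta)\cdot\BBE|X\cap M_i^{(0)}| = O(\delta^2|X|)$. To get concentration — and in particular the bound $5\delta^2|X|$ holding with probability $1-e^{-\Omega(\delta^2|X|)}$ — I would either invoke a bounded-differences (Azuma–Hoeffding / McDiarmid) inequality, noting that resampling the choice at a single edge changes $|X\cap M_i^{(1)}|$ by $O(1)$, or better, bound $|X\cap M_i^{(1)}|$ above by a sum over pairs of adjacent chosen edges and use a Chernoff bound on that dominating sum after revealing choices in an appropriate order. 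Here Lemma \ref{lem:rg_typical_0} is what we need to rule out degenerate local structure (codegrees $\le 1$ apart from a single exception) so that the number of such conflicting pairs at $x$ is genuinely controlled by the degree bound.

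Finally, part (iii) follows by combining (i) applied with $\Gamma$ equal to the set of all edges of $H_i$ incident to $X$ — note $|\Gamma| = (1\pm\xi_i)(1-\delta)^in_0q\,|X|$ up to the double-counting of edges inside $X$, which is negligible since $|X|\le n_0$ and $X\subseteq N_{H_i}(v)$ forces $|X|=o(n_0)$ — so $|X\cap M_i^{(0)}| = (1\pm(15\varepsilon^{-1}\xi_i+O(\delta)))\delta|X|$, and then subtracting the bound from (ii): $|X\cap M_i| = |X\cap M_i^{(0)}| - |X\cap M_i^{(1)}| = (1\pm(15\varepsilon^{-1}\xi_i+5\delta))\delta|X|$. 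One must check the constants line up, in particular that the cumulative error from (i) plus the $5\delta^2|X|$ from (ii) is absorbed into $15\varepsilon^{-1}\xi_i+5\delta$, and union-bound the two failure probabilities, each of which is $e^{-\Omega(\delta^3|X|)}$ given $|X|\ge4\delta^{-2}$. I expect part (ii) — getting exponential concentration for $|X\cap M_i^{(1)}|$ despite the lack of full independence — to be the main technical obstacle; everything else is a fairly routine Chernoff-plus-union-bound computation once the keep probability is pinned down.
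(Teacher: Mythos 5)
Your treatment of part (i) matches the paper: compute the keep probability $\delta n_i/m_i = (1\pm O(\varepsilon^{-1}\xi_i))\delta/((1-\delta)^i n_0 q)$ from the degree hypothesis, observe $|\Gamma\cap M_i^{(0)}|$ is a sum of independent indicators, and apply Chernoff; and part (iii) is indeed the sandwich $|\Gamma_X\cap M_i^{(0)}|-|X\cap M_i^{(1)}|\le |X\cap M_i|\le |\Gamma_X\cap M_i^{(0)}|$ combined with (i) and (ii). You also correctly spotted that Lemma \ref{lem:rg_typical_0} is the ingredient that tames the local structure. So the framework is right.

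The gap is in part (ii), which you flag as the hard step but do not resolve, and neither of your two candidate routes works as stated. The bounded-differences route fails because the Lipschitz constant is \emph{not} $O(1)$: resampling an edge $f=\{v,z\}$ incident to the centre vertex $v$ can flip the conflict status of \emph{every} edge $\{v,w'\}\in M_i^{(0)}$, and each such $w'$ may lie in $X$, so the worst-case change is of order $\deg_{H_i}(v)$ — far too large for McDiarmid. (This is precisely why the paper pulls out the term $|\Gamma_v\cap M_i^{(0)}|$ and bounds it separately by a direct union bound.) Your second route — dominating $|X\cap M_i^{(1)}|$ by a sum of indicators over pairs of adjacent chosen edges — does give a first-moment bound $O(\delta^2|X|)$, but those pair-indicators share edges and are not independent, so Chernoff does not apply to the dominating sum without further work.

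The paper's actual resolution is a different decomposition that you would need to discover: write $|X\cap M_i^{(1)}|\le |\Gamma_v\cap M_i^{(0)}|+\sum_{x\in X}\mathbf{1}_x$, where $\mathbf{1}_x$ is the indicator that either two edges of $M_i^{(0)}\cap E(H_i')$ meet at $x$, or a path of length two in $M_i^{(0)}\cap E(H_i')$ starts at $x$, with $H_i'=H_i-\{\text{edges at }v\}$. The first term is handled by a crude union bound (giving $\le\frac{\delta^2}{2}|X|$ with failure probability $e^{-\Omega(\delta^2|X|)}$). For the second, $\mathbf{1}_x$ depends only on edges of $H_i'$ meeting $\{x\}\cup N_{H_i'}(x)$; since $X\subseteq N_{H_i}(v)$ and Lemma \ref{lem:rg_typical_0} forces codegrees with $v$ to be at most $1$ (with a single exceptional vertex of codegree $2$), one can pass to $X'\subseteq X$ with $|X\setminus X'|\le 2$ on which those neighbourhoods are pairwise disjoint, making the $\mathbf{1}_x$ genuinely independent and Chernoff applicable after bounding $\BBE[\mathbf{1}_x]\le 3\delta^2$. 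This is the missing idea: not a smarter concentration inequality, but a split into a crudely-bounded term plus a genuinely independent sum obtained by discarding $v$'s edges.
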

\begin{proof}
The following estimate deduced from $(1-\delta)^{i} \ge (1-\delta)^T = \frac{\varepsilon}{4}$
will be repeatedly used throughout the proof:
\begin{align} \label{eqn:chosen_edge_prob}
	(1-\delta)^{i} \pm \xi_i = (1 \pm 4\varepsilon^{-1}\xi_i)(1-\delta)^{i}.	
\end{align}
Since $m_i = n_i ((1-\delta)^i \pm \xi_i)n_0 q$,  by linearity of expectation, we have
\[
	\BBE[|\Gamma \cap M_i^{(0)}|]
	= |\Gamma| \frac{\delta n_i}{m_i}
	= \frac{\delta}{((1-\delta)^i \pm \xi_i) n_0 q} |\Gamma|
	= (1 \pm 8\varepsilon^{-1} \xi_i) \frac{\delta}{(1-\delta)^i n_0 q} |\Gamma|,
\]
where the last equality follows from \eqref{eqn:chosen_edge_prob}.
Part (i) follows by Chernoff's inequality
since $|\Gamma \cap M_i^{(0)}|$ is a sum of independent random variables (each
indicating whether an edge is chosen or not).

To prove part (ii), recall that $X \subseteq N_{H_i}(v)$ for some vertex $v$.
Let $H_i'$ be the subgraph of $H_i$ obtained by
removing all edges incident to $v$. For each vertex $x \in X$,
let ${\bf 1}_x$ be the indicator random variable of the event that
(a) there exist two edges in $M_i^{(0)} \cap E(H_i')$ incident to $x$, or
(b) there exists a path of length two that has $x$ as its endpoint
and consists of edges in $M_i^{(0)} \cap E(H_i')$.
Let $\Gamma_v$ be the set of edges of $H_i$ incident to $v$, and note that
\begin{align} \label{eq:second_order_estimate}
	|X \cap M_i^{(1)}|
	\le |\Gamma_v \cap M_i^{(0)}| + \sum_{x \in X} {\bf 1}_{x}.
\end{align}
A bound on the first term can be obtained from union bound as follows:
\begin{align*}
	\BFP\left(|\Gamma_v \cap M_i^{(0)}| > \frac{\delta^2}{2}|X|\right)
	&\le
	\sum_{k=\delta^2|X|/2}^{\infty} {d_{H_i}(v) \choose k} \left(\frac{\delta n_i}{m_i} \right)^{k}
	<
	\sum_{k=\delta^2|X|/2}^{\infty} \left( \frac{e\delta n_i d_{H_i}(v)}{m_i k} \right)^{k}.
\end{align*}
Since $m_i = n_i((1-\delta)^i \pm \xi_i)n_0 q$ and $d_{H_i}(v) = ((1-\delta)^i \pm \xi_i)n_0 q$,
it follows from the estimates given above that
\begin{align} \label{eq:soe_secondterm}
	\BFP\left(|\Gamma_v \cap M_i^{(0)}| > \frac{\delta^2}{2}|X|\right)
	\le
	\sum_{k=\delta^2|X|/2}^{\infty} \left( \frac{2e\delta}{ k} \right)^{k}
	< e^{-\Omega(\delta^2|X|)},
\end{align}
where the last inequality follows since $2e\delta / k < 4e / (\delta|X|) \le e\delta \le \frac{1}{2}$.

For the second term on the right-hand-side of \eqref{eq:second_order_estimate},
even though the events $\{{\bf 1}_x\}_{x \in X}$ are not necessarily independent,
we claim that there exists a large subset $X' \subseteq X$
for which they are independent. To see this, note that
since $X \subseteq N_{H_i}(v)$ for some vertex $v$, and
$G(n,p)$ satisfies the event in Lemma~\ref{lem:rg_typical_0},
there exists at most one vertex $w \neq v$ for which $|X \cap N_{H_i}(w)| = 2$ and all
other vertices $w' \neq w,v$ have $|X \cap N_{H_i}(w')| \le 1$. Define $X' = X \setminus N_{H_i}(w)$
if such vertex $w$ exists, and $X' = X$ otherwise (note that $|X| - |X'| \le 2$).
Note that ${\bf 1}_x$ depends only on the set of edges in $H_i'$
that intersect $\{x\} \cup N_{H_i'}(x)$. Since the sets $\{x\} \cup N_{H_i'}(x)$ are disjoint
and $H_i'$ is bipartite,  the events ${\bf 1}_x$ are independent for $x \in X'$
 (conditioned on $G(n,p)$ satisfying the event in Lemma~\ref{lem:rg_typical_0}).
For a fixed $x \in X'$, the probability of event (a) is at most
\[
	{d_{H_i}(x) \choose 2} \Big(\frac{\delta n_i}{m_i}\Big)^2
	\le
	\frac{((1-\delta)^{i} \pm \xi_i)^2 (n_0 q)^2}{2}
		\Big(\frac{\delta}{((1-\delta)^{i} \pm \xi_i) n_0 q}\Big)^2
	\le
	\delta^2,
\]
by \eqref{eqn:chosen_edge_prob} and $\xi_i \le \frac{\varepsilon}{32}$. Similarly, the probability of event (b) is at most

$$
\sum_{y \in N_{H_i}(x)} d_{H_i}(y) \Big(\frac{\delta n_i}{m_i}\Big)^2 \leq
((1-\delta)^{i} \pm \xi_i)^2 (n_0 q)^2 \Big(\frac{\delta}{((1-\delta)^{i} \pm \xi_i) n_0 q}\Big)^2
	\le	2\delta^2.
$$
Therefore $\BBE[{\bf 1}_x] \le 3\delta^2$,
and by Chernoff's inequality we obtain
\begin{align} \label{eq:soe_firstterm}
	\BFP\left(\sum_{x \in X'} {\bf 1}_x \le 4\delta^2|X'|\right) \ge 1 - e^{-\Omega(\delta^2|X'|)}.
\end{align}
Since $|X| \le |X'| + 2$ and $|X| \ge 4\delta^{-2}$, part (ii) follows from
\eqref{eq:second_order_estimate}, \eqref{eq:soe_secondterm}, and \eqref{eq:soe_firstterm}.

To prove part (iii), let $\Gamma_X$ be the set of edges incident to $X$, and
note that $|\Gamma_X| = ((1-\delta)^i \pm \xi_i)n_0 q |X|$. By part (i), we have
$|\Gamma_X \cap M_i^{(0)}| = (1 \pm 15\varepsilon^{-1} \xi_i) \delta|X|$
with probability $1 - e^{-\Omega(\delta^3|X|)}$. By part (ii), we have
$|X \cap M_i^{(1)}| \le 5\delta^2|X|$
with probability $1 - e^{-\Omega(\delta^2|X|)}$. Now part (iii) follows since
\[
	|\Gamma_X \cap M_i^{(0)}| - |X \cap M_i^{(1)}|
	\le
	|X \cap M_i|
	\le
	|\Gamma_X \cap M_i^{(0)}|. \qedhere
\]
\end{proof}

In order to prove Lemma \ref{lem:gnp_r_t_strong} (ii), we need to understand how the
edges of $D_G(M)$ get removed in $D_G(M;\mathcal{F})$. In particular,
we need to track these changes with each iteration of the random algorithm.
The following technical definitions are made with this purpose in mind.

Suppose that an instance $G$ of $G(n,p)$ and a $\mu np$-bounded incompatibility system
$\mathcal{F}$ over $G$ are fixed.
Let $e = \{a, b\} \in E(G)$ and $e' = \{a', b'\} \in E(G)$ be two edges such that
$a,a' \in A$ and $b, b' \in B$. We say that $e'$ is
{\em $A$-bad for $e$} if $\{a, b'\} \in E(G)$ is an edge compatible
with $e$, but incompatible with $e'$. Similarly, we say that $e'$
is {\em $B$-bad for $e$} if $\{b, a'\} \in E(G)$ is an edge compatible
with $e$, but incompatible with $e'$.

For an edge $e = \{a,b\}$ with $a \in A$ and $b \in B$, define
\begin{align*}
	A_e^{(G)} &= \{ x \in A : \{b,x\} \in E(G), \{b,x\} \text{ and } \{a,b\} \text{ are compatible} \}
	\quad \text{and} \\
	B_e^{(G)} &= \{ y \in B : \{a,y\} \in E(G), \{a,y\} \text{ and } \{a,b\} \text{ are compatible} \}.
\end{align*}
Similarly define $A_e^{(H)}$ and $B_e^{(H)}$ by considering the edges of $H$
instead of the edges of $G$ in the definition above.

\begin{rem}
For odd $n$, we need to extend the definitions
above to edges whose both endpoints are in $A$. In this case, for an edge $e = \{a_1, a_2\}$ with $a_1, a_2 \in A$, we consider the edge as an ordered pair $(a_1, a_2)$ to distinguish
$(a_1, a_2)$ and $(a_2, a_1)$, and define an edge $e'=\{a',b'\}$ (where $a' \in A$ and $b' \in B$) 
to be {\em $A$-bad for $(a_1, a_2)$} if $\{a_1, b'\} \in E(G)$ is compatible with $e$ but
incompatible with $e'$ (there is no need to consider $B$-bad edges for $e$).
Also, instead of having a pair of sets $A_{e}^{(G)}$ and $B_{e}^{(G)}$ as above,
we define two sets $B_{(a_1, a_2)}^{(G)}$ and $B_{(a_2, a_1)}^{(G)}$ 
as 
\begin{align*}
	B_{(a_1, a_2)}^{(G)} &= \{ y \in B : \{a_1,y\} \in E(G), \{a_1,y\} \text{ and } \{a_1,a_2\} \text{ are compatible} \}
	\quad \text{and} \\
	B_{(a_2, a_1)}^{(G)} &= \{ y \in B : \{a_2,y\} \in E(G), \{a_2,y\} \text{ and } \{a_2,a_1\} \text{ are compatible} \}.
\end{align*}
Similarly define the sets $B_{(a_1, a_2)}^{(H)}, B_{(a_2, a_1)}^{(H)}$.
As this modified definition becomes relevant only in few places, we
will abuse notation and use $A$-bad edges and $B_e^{(G)}, B_e^{(H)}$ to denote
both ordered pairs $(a_1, a_2)$ and $(a_2, a_2)$.
\end{rem}

Define
\[
	\xi_i = \delta (1+21\varepsilon^{-1}\delta)^{i}.
\]
For each $i = 0,1,\ldots, T$, we say that
$H_i = H[A_i \cup B_i]$ is {\em normal} if
\begin{itemize}
  \setlength{\itemsep}{0pt} \setlength{\parskip}{0pt}
  \setlength{\parsep}{0pt}
\item[(i)] $n_i = |A_i| = |B_i| = \Big((1-\delta)^{i} \pm \xi_i \Big) n_0$.
\item[(ii)] For all vertices $v \in V(H_i)$, $d_{H_i}(v) = \Big((1-\delta)^{i} \pm \xi_i\Big) n_0 q$.
\item[(iii)] For all vertices $v \in V(G)$, $|N_{G}(v) \cap A_i| = \Big((1-\delta)^{i} \pm \xi_i\Big) n_0 p$ and $|N_{G}(v) \cap B_i| = \Big((1-\delta)^{i} \pm \xi_i\Big) n_0 p$.
\item[(iv)] For all edges $e \in E(G)$, $|A_e^{(G)} \cap A_i| = \Big((1- \delta)^{i} \pm\xi_i \Big) |A_e^{(G)}|$, and a similar estimate holds for the sets $B_e^{(G)}, A_e^{(H)}, B_e^{(H)}$.
\item[(v)] If $i\neq 0$, then for all edges $e \in E(G)$, there are at most
$\Big(\mu + \frac{\varepsilon}{3}\Big) \delta(1-\delta)^{i-1} |B_e^{(G)}|$
$A$-bad edges for $e$ in $M_{i-1}$ (a similar estimate for $B$-bad edges).
\end{itemize}
Note that since $T = \frac{\ln(4/\varepsilon)}{-\ln(1-\delta)} \le \frac{\ln(4/\varepsilon)}{\delta}$
and $\delta = e^{-22\varepsilon^{-1} \ln \varepsilon^{-1}}$, the error parameter $\xi_i$ satisfies
\begin{align} \label{eq:error_param_bound}
	\xi_i \le \xi_T
	\le \delta(1 + 21\varepsilon^{-1}\delta)^{T}
	\le \delta e^{21\varepsilon^{-1} \ln(4/\varepsilon)}
	\le e^{-\varepsilon^{-1}}.
\end{align}
The following lemma asserts that with high probability all $H_i$ are normal
(for $i=0,1,\ldots, T$).

\begin{lem} \label{lem:induction_step}
Conditioned on the outcome of Lemmas \ref{lem:rg_typical_0} and \ref{lem:rg_typical},
the graph $H_i$ is a.a.s.~normal for all $i=0,1,\ldots,T$.
\end{lem}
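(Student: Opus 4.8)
The plan is to induct on $i$, showing that if $H_i$ is normal then with probability $1 - e^{-\Omega(\delta^3 (1-\delta)^i n_0 q)}$ the graph $H_{i+1}$ is normal as well; since $T = O(\delta^{-1}\ln \varepsilon^{-1})$ is bounded and $(1-\delta)^i n_0 q \ge \frac{\varepsilon}{4} n_0 q \gg \log n$, a union bound over the (constantly many) steps will give the a.a.s.\ conclusion. The base case $i=0$ is exactly Lemma~\ref{lem:rg_typical}, since $\xi_0 = \delta$ and $(1-\delta)^0 = 1$: properties (i)--(iv) of normality for $H_0$ are the conclusions of parts (i)--(iv) of that lemma (with $\xi$ there taken appropriately), and property (v) is vacuous for $i=0$.

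For the inductive step, assume $H_i$ is normal. The key tool is Lemma~\ref{lem:chosen_edge_control}, which (with $\xi_i$ as defined here, noting $\delta \le \xi_i \le e^{-\varepsilon^{-1}} \le \frac{\varepsilon}{32}$ by \eqref{eq:error_param_bound}) controls how a given set is cut down by passing to the matching $M_i$. Concretely I would proceed quantity by quantity:
\begin{itemize}
\item[(a)] For \textbf{property (i)} of $H_{i+1}$: apply Lemma~\ref{lem:chosen_edge_control}(iii) with $v$ an arbitrary vertex and $X = N_{H_i}(v)$ (of size $((1-\delta)^i \pm \xi_i)n_0 q \ge 4\delta^{-2}$) to get that the number of vertices of $A_i$ (resp.\ $B_i$) matched is $(1 \pm (15\varepsilon^{-1}\xi_i + 5\delta))\delta n_i$; hence $n_{i+1} = n_i - |A_i \cap M_i| = (1 - \delta(1 \pm (15\varepsilon^{-1}\xi_i+5\delta)))n_i$, and one checks this equals $((1-\delta)^{i+1} \pm \xi_{i+1})n_0$ using the recursion $\xi_{i+1} = \xi_i + 21\varepsilon^{-1}\delta \xi_i$ and the inductive bound on $n_i$. (The $n_0 q$ factor in $|X|$ does not prevent $|X| \ge 4\delta^{-2}$ because $n_0 q \gg \log n$.)
\item[(b)] For \textbf{property (ii)}: fix $v \in V(H_{i+1})$. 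Its degree drops by the number of its $H_i$-neighbours that get matched, i.e.\ by $|N_{H_i}(v) \cap M_i|$, which Lemma~\ref{lem:chosen_edge_control}(iii) with $X = N_{H_i}(v)$ controls as $(1 \pm (15\varepsilon^{-1}\xi_i + 5\delta))\delta d_{H_i}(v)$; so $d_{H_{i+1}}(v) = (1 - \delta(1\pm\cdots)) d_{H_i}(v)$, and again this lands in $((1-\delta)^{i+1} \pm \xi_{i+1})n_0 q$.
\item[(c)] For \textbf{property (iii)}: here $v \in V(G)$ need not lie in $V(H_i)$, so I cannot directly write $N_G(v) \cap A_i$ as a neighbourhood \emph{in $H_i$}. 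The fix is to note that the vertices removed from $A_i$ are $A_i \cap M_i$, and to bound $|(N_G(v) \cap A_i) \cap M_i|$. I would do this by observing that $M_i$ is obtained from $M_i^{(0)}$, and apply Lemma~\ref{lem:chosen_edge_control}(i) to the set $\Gamma$ of $H_i$-edges incident to $N_G(v) \cap A_i$ (whose size is controlled by normality properties (ii),(iii) of $H_i$), getting $|\Gamma \cap M_i^{(0)}| = (1 \pm 9\varepsilon^{-1}\xi_i)\frac{\delta|\Gamma|}{(1-\delta)^i n_0 q}$, which after dividing by the degree factor is $(1\pm\cdots)\delta |N_G(v)\cap A_i|$; the loss from $M_i^{(0)} \setminus M_i$ is negligible by a second-order estimate as in the proof of Lemma~\ref{lem:chosen_edge_control}(ii). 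This gives $|N_G(v) \cap A_{i+1}| = (1-\delta(1\pm\cdots))|N_G(v)\cap A_i| = ((1-\delta)^{i+1}\pm\xi_{i+1})n_0 p$.
\item[(d)] For \textbf{property (iv)}: identical in form to (c), applied to the sets $A_e^{(G)}, B_e^{(G)}, A_e^{(H)}, B_e^{(H)}$ in place of $N_G(v)$ — using normality property (iv) of $H_i$ as the inductive input and noting each such set, being of size $\Theta(n_0 q)$ or $\Theta(n_0 p)$, is large enough for Lemma~\ref{lem:chosen_edge_control}.
\item[(e)] For \textbf{property (v)} (which concerns $M_i$, i.e.\ the step from $H_i$ to $H_{i+1}$): fix an edge $e \in E(G)$; an $A$-bad edge $e'$ for $e$ corresponds to a vertex of $B_e^{(G)} \cap V(H_i)$ that is an endpoint of an $H_i$-edge whose \emph{other} endpoint lies in a certain set of $A$-vertices of size $\le (\mu + \frac{\varepsilon}{5})n_0 q$ (the $H$-incompatibility bound from Lemma~\ref{lem:rg_typical}(iv)). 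So the number of $A$-bad edges for $e$ landing in $M_i$ is at most $|\Gamma' \cap M_i^{(0)}|$ where $\Gamma'$ is the set of such $H_i$-edges, with $|\Gamma'| \le (\mu + \frac{\varepsilon}{5})n_0 q \cdot |B_e^{(G)} \cap B_i| \le (\mu + \frac{\varepsilon}{5})n_0 q \cdot ((1-\delta)^i \pm \xi_i)|B_e^{(G)}|$; Lemma~\ref{lem:chosen_edge_control}(i) then bounds this by $(1 + 9\varepsilon^{-1}\xi_i)(\mu + \frac{\varepsilon}{5})\delta(1-\delta)^{i-1}\cdots \le (\mu + \frac{\varepsilon}{3})\delta(1-\delta)^{i-1}|B_e^{(G)}|$, absorbing the $\frac{\varepsilon}{5} \to \frac{\varepsilon}{3}$ slack into the error factors using $\xi_i \le e^{-\varepsilon^{-1}}$.
\end{itemize}

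Finally I would collect the failure probabilities. Each application of Lemma~\ref{lem:chosen_edge_control} to a set $X$ or $\Gamma$ of size $\Omega((1-\delta)^i n_0 q)$ fails with probability $e^{-\Omega(\delta^3 (1-\delta)^i n_0 q)}$; the union is over $O(n)$ vertices $v$ (for (i),(ii),(iii)) and $O(n^2)$ edges $e$ (for (iv),(v)), and since $(1-\delta)^i n_0 q \ge \frac{\varepsilon^2}{16} n p \gg \log^9 n$ — indeed $n p \gg \log^8 n$ suffices — the factor $n^2$ is killed, so $H_{i+1}$ is normal with probability $1 - o(1)$ given $H_i$ normal. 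Summing over the $T+1 = O(1)$ values of $i$ finishes the proof. \textbf{The main obstacle} is bookkeeping the error parameter: one must verify that the single-step multiplicative error blow-up (a factor $1 + O(\varepsilon^{-1}\xi_i)$ from Lemma~\ref{lem:chosen_edge_control} plus the $5\delta$ additive terms) is exactly matched by the recursion $\xi_{i+1} = (1 + 21\varepsilon^{-1}\delta)\xi_i$, i.e.\ that $21\varepsilon^{-1}\delta$ is a large enough per-step budget to cover $15\varepsilon^{-1}\xi_i + 5\delta + (\text{lower order})$ after multiplying through by $(1-\delta)$; this is where the specific constants $21$, $15$, $9$, $5$ and the definition $\delta = e^{-22\varepsilon^{-1}\ln\varepsilon^{-1}}$ are calibrated, and it requires care but no new ideas. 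A secondary subtlety, already flagged in (c), is that properties (iii)--(v) concern sets indexed by vertices/edges of $G$ rather than subsets of $V(H_i)$, so one must phrase the reduction in terms of $M_i^{(0)}$-hits on $H_i$-edge-sets rather than $H_i$-neighbourhoods, and separately absorb the $M_i^{(0)} \setminus M_i$ discrepancy.
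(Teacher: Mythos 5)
Your overall structure matches the paper's proof: induction on $i$ with base case from Lemma~\ref{lem:rg_typical}, inductive step via Lemma~\ref{lem:chosen_edge_control}, and a union bound over the constantly many steps $i=0,\ldots,T$. Your treatment of properties (ii)--(v) is in the same spirit as the paper's, including the subtlety in (c) that sets like $N_G(v)\cap A_i$ are not $H_i$-neighbourhoods (the paper actually glosses over this as ``the same argument applied to the corresponding sets,'' and your explicit flag of the $M_i^{(0)}\setminus M_i$ discrepancy and reliance on the codegree bound from Lemma~\ref{lem:rg_typical_0} is reasonable, since that codegree condition is stated for $G$ and hence covers $N_G(v)$).

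However, item (a) contains a genuine error. You apply Lemma~\ref{lem:chosen_edge_control}(iii) with $X = N_{H_i}(v)$ for an arbitrary vertex $v$, and then assert that ``the number of vertices of $A_i$ (resp.\ $B_i$) matched is $(1\pm\cdots)\delta n_i$.'' This does not follow: with $X = N_{H_i}(v)$ the lemma yields an estimate for $|N_{H_i}(v)\cap M_i|$, a quantity of order $\delta\cdot(1-\delta)^i n_0 q$, not for $|A_i\cap M_i| = |M_i|$, which is of order $\delta n_i$. The lemma is only stated (and its proof of part (ii)/(iii) only works) for $X$ contained in a single vertex's $G$-neighbourhood, so you cannot take $X = A_i$ or $X=B_i$; and $N_{H_i}(v)$ for one $v$ gives no global handle on $|M_i|$. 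The paper derives property (i) by a different route: it first establishes property (ii) (the degree bound for $H_{i+1}$), which forces $m_{i+1} = n_{i+1}\bigl((1-\delta)^{i+1}\pm(1+20\varepsilon^{-1}\delta)\xi_i\bigr)n_0q$; it then invokes Lemma~\ref{lem:rg_typical}(iii) to ensure $n_{i+1} = \Omega(n)$, and Lemma~\ref{lem:rg_typical}(ii) (which says $e_H(A',B') = |A'||B'|q + o(n^2q)$ for all $A',B'$) with $A'=A_{i+1}, B'=B_{i+1}$ to solve for $n_{i+1}$. In other words, the size of $H_{i+1}$ is recovered from its edge count and the near-regularity of $H$, not by directly tracking matched vertices. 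You should replace your step (a) with this argument, or otherwise supply a separate concentration estimate for $|M_i|$ (e.g.\ via a first- and second-moment argument on $M_i^{(0)}$), since Lemma~\ref{lem:chosen_edge_control} as stated cannot give it to you.
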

\begin{proof}
We proceed by induction on $i$. For $i=0$, the statement follows immediately
since we conditioned on Lemma \ref{lem:rg_typical}.
For $i \ge 0$, suppose that $H_i$ is normal.
For each vertex $v \in V(H_i)$, since $d_{H_i}(v) = ((1-\delta)^i \pm \xi_i) n_0 q$,
by applying Lemma \ref{lem:chosen_edge_control} (iii)
with $X = N_{H_i}(v)$, we see that
\begin{eqnarray*}
	|N_{H_i}(v) \cap M_i|
	&=& (1 \pm (15\varepsilon^{-1}\xi_i + 5\delta)) \delta ((1-\delta)^{i} \pm \xi_i)n_0 q\\
	&=&(1 \pm (15\varepsilon^{-1}\xi_i + 5\delta)) \delta (1 \pm 4\varepsilon^{-1}\xi_i)(1-\delta)^{i} n_0 q\\
	&=& (1 \pm 20\varepsilon^{-1}\xi_i) \delta (1-\delta)^{i} n_0 q
\end{eqnarray*}
with probability $1 - e^{-\Omega(\delta^3(1-\delta)^{i}n_0q)} = 1 - n^{-\omega(1)}$. Since
\begin{align*}
	d_{H_{i+1}}(v)
	&= d_{H_i}(v) - |N_{H_i}(v) \cap M_i| = ((1-\delta)^{i} \pm \xi_i) n_0 q - (1 \pm 20\varepsilon^{-1}\xi_i) \delta (1-\delta)^{i} n_0 q \\
	&= (1-\delta)^{i+1} n_0 q \pm (1 + 20 \varepsilon^{-1}\delta)\xi_i n_0 q=((1-\delta)^{i+1} \pm \xi_{i+1})n_0q,
\end{align*}
by taking the union bound over all vertices,
we see that Property (ii) of $H_{i+1}$ being normal a.a.s. holds.
Properties (iii) and (iv) follow by the same argument applied to the
corresponding sets. Furthermore, if Property (ii) holds, then $H_{i+1}$ is a balanced
bipartite graph with $n_{i+1}$ vertices in each part
whose number of edges is
\[
	m_{i+1} = n_{i+1} \Big((1-\delta)^{i+1}  \pm (1 + 20 \varepsilon^{-1}\delta)\xi_i\Big)n_0 q.
\]
By Lemma \ref{lem:rg_typical} (iii), we have that $n_{i+1}$ is linear in $n$. Then part (ii) of the same lemma implies that
$n_{i+1} = ((1-\delta)^{i+1} \pm \xi_{i+1})n_0$, proving Property (i).

For a fixed edge $e=\{a,b\} \in E(G)$ with $a \in A$ and $b \in B$,
let $\Gamma_{e,A}$ be the set of $A$-bad edges for $e$ in $H_i$.
Each $A$-bad edge in $H_i$ can be accounted for by first taking
a vertex $x \in B_e^{(G)} \cap B_i$, and then counting the number of edges $\{x,y\} \in E(H_i)$
that are incompatible with $\{a,x\}$. This gives
\[
	|\Gamma_{e,A}|
	= \sum_{x \in B_{e}^{(G)} \cap B_i} \Big(d_{H_i}(x) - |A_{\{x,a\}}^{(H)} \cap A_i|\Big)
	= ((1-\delta)^i \pm \xi_i) \sum_{x \in B_{e}^{(G)} \cap B_i} \Big(n_0 q - |A_{\{x,a\}}^{(H)}|\Big).
\]
By Lemma \ref{lem:rg_typical} (i) and (iv), we have
$n_0 q - |A_{\{x,a\}}^{(H)}| \le (\mu + \frac{\varepsilon}{4})n_0 q$, which in turn gives
\begin{align*}
	|\Gamma_{e,A}|
	&\le ((1-\delta)^i \pm \xi_i) \sum_{x \in B_{e}^{(G)} \cap B_i} \Big(\mu + \frac{\varepsilon}{4}\Big) n_0 q
	= ((1-\delta)^i \pm \xi_i)^2 \Big(\mu + \frac{\varepsilon}{4}\Big) n_0 q |B_e^{(G)}|.
\end{align*}
By Lemma \ref{lem:chosen_edge_control} (i), with probability $1 - n^{-\omega(1)}$,
\begin{align*}
	|\Gamma_{e,A} \cap M_i|
	\le |\Gamma_{e,A} \cap M_i^{(0)}|
	\le (1 + 9\varepsilon^{-1}\xi_i) \frac{ \delta |\Gamma_{e,A}|}{(1-\delta)^{i}n_0q}
	\le
	(1 + 20\varepsilon^{-1}\xi_i) \Big(\mu + \frac{\varepsilon}{4}\Big) \delta (1-\delta)^{i} |B_e^{(G)}|.
\end{align*}
Equation \eqref{eq:error_param_bound} implies $(1 + 20\varepsilon^{-1}\xi_i)(\mu + \frac{\varepsilon}{4}) \le \mu + \frac{\varepsilon}{3}$, and thus Property (v) for $A$-bad edges follows
by taking the union bound over all edges. The conclusion for $B$-bad edges follows
by a similar argument.
\end{proof}

We show that $\Phi$ successfully terminates as long as the final iteration is normal.

\begin{lem} \label{lem:final_pm}
Conditioned on the outcome of Lemmas \ref{lem:rg_typical_0} and \ref{lem:rg_typical},
if $H_T$ is normal, then it contains a perfect matching.
\end{lem}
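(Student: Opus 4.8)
The plan is to verify Hall's condition for the balanced bipartite graph $H_T$ (I describe the even case; the odd case needs only cosmetic changes to accommodate the vertex $v_*$). The key structural point is that $H_T=H[A_T\cup B_T]$ is an \emph{induced} subgraph of $H$, so $e_{H_T}(X,Y)=e_H(X,Y)$ for all $X\subseteq A_T$ and $Y\subseteq B_T$; hence both the discrepancy estimate of Lemma~\ref{lem:rg_typical}(ii) and the small-set bound of Lemma~\ref{lem:rg_typical}(iii) apply verbatim to subsets of $A_T$ and $B_T$. By normality, $|A_T|=|B_T|=n_T=\big(\tfrac{\varepsilon}{4}\pm\xi_T\big)n_0$ (in particular $n_T$ is linear in $n$), and every vertex of $H_T$ has degree at least $\big(\tfrac{\varepsilon}{4}-\xi_T\big)n_0 q\ge\tfrac{\varepsilon}{8}n_0 q$, using \eqref{eq:error_param_bound}. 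I also use that the parameter $\xi$ with which Lemma~\ref{lem:rg_typical} is applied may be taken smaller than $\tfrac{\varepsilon}{8}$.

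Suppose for contradiction that $H_T$ has no perfect matching. Then by Hall's theorem there is a set $S\subseteq A_T$ with $|N_{H_T}(S)|<|S|$; put $Z=B_T\setminus N_{H_T}(S)$, so $e_{H_T}(S,Z)=0$ and $|Z|\ge n_T-|S|+1$. Since $e_H(S,Z)=|S|\,|Z|\,q+o(n^2q)$ by Lemma~\ref{lem:rg_typical}(ii), the identity $e_H(S,Z)=0$ forces $|S|\,|Z|=o(n^2)$, hence $|S|\big(n_T-|S|\big)\le|S|\,|Z|=o(n^2)$, and therefore, since $n_T$ is linear in $n$, $\min\big(|S|,\,n_T-|S|\big)=o(n)$. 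In particular, for all large $n$, either $|S|\le e^{-2}\xi n_0$ or $n_T-|S|+1\le e^{-2}\xi n_0$. This is precisely the step where pseudorandomness does the work: a Hall violation is forced to occur at a set that is small enough — on one of the two sides — for Lemma~\ref{lem:rg_typical}(iii) to apply.

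In the first case I choose $B'\subseteq B_T$ with $N_{H_T}(S)\subseteq B'$ and $|B'|=|S|\le e^{-2}\xi n_0$, and combine the minimum-degree bound with Lemma~\ref{lem:rg_typical}(iii):
\[
	|S|\cdot\tfrac{\varepsilon}{8}n_0 q\ \le\ \sum_{v\in S}d_{H_T}(v)\ =\ e_H\big(S,N_{H_T}(S)\big)\ \le\ e_H(S,B')\ \le\ |S|\cdot\xi n_0 q,
\]
which contradicts $\xi<\tfrac{\varepsilon}{8}$. In the second case I set $U=A_T\setminus S$; every vertex of $Z$ has all its $H_T$-neighbours inside $U$, so $|Z|\ge|U|+1$, and I pick $Z'\subseteq Z$ with $|Z'|=|U|+1\le e^{-2}\xi n_0$ together with $A'\subseteq A_T$ satisfying $U\subseteq A'$ and $|A'|=|Z'|$. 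Running the same chain of inequalities with the two sides interchanged (using that the bound in Lemma~\ref{lem:rg_typical}(iii) is symmetric under swapping $A'$ and $B'$) gives $|Z'|\cdot\tfrac{\varepsilon}{8}n_0 q\le e_H(Z',A')\le|Z'|\cdot\xi n_0 q$, again a contradiction. Hence $H_T$ satisfies Hall's condition and has a perfect matching.

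I expect the only genuinely delicate points to be organizational: checking that the error term $o(n^2q)$ in Lemma~\ref{lem:rg_typical}(ii) is negligible against $|S|(n_T-|S|)q$ throughout the whole bulk of the range of $|S|$ (which is exactly why a violation can occur only very close to one of the two extremes), fixing $\xi$ sufficiently small in terms of $\varepsilon$ at the moment Lemma~\ref{lem:rg_typical} is invoked, and transferring the argument to odd $n$, where $|A_T|=n_T+1$ and one first fixes the length-two connector path through $v_*$ and then matches the remaining balanced bipartite graph by the argument above.
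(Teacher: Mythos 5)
Your proof is correct and works with the same ingredients as the paper's---Hall's theorem, the degree lower bound from normality, and parts (ii) and (iii) of Lemma~\ref{lem:rg_typical}---but it reverses the order in which (ii) and (iii) are brought in. The paper uses the half-Hall reduction (it suffices to check Hall separately on each side for sets of size at most $n_T/2$), then for a violator $A'$ first applies Lemma~\ref{lem:rg_typical}(iii) to conclude $|A'|>\tfrac{\varepsilon}{8e^2}n_0$, and finally derives a contradiction from Lemma~\ref{lem:rg_typical}(ii). You instead take an arbitrary Hall violator $S$, apply (ii) first to deduce $\min(|S|,n_T-|S|)=o(n)$, and then close with (iii) on whichever side is small. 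Your version avoids the half-Hall reduction at the cost of a symmetric case split, and is an equally clean way to organize the argument. One cosmetic slip: the clause ``every vertex of $Z$ has all its $H_T$-neighbours inside $U$, so $|Z|\ge|U|+1$'' reads as if the first fact implies the second, whereas $|Z|\ge|U|+1$ actually follows from the Hall violation $|N_{H_T}(S)|\le|S|-1$; the neighbour containment is what you need for the subsequent degree/edge count, not for that inequality. You also correctly flag the need to fix $\xi<\tfrac{\varepsilon}{8}$ when invoking Lemma~\ref{lem:rg_typical}, and use~\eqref{eq:error_param_bound} to get $d_{H_T}(v)\ge\tfrac{\varepsilon}{8}n_0q$, matching the paper.
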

\begin{proof}
If $n$ is odd, then we must first choose the vertices
$v_*, a_{(n-1)/2}$ from $A_T$ and $b_{(n-1)/2}$ from $B_T$. Property (iii) of
normality implies the existence of an edge $\{v_*, a_{(n-1)/2}\} \in E(G)$ within $A_T$.
Then by Properties (ii) and (iv) of normality, we can find a vertex
$ b_{(n-1)/2} \in B_T$ for which $\{v_*, a_{(n-1)/2}\}$ and
$\{v_*, b_{(n-1)/2}\} \in E(H_T)$ are compatible.
Remove the vertices $v_*, a_{(n-1)/2}$ from $A_T$ and $b_{(n-1)/2}$ from $B_T$
and update $n_T$ as the size of the new sets $A_T$ and $B_T$.

Now consider both even and odd $n$, and
consider the graph $H_{T}$ with bipartition $A_{T} \cup B_{T}$.
By Hall's theorem it suffices to prove that
\[
	\forall A' \subseteq A_T, |A'| \le \frac{1}{2}n_T \quad |N_{H_T}(A')| \ge |A'|
	\quad \text{and} \quad
	\forall B' \subseteq B_T, |B'| \le \frac{1}{2}n_T \quad |N_{H_T}(B')| \ge |B'|.
\]
Suppose that there exists a set $A' \subseteq A_T$ for which
$|N_{H_T}(A')| < |A'|$, and let $X$ be a superset of $N_{H_T}(A')$ of size exactly
$|A'|$. Since $H_T$ is normal, by Property (ii) of normality, we see that
\[
	e_{H_T}(A', X) = e_{H_T}(A', B_T) \ge |A'|((1-\delta)^T - \xi_i) n_0q \ge |A'| \frac{\varepsilon}{8} n_0 q.
\]
By Lemma \ref{lem:rg_typical} (iii), this implies $|A'| > \frac{\varepsilon}{8e^2} n_0$.
Let $Y=B_T-X$. Then $|Y|\geq \frac{1}{2}n_T$ and by definition $e_{H_T}(A', Y)=0$. On the other hand,
by Lemma \ref{lem:rg_typical} (ii), $e_{H_T}(A', Y) = |A'||Y| q + o(n^2q)>0$.
This contradiction proves that no such set $A'$ can exist. Similarly, we can show that no such set $B'$ exists.
 \end{proof}

We conclude this section with the proof of the second part of Lemma \ref{lem:gnp_r_t_strong}.

\begin{proof}[Proof of Lemma \ref{lem:gnp_r_t_strong} (ii)]
Let $\mu = 1 - \frac{1}{\sqrt{2}} - 2\varepsilon$ and
condition on $G = G(n,p)$ satisfying Lemmas \ref{lem:rg_typical_0} and \ref{lem:rg_typical}.
Since $G(n,p)$ satisfies these lemmas
with probability $1-o(1)$, it suffices to prove that for every
$\mu np$-bounded incompatibility system $\mathcal{F}$ over $G$,
the probability that $(G,\Phi(G), \mathcal{F})$ is $\varepsilon$-typical is $1-o(1)$.
This will be achieved by proving that $(G, \Phi(G), \mathcal{F})$ is $\varepsilon$-typical
if $H$ satisfies Lemma \ref{lem:rg_typical} and
all $H_i$ are normal (note that all events have probability $1-o(1)$).

Since $H_T$ is normal, Lemma \ref{lem:final_pm} implies
that our algorithm produces a perfect matching $M = \Phi(G)$.
By Lemma \ref{lem:rg_typical} (i), each vertex in $D_G(M)$ has in- and out-degrees $(1+o(1))n_0 p$.
Take a vertex $e = \{a,b\} \in D_G(M)$ .
An out-neighbor $f = \{a',b'\}$ of $e$ in $D_G(M)$
can be removed in $D_G(M;\mathcal{F})$ for two reasons: first, if
$\{b,a'\}$ is an edge incompatible with $e$, and second, if
$\{b,a'\}$ is an edge compatible with $e$ but incompatible with $f$ (i.e. $f$ is a $B$-bad edge for $e$).
By the definition of the set $B_e^{(G)}$, we see that
there are at most $(1+o(1))n_0 p - |B_e^{(G)}|$ edges of the first type.
Since all $H_i$ are normal, by conditions (iii) and (v) of normality,
the number of edges of the second type is at most
\begin{align*}
	 |N_{G}(a) \cap B_T| + \sum_{i=0}^{T-1} \Big(\mu + \frac{\varepsilon}{3}\Big) \delta (1-\delta)^{i} |B_e^{(G)}|
	 &\le
	 ((1-\delta)^T + \xi_T)n_0 p + \Big(\mu + \frac{\varepsilon}{3}\Big) |B_e^{(G)}|.
\end{align*}
Therefore, since $(1-\delta)^{T} = \frac{\varepsilon}{4}$,
the total number of out-edges removed from $e$ is at most
\[
	\Big((1+o(1))n_0 p - |B_e^{(G)}|\Big) + \frac{\varepsilon n_0 p}{3} + \Big(\mu + \frac{\varepsilon}{3}\Big) |B_e^{(G)}|,
\]
which by $|B_e^{(G)}| \ge (1-\mu -o(1))n_0 p$ and $\mu = 1 - \frac{1}{\sqrt{2}} - 2\varepsilon$ is at most
\begin{align*}
	& (1+o(1))n_0 p + \frac{\varepsilon n_0 p}{3} + \Big(-1 + \mu + \frac{\varepsilon}{3}\Big)(1 - \mu - o(1))n_0 p
	\\
	\le\,&
	\left(1 + \frac{\varepsilon}{3} - (1 - \mu)^2 + \frac{\varepsilon}{3}(1 - \mu) + o(1) \right)n_0 p
	\le
	\left(\frac{1}{2} - 2\varepsilon \right)n_0 p.
\end{align*}
Hence the minimum out-degree of $D_G(M;\mathcal{F})$ is at least
$(1+o(1))n_0 p - (\frac{1}{2} - 2\varepsilon)n_0 p \ge (\frac{1}{2} + \varepsilon) n_0 p$.
A similar bound on the minimum in-degree of $D_G(M;\mathcal{F})$ holds.
\end{proof}

\section{Concluding remarks}

In this paper, we proved the existence of a positive real $\mu$
such that if $p \gg \frac{\log n}{n}$, then
$G=G(n,p)$ a.a.s.~has the following property. For every $\mu np$-bounded
incompatibility system $\mathcal{F}$ defined over $G$, there
exists a Hamilton cycle in $G$ compatible with $\mathcal{F}$.
The value of $\mu$ that we obtained in Theorem~\ref{thm:sparse_p}
is very small, but for $p \gg \frac{\log^8 n}{n}$ we improved it in Theorem \ref{thm:dense_p}
to $1 - \frac{1}{\sqrt{2}} - o(1)$ (roughly 0.29). The bound of
$\frac{\log^8 n}{n}$ came from Theorem \ref{thm:di_resilience}, and in fact,
any improvement in the range of probability for Theorem \ref{thm:di_resilience}
will immediately imply Theorem \ref{thm:dense_p} for the extended range of probabilities.
It is not clear what the best possible value of $\mu$ should be. 
The example of Bollob\'as and Erd\H{o}s \cite{BoEr76} of a
$\lfloor \frac{1}{2}n \rfloor$-bounded edge-coloring of $K_n$ with no
properly colored Hamilton cycles implies that the optimal value of
$\mu$ is at most $\frac{1}{2}$, since it provides an upper bound for the case $p=1$.

\medskip

The concept of incompatibility systems seems to give an interesting
new take on robustness of graphs properties. Further study of how various extremal results
can be strengthened using this notion appears to be a promising direction of research.
For example in a companion paper \cite{KLS-Dirac}, we show that there exists a constant $\mu>0$ such that for
any $\mu n$-bounded system $\mathcal{F}$ over a graph $G$ on $n$ vertices with minimum degree at least $n/2$, there is a compatible
Hamilton cycle in $G$. This establishes in a very strong sense an old conjecture of
H\"aggkvist  from 1988.

\vspace{0.4cm}
\noindent
{\bf Acknowledgments.} 
We thank Asaf Ferber for calling our attention to the problem of rainbow Hamilton cycles in random graphs.
A major part of this work was carried out when Benny Sudakov was visiting Tel Aviv University, Israel.
He would like to thank the School of Mathematical Sciences of Tel Aviv University for hospitality and for creating a stimulating research
environment.

\end{document}